	\tikzstyle{edge}=[line width=.75pt]
	\tikzstyle{fnode}=[fill=black,draw=black,circle,scale=\s]
	\tikzstyle{pathnode}=[inner sep=.9pt]
\newtheorem{theorem}{Theorem}[section]
\newtheorem{proposition}[theorem]{Proposition}
\newtheorem{lemma}[theorem]{Lemma}
\newtheorem{corollary}[theorem]{Corollary}
\theoremstyle{remark}
\newtheorem{example}[theorem]{Example}
\newtheorem{remark}[theorem]{Remark}
\newtheorem{question}[theorem]{Question}
\newcommand{\defn}[1]{{\color{green!50!black}\emph{#1}}}
\newcommand{\defs}{\stackrel{\mathsf{def}}{=}}
\newcommand{\ie}{\text{i.e.}\;}
\newcommand{\SC}{\Delta}
\newcommand{\sphere}{\mathcal{S}}
\newcommand{\ball}{\mathcal{B}}
\renewcommand{\dim}{\mathsf{dim}}
\newcommand{\link}[2]{\mathsf{lk}_{#2}{#1}}
\newcommand{\interior}{\mathsf{int}}
\newcommand{\Z}{\mathbb{Z}}
\newcommand{\N}{\mathbb{N}}
\newcommand{\x}{\mathbf{x}}
\newcommand{\bolda}{\mathbf{a}}
\newcommand{\boldb}{\mathbf{b}}
\newcommand{\boldc}{\mathbf{c}}
\newcommand{\fieldk}{\mathbbm{k}} 
\newcommand{\SRring}{\fieldk[\SC]} 
\newcommand{\boldlambda}{\bm{\lambda}}
\newcommand{\boldomega}{\bm{\omega}}
\title[Dehn--Sommerville Relations]{Revisiting Generalizations of the Dehn--Sommerville Relations}
\author{Cesar Ceballos}
\address{CC: TU Graz, Institut f\"ur Geometrie, Kopernikusgasse 24, 8010 Graz, Austria.}
\email{cesar.ceballos@tugraz.at}
\author{Henri M{\"u}hle}
\address{HM: TU Dresden, Institut f{\"u}r Algebra, Zellescher Weg 12--14, 01069 Dresden, Germany.}
\email{henri.muehle@tu-dresden.de}
\thanks{Cesar Ceballos was supported by the Austrian Science Foundation FWF, grant P 33278.  Henri M{\"u}hle has received funding from the European Research Council (Grant Agreement no. 681988, CSP-Infinity.)}
\keywords{Dehn--Sommerville relations, homology manifolds, semi-Eulerian complex, reciprocal complex, balanced complex.}
\subjclass[2010]{05E45, 52B05}
\begin{document}

\allowdisplaybreaks

\begin{abstract}
We revisit several known versions of the Dehn--Sommerville relations in the context of:
\begin{itemize}
\item homology manifolds;
\item semi-Eulerian complexes;
\item general simplicial complexes;
\item balanced semi-Eulerian complexes; and
\item general completely balanced complexes.
\end{itemize}
In addition, we present Dehn--Sommerville relations for
\begin{itemize}
\item reciprocal complexes; and
\item general balanced simplicial complexes;
\end{itemize}
which slightly generalize some of the previous results.

Our proofs are uniform, and are based on two simple evaluations of the $\tilde h$-polynomial: 
one that recovers the $\tilde f$-polynomial, and one that counts faces according to certain multiplicities.    
\end{abstract}

\maketitle

\section{Introduction}
	\label{sec:introduction}
The Dehn--Sommerville relations are historically one of the key stones in the study of $f$-vectors of polytopes, simplicial complexes and manifolds in general. They constitute a system of linear relations that the $f$-numbers satisfy, which generalize the well known Euler-Poincar\'e formula for simplicial polytopes (or spheres):
\[
f_0-f_1+f_2-f_3+\dots + (-1)^{d-1}f_{d-1} = (-1)^{d-1} + 1
\] 	
In their simpler form, for instance for simplicial polytopes or simplicial spheres, the Dehn--Sommerville relations can be stated as:
\[
 f_{k-1} = \sum_{i=k}^d (-1)^{d-i} {i \choose k} f_{i-1},
\]
where $f_j$ counts the number of $j$-dimensional faces of the complex. 
Equivalently,
\[
h_{d-i} = h_i
\]
where $h_0,h_1,\dots,h_d$ are the so called $h$-numbers.

In his seminal work from 1964, Klee proved a version of the Dehn--Sommerville relations for a more general class of simplicial complexes called semi-Eulerian complexes~\cite{klee_DehnSommervilleRelations_1964}.
Since then, several other versions have been rediscovered by many authors over and over again.  
The following list includes some generalizations of the Dehn--Sommerville relations for:
\begin{itemize}
\item semi-Eulerian complexes; Klee~\cite{klee_DehnSommervilleRelations_1964}.
\item homology manifolds; Macdonald~\cite{Macdonald_DehnSommerville_1971} (see Remark~\ref{rem_Macdonald} and Appendix~\ref{appendix_Macdonald}), Gr\"abe~\cite{grabe_DehnSommerville_1987}, Chen-Yan~\cite{ChenYan_DehnSomerville_1997}, Novik--Swartz~\cite{novik_applications_2009}.  
\item completely balanced spheres (or more generally, completely balanced Eulerian complexes); Bayer--Billera~\cite{bayer_generalized_1985}.
\item balanced semi-Eulerian complexes; Swartz~\cite{swartz_face_enumeration_2009}.
\item general simplicial complexes; Sawaske--Xue~\cite{sawaske_nonEulerianDehnSommerville_2021}.
\item general completely balanced complexes; Sawaske--Xue~\cite{sawaske_nonEulerianDehnSommerville_2021}.
\end{itemize}

In this paper, we present uniform proofs for all these results.
Our key ingredients are two elementary evaluations of the $\tilde h$-polynomial $\tilde h(x)= \sum_{i=0}^dh_ix^i$, which are stated in Theorem~\ref{thm_fh_tilde} and Theorem~\ref{thm_fh_reciprocity}. The first recovers the $\tilde f$-polynomial 
$\tilde f(x)=\sum_{i=0}^df_{i-1}x^i$,
while the second counts faces of the complex with certain multiplicities that depend on the reduced Euler characteristic of their links. 
From these two theorems one deduces a single polynomial relation; 
comparing the coefficients on both sides of this relation determines the Dehn--Sommerville relations.   

This allows us to extend the previous results for (1) a new family of simplicial complexes that we call reciprocal complexes, and for (2) general balanced simplicial complexes, extending results by Swartz~\cite{swartz_face_enumeration_2009} and Sawaske-Xue~\cite{sawaske_nonEulerianDehnSommerville_2021}. 

Finally, we relate Theorem~\ref{thm_fh_reciprocity}, which we call ``the $f=h$ reciprocity'', with a reciprocity result of Stanley about the Hilbert series of the Stanley--Reisner ring (Appendix~\ref{app_StanleyReisner}). This last result is a common tool in several of the existing proofs of the Dehn--Sommerville relations. We highlight that our proofs do no require any background about Stanley--Reisner rings.  

We would also like to mention that there are further generalizations of the Dehn--Sommerville relations which are not discussed in this manuscript. For instance, the Dehn--Sommerville relations for 
Eulerian posets by Bayer and Billera~\cite{bayer_generalized_1985}, 
completely balanced semi-Eulerian posets by Stanley~\cite{stanley_some_1982}, or
relative simplicial complexes in the weakly Eulerian case by Adiprasito and Sanyal~\cite{adiprasito_relative_2016}.  
The reader is invited to use the techniques presented in this paper to prove these results as well.

\section{The $f{=}h$ reciprocity for simplicial complexes}
	\label{sec_fh_reciprocity_DehnSommerville}

A \defn{simplicial complex} $\SC$ is a collection of sets (called faces) that is closed under taking subsets. The union of all these sets is called the \defn{vertex set} of the complex. The \defn{dimension}~$\dim(F)$ of $F\in \SC$ is $|F|-1$, and the \defn{dimension} $\dim(\SC)$ is the largest dimension of a face $F$ in~$\SC$. The \defn{link} of a face $F\in \SC$ is the set $\link{F}{\SC}$ of faces $G\in\SC$ such that $G \cap F =\emptyset$ and~$G\cup F\in \SC$.

From now on we consider a finite simplicial complex $\SC$ of dimension $d-1$ whose vertex set is $\{1,2,\dots , n\}$.
The \defn{$f$-vector} of $\SC$ is defined as 
\begin{displaymath}
	f(\SC)\defs(f_{-1},f_0,\dots, f_{d-1}),
\end{displaymath}
where $f_i$ is equal to the number of $i$-dimensional faces in~$\SC$ for $0\leq i \leq d-1$, and $f_{-1}\defs 1$ represents the empty face. 
The reduced Euler characteristic and Euler characteristic of $\SC$ are defined respectively by:  
\begin{align*}
	\widetilde \chi (\SC) \defs \sum_{i=0}^d (-1)^{i-1} f_{i-1}, &&
	\chi (\SC) \defs \sum_{i=1}^d (-1)^{i-1} f_{i-1}.
\end{align*}
In particular, $\widetilde \chi (\SC)=\chi (\SC)-1$, where the term $-1$ accounts for the empty face.
The \defn{$h$-vector} of $\SC$ is the vector 
\begin{align*}
	h(\SC)\defs(h_{0},h_1,\dots, h_{d}),
\end{align*}
which is defined in terms of the $f$-vector via the relation 
\begin{equation}\label{relation_fh_vectors}
	\sum_{i=0}^{d} h_i x^i
	=
	\sum_{i=0}^{d} f_{i-1} x^i(1-x)^{d-i}.  
\end{equation}

The study of $h$-vectors has historically proven to be a powerful tool in the study of $f$-vectors of polytopes, simplicial complexes and manifolds in general. Important results about their face numbers (e.g. the Dehn--Sommerville relations) can be elegantly formulated and/or proven using $h$-vectors instead.      

We define the \defn{$\tilde f$-polynomial} and the \defn{$\tilde h$-polynomial} of $\SC$ as:
\begin{align}\label{ftilde_htilde_polynomial}
	\tilde f(x) \defs \sum_{i=0}^d f_{i-1}x^{i}, &&
	\tilde h(x) \defs \sum_{i=0}^d h_{i}x^{i}.
\end{align}

The reason why we use a tilde in our notation is because there are commonly used notions of $f$-polynomials and $h$-polynomials in the literature, which are defined by
\begin{align}
	f(x) \defs \sum_{i=0}^d f_{i-1}x^{d-i}, &&
	h(x) \defs \sum_{i=0}^d h_{i}x^{d-i}.
\end{align}
The relation~\eqref{relation_fh_vectors} between the $f$- and the $h$-vector is equivalent to the simpler formulation 
\begin{align}\label{eq_fh}
	f(x)=h(x+1).
\end{align}
 Our definitions of the $\tilde f$- and $\tilde h$-polynomial reverse the coefficients of the $f$- and $h$-polynomial.
These choices are more convenient for our purposes. 

In particular, the various generalizations of the Dehn--Sommerville relations presented in this paper are simple consequences of the following two basic results. 
The first is an evaluation of the $\tilde h$-polynomial that recovers the $\tilde f$-polynomial, while the second is another evaluation that counts faces with certain multiplicities. 

\begin{theorem}\label{thm_fh_tilde}
Let $\SC$ be an abstract simplicial complex of dimension $d-1$
with $\tilde f$-polynomial  $\tilde f(x)$ and $\tilde h$-polynomial  $\tilde h(x)$. 
The following relation holds:
\begin{align}\label{eq_fh_tilde}
	 (x+1)^d \tilde h\left(\frac{x}{x+1}\right) = \tilde f(x).
\end{align}
\end{theorem}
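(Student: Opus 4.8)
The plan is to unwind the definitions and reduce the claimed identity \eqref{eq_fh_tilde} to the defining relation \eqref{relation_fh_vectors} between the $f$- and $h$-vectors. Starting from the left-hand side, I would substitute $x\mapsto \tfrac{x}{x+1}$ into $\tilde h(x)=\sum_{i=0}^d h_i x^i$ and multiply by $(x+1)^d$, obtaining
\begin{align*}
	(x+1)^d\tilde h\!\left(\tfrac{x}{x+1}\right)
	= \sum_{i=0}^d h_i\, x^i (x+1)^{d-i}.
\end{align*}
So the statement is equivalent to $\sum_{i=0}^d h_i x^i(x+1)^{d-i}=\sum_{i=0}^d f_{i-1}x^i=\tilde f(x)$.

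The natural route to this is to apply the substitution $x\mapsto \tfrac{x}{x+1}$ directly inside \eqref{relation_fh_vectors}. Writing $y=\tfrac{x}{x+1}$, note the two elementary identities $1-y = \tfrac{1}{x+1}$ and $\tfrac{y}{1-y}=x$. Plugging $y$ into \eqref{relation_fh_vectors} gives
\begin{align*}
	\sum_{i=0}^d h_i y^i
	= \sum_{i=0}^d f_{i-1}\, y^i(1-y)^{d-i},
\end{align*}
and multiplying both sides by $(x+1)^d=(1-y)^{-d}$ turns the left side into $\sum_i h_i x^i(x+1)^{d-i}$ (using $y^i(1-y)^{-d} = x^i (1-y)^{i-d} = x^i(x+1)^{d-i}$) and the right side into $\sum_i f_{i-1} y^i (1-y)^{-i} = \sum_i f_{i-1} x^i = \tilde f(x)$. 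This is exactly the desired identity.

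Alternatively, one could phrase the whole thing through the ordinary (non-tilde) $f$- and $h$-polynomials, where \eqref{relation_fh_vectors} reads $f(x)=h(x+1)$ as in \eqref{eq_fh}, and observe that the tilde-versions are obtained by the reversal $\tilde f(x)=x^d f(1/x)$ and $\tilde h(x)=x^d h(1/x)$; then \eqref{eq_fh_tilde} is just a rewriting of $f(x)=h(x+1)$ after the change of variable. I would likely present the first, self-contained computation, since it only uses \eqref{relation_fh_vectors} and avoids introducing the reversal bookkeeping.

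I do not anticipate a serious obstacle here: this is a formal manipulation of polynomial identities, and the only point requiring a little care is that the substitution $x\mapsto\tfrac{x}{x+1}$ is a substitution of rational functions, so one should either clear denominators immediately (as above, by multiplying through by $(x+1)^d$, which makes everything a genuine polynomial identity) or argue that two rational functions agreeing as formal expressions agree as polynomials after the stated clearing of denominators. Verifying the exponent bookkeeping $y^i(1-y)^{d-i}\cdot(1-y)^{-d}=x^i(x+1)^{d-i}$ is the one routine step to check carefully.
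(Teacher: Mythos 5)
Your proposal is correct and follows essentially the same route as the paper's proof: substituting $x\mapsto \tfrac{x}{x+1}$ into \eqref{relation_fh_vectors} and clearing denominators with $(x+1)^d$, with the same exponent bookkeeping. No gaps to report.
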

\begin{proof}
Replacing $x$ by $\frac{x}{x+1}$ in~\eqref{relation_fh_vectors}, and multiplying the result by $(x+1)^d$ yields
\begin{align*}
(x+1)^d \tilde h\left(\frac{x}{x+1}\right) 
= (x+1)^d \sum_{i=0}^{d} f_{i-1} \left(\frac{x}{x+1}\right)^i\left( \frac{1}{x+1}\right)^{d-i}
= \sum_{i=0}^{d} f_{i-1} x^i
= \tilde f(x),
\end{align*}
as desired.
\end{proof}

\begin{theorem}[The $f{=}h$ reciprocity for simplicial complexes]
\label{thm_fh_reciprocity}
Let $\SC$ be an abstract simplicial complex of dimension $d-1$ with $\tilde h$-polynomial  $\tilde h(x)$. Then, 
\begin{equation}\label{eq_fh_reciprocity}
 x^d \tilde h\left(\frac{x+1}{x}\right) =
  \sum_{F\in \SC} m_F x^{|F|},  
\end{equation}
where 
\begin{equation}\label{eq_multiplicity}
	m_F \defs \sum_{F\subseteq G \in \SC} (-1)^{d-|G|}  = (-1)^{d-1-|F|}\widetilde \chi\bigl(\link{F}{\SC}\bigr).
\end{equation}
That is, the evaluation $x^d \tilde h(\frac{x+1}{x})$ enumerates the faces of the complex with multiplicity~$m_F$, which is given by a signed reduced Euler characteristic of their links. In particular, the multiplicity of the empty face is
\begin{equation}\label{eq_multiplicity_empty}
	m_\emptyset = (-1)^{d-1}\widetilde \chi\bigl(\SC\bigr).
\end{equation}
 
\end{theorem}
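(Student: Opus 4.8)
The plan is to start from the definition of the $\tilde h$-polynomial via~\eqref{relation_fh_vectors}, substitute $x \mapsto \frac{x+1}{x}$, and clear denominators by multiplying through by $x^d$. Concretely, since $\sum_i h_i x^i = \sum_i f_{i-1} x^i (1-x)^{d-i}$, replacing $x$ by $\frac{x+1}{x}$ gives $\tilde h\!\left(\frac{x+1}{x}\right) = \sum_i f_{i-1}\left(\frac{x+1}{x}\right)^i\left(1-\frac{x+1}{x}\right)^{d-i}$. Observing that $1 - \frac{x+1}{x} = \frac{-1}{x}$, the factor $\left(\frac{-1}{x}\right)^{d-i}$ appears, so after multiplying by $x^d$ one obtains $x^d \tilde h\!\left(\frac{x+1}{x}\right) = \sum_{i=0}^d f_{i-1} (x+1)^i (-1)^{d-i}$. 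This is the first key step and it is entirely routine.

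Next I would expand $(x+1)^i = \sum_{j=0}^i \binom{i}{j} x^j$ and reorganize the double sum by collecting the coefficient of $x^j$. Since $f_{i-1}$ counts $(i-1)$-dimensional faces, i.e. faces $G$ with $|G| = i$, one can rewrite $\sum_{i} f_{i-1}(-1)^{d-i}(x+1)^i$ as $\sum_{G \in \SC} (-1)^{d-|G|}(x+1)^{|G|}$. Then expanding $(x+1)^{|G|} = \sum_{F \subseteq G} x^{|F|}$ (the standard bijection between monomials in the binomial expansion and subsets $F$ of $G$), and swapping the order of summation to sum first over $F$ and then over $G \supseteq F$, yields $\sum_{F \in \SC} \left(\sum_{F \subseteq G \in \SC} (-1)^{d-|G|}\right) x^{|F|} = \sum_{F \in \SC} m_F x^{|F|}$, which is the right-hand side of~\eqref{eq_fh_reciprocity} with $m_F$ as in the first equality of~\eqref{eq_multiplicity}.

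It then remains to verify the second equality in~\eqref{eq_multiplicity}, namely $\sum_{F \subseteq G \in \SC}(-1)^{d-|G|} = (-1)^{d-1-|F|}\widetilde\chi\bigl(\link{F}{\SC}\bigr)$. For this I would use the standard order-preserving bijection $G \mapsto G \setminus F$ between faces $G \in \SC$ containing $F$ and faces of the link $\link{F}{\SC}$, under which $|G| = |F| + |G\setminus F|$, so $|G\setminus F|$ ranges over $0,1,\dots$ as $G\setminus F$ runs over all faces of the link including the empty face. Writing $|G\setminus F| = \ell$ and recalling $\widetilde\chi(\link{F}{\SC}) = \sum_{\ell \geq 0}(-1)^{\ell-1} f_{\ell-1}(\link{F}{\SC})$ (with the $\ell = 0$ term accounting for the empty face), a sign bookkeeping computation $(-1)^{d-|G|} = (-1)^{d-|F|-\ell} = (-1)^{d-1-|F|}\cdot(-1)^{\ell-1}$ gives the claimed identity. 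Finally, the special case~\eqref{eq_multiplicity_empty} follows by setting $F = \emptyset$, using $\link{\emptyset}{\SC} = \SC$ and $|\emptyset| = 0$.

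The only mild subtlety — and the closest thing to an obstacle — is keeping the indexing and the sign conventions consistent: the shift between $\dim$ and cardinality, the role of $f_{-1} = 1$ and the empty face in the definition of $\widetilde\chi$, and making sure the summation bounds on $i$ (running $0$ to $d$) correctly reflect that $\SC$ has dimension $d-1$. None of this is deep; once the substitution $1 - \frac{x+1}{x} = -\frac{1}{x}$ is in hand, everything else is careful bookkeeping of binomial expansions and signs.
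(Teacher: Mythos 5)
Your proposal is correct and follows essentially the same route as the paper's proof: the same substitution giving $x^d\,\tilde h\bigl(\tfrac{x+1}{x}\bigr)=\sum_{i=0}^d f_{i-1}(-1)^{d-i}(x+1)^i$, the same regrouping via $(x+1)^{|G|}=\sum_{F\subseteq G}x^{|F|}$ and an exchange of summation (the paper merely packages this step in an auxiliary multivariate polynomial before setting all variables equal to $x$), and the same sign bookkeeping identifying $m_F$ with $(-1)^{d-1-|F|}\widetilde\chi\bigl(\link{F}{\SC}\bigr)$. No gaps.
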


\begin{proof}
From a direct substitution in~\eqref{relation_fh_vectors} we get:
\begin{equation*}\label{eq_hf_substitution}
 x^d \tilde h\left(\frac{x+1}{x}\right) = 
 \sum_{i=0}^d f_{i-1} (x+1)^i (-1)^{d-i}.
\end{equation*}
Let $\mathbf{x}_F=\prod_{i\in F} x_i$, and define the auxiliary multivariate function 
\begin{align*}
\mathbf{\tilde f}(x_1,\dots,x_n) &\defs \sum_{G\in \SC} \left( \sum_{F\subseteq G} \mathbf{x}_F \right) (-1)^{d-|G|}\\
&= \sum_{F\in \SC} \mathbf{x}_F \left( \sum_{F\subseteq G\in \SC} (-1)^{d-|G|} \right)
\end{align*}
Equation~\eqref{eq_hf_substitution} can then be expressed as
\begin{align*}
x^d \tilde h\left(\frac{x+1}{x}\right) = \mathbf{\tilde f}(x,\dots,x) 
= \sum_{F\in \SC} m_F x^{|F|}, 
\end{align*}
for $m_F = \sum_{F\subseteq G \in \SC} (-1)^{d-|G|}$ as desired. 
Moreover, $$d-|G|= \bigl(|G|-|F|-1\bigr)+ \bigl(d-1-|F|\bigr)+2\bigl(|F|-|G|+1\bigr)$$
and so, we have:
\begin{align*}
m_F &=  \sum_{F\subseteq G \in \SC} (-1)^{d-|G|}\\
& = \sum_{F\subseteq G \in \SC} (-1)^{|G|-|F|-1}  (-1)^{d-1-|F|}  \\
& = (-1)^{d-1-|F|} \widetilde \chi\bigl(\link{F}{\SC}\bigr).
\end{align*}
This proves~\eqref{eq_multiplicity}. 

Finally, for $F=\emptyset$ we have $\link{\emptyset}{\SC}=\SC$ and $m_\emptyset = (-1)^{d-1}\widetilde \chi\bigl(\SC\bigr)$. 
This proves~\eqref{eq_multiplicity_empty}. 
\end{proof}

Examples of Theorem~\ref{thm_fh_reciprocity} are presented in Section~\ref{sec_examples} and Figure~\ref{fig_examples_fh_reciprocity}.

\begin{remark}
So far, we were not able to find an explicit reference for Theorem~\ref{thm_fh_reciprocity} in the literature, at least in this basic form.
However, we will show in Appendix~\ref{app_StanleyReisner} that Theorem~\ref{thm_fh_reciprocity} (and its multi-variate generalization in Theorem~\ref{thm_fh_reciprocity_multi}) is equivalent to a specialization of a reciprocity result by Stanley~\cite[Theorem~7.1]{stanley_combinatorics_algebra_book_1996} about the Hilbert series of the Stanley--Reisner ring. 
\end{remark}

\section{Homology manifolds}\label{sec_homology}
Theorem~\ref{thm_fh_reciprocity} has nice implications in the case of homology manifolds. 
As we will see shortly, the multiplicity~$m_F$ of a non-empty face in a homology manifold is equal to zero when $F$ is a boundary face and to one otherwise. 
The face enumeration of homology manifolds is studied comprehensively by Swartz in~\cite{swartz_face_enumeration_2009}, and the definitions presented here follow his conventions, see~\cite[Section~2]{swartz_face_enumeration_2009}. 

Fix a field $\fieldk$. A simplicial complex $\SC$ is called a \defn{homology manifold} if for every non-empty face $F\in\SC$, 
the reduced simplicial homology $\widetilde H_i\bigl(\link{F}{\SC};\fieldk\bigr)$ vanishes if $i<d-1-|F|$ and is isomorphic to $\fieldk$ or $0$ if $i=d-1-|F|$.
In other words, the $\fieldk$-homology 
$H_\star\bigl(\link{F}{\SC};\fieldk\bigr)$ is isomorphic to either the $\fieldk$-homology of a sphere $\sphere^{d-1-|F|}$ or a ball $\ball^{d-1-|F|}$ (the exponent here denotes the dimension). 
A non-empty face $F\in \SC$ is called a \defn{boundary face} if 
$H_{d-1-|F|}\bigl(\link{F}{\SC};\fieldk \bigr)=0$, that is, if its link has the homology of a ball.
The \defn{boundary} $\partial \SC$ is defined as the sub-complex consisting of the empty face and all non-empty boundary faces.  
 The faces that are not contained in the boundary are called \defn{interior}, that is the non-empty faces whose links have the homology of a sphere.

\begin{proposition}\label{prop_hom_manifold_m_F}
Let $\SC$ be a homology manifold. For a non-empty face $F\in \SC$,
\[
m_F=
\begin{cases}
0 & \text{if } F\in \partial \SC, \\
1 & \text{otherwise.}
\end{cases}
\]
\end{proposition}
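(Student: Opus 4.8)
The plan is to read off $m_F$ directly from the second equality in~\eqref{eq_multiplicity}, namely $m_F = (-1)^{d-1-|F|}\widetilde\chi\bigl(\link{F}{\SC}\bigr)$, and to compute the reduced Euler characteristic of the link using the homology-manifold hypothesis. Fix a non-empty face $F\in\SC$ and set $k = d-1-|F|$, which is the dimension that the link $\link{F}{\SC}$ is supposed to ``behave like a $k$-sphere or $k$-ball'' in. The key classical input is that the reduced Euler characteristic can be computed from homology with coefficients in any field: $\widetilde\chi\bigl(\link{F}{\SC}\bigr) = \sum_{i} (-1)^i \dim_{\fieldk} \widetilde H_i\bigl(\link{F}{\SC};\fieldk\bigr)$. (This is the Euler--Poincar\'e formula applied to the chain complex of the link over $\fieldk$; I would either cite it or note that it follows from rank-nullity applied to the boundary maps.)

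Next I would invoke the defining property of a homology manifold: $\widetilde H_i\bigl(\link{F}{\SC};\fieldk\bigr) = 0$ for all $i < k$, and $\widetilde H_k\bigl(\link{F}{\SC};\fieldk\bigr)$ is either $0$ or $\fieldk$. There is one subtlety to address: the sum defining the reduced Euler characteristic ranges over all $i$, so I should also note that $\widetilde H_i = 0$ for $i > k$ as well. This holds because $\link{F}{\SC}$ has dimension at most $k = d-1-|F|$ (any face $G$ of the link satisfies $G\cup F\in\SC$ with $G\cap F=\emptyset$, so $|G| + |F| \le d$, hence $\dim(G) = |G|-1 \le d-1-|F| = k$), and a simplicial complex has no homology above its dimension. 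Therefore the only possibly nonzero term is the $i=k$ term, giving $\widetilde\chi\bigl(\link{F}{\SC}\bigr) = (-1)^k \dim_{\fieldk}\widetilde H_k\bigl(\link{F}{\SC};\fieldk\bigr)$, which is $0$ when the link has the $\fieldk$-homology of a ball (i.e.\ $F\in\partial\SC$ by definition) and $(-1)^k$ when it has the $\fieldk$-homology of a sphere (i.e.\ $F$ interior).

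Finally I would plug this back in: $m_F = (-1)^{d-1-|F|}\widetilde\chi\bigl(\link{F}{\SC}\bigr) = (-1)^{k}\cdot (-1)^{k}\dim_{\fieldk}\widetilde H_k = \dim_{\fieldk}\widetilde H_k\bigl(\link{F}{\SC};\fieldk\bigr)$, which is $0$ if $F\in\partial\SC$ and $1$ otherwise. This is exactly the claimed dichotomy.

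I do not anticipate a serious obstacle here; the statement is essentially a bookkeeping consequence of Theorem~\ref{thm_fh_reciprocity} combined with the definition of homology manifold. The only point requiring a little care is the vanishing of homology above degree $k$, which I handle via the dimension bound on the link noted above; everything else is the Euler--Poincar\'e formula and substitution of signs.
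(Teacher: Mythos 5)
Your proposal is correct and follows essentially the same route as the paper: apply $m_F = (-1)^{d-1-|F|}\widetilde\chi\bigl(\link{F}{\SC}\bigr)$ from Theorem~\ref{thm_fh_reciprocity} and evaluate the reduced Euler characteristic of the link, which is $0$ for a $\fieldk$-homology ball and $(-1)^{d-1-|F|}$ for a $\fieldk$-homology sphere. The only difference is that you make explicit the Euler--Poincar\'e computation over $\fieldk$ and the vanishing of homology above degree $d-1-|F|$ via the dimension bound on the link, details the paper compresses into the remark that the reduced Euler characteristic is a homology invariant.
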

\begin{proof}
Recall that the reduced Euler characteristic of a ball (regardless of its dimension) is equal to zero, and that the reduced Euler characteristic of an $\ell$-dimensional sphere is equal to $(-1)^\ell$. Since the reduced Euler characteristic is a homology invariant, we deduce that for a non-empty face $F$, 
\[
m_F = (-1)^{d-1-|F|}\widetilde \chi\bigl(\link{F}{\SC}\bigr)
\]
is equal to zero if $\link{F}{\SC}$ has the homology of a ball, and is equal to one if it has the homology of a sphere. These are precisely the definitions of boundary and interior faces, respectively. 
\end{proof}

We denote by $f_{i}^{\interior}$ the number of $i$-dimensional interior faces of $\SC$ for $i\in\{0,1,\ldots,d\}$, and define the \defn{$\tilde f^\interior$-polynomial} by
\begin{align}\label{eq_tilde_f_interior_polynomial}
	\tilde f^\interior (x) \defs \sum_{i=1}^d f_{i-1}^{\interior}x^i.
\end{align} 
Note that the sum starts at $i=1$ and does not include the empty face.

\begin{corollary}
\label{cor_fh_homology_manifolds}
Let $\SC$ be a homology manifold of dimension $d-1$. Then
\begin{equation}\label{eq_interiorfacesreciprocity}
  x^d \tilde h\left(\frac{x+1}{x}\right) =
\tilde f^\interior (x) + m_\emptyset.
\end{equation}
\end{corollary}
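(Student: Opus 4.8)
The plan is to specialize Theorem~\ref{thm_fh_reciprocity} to the homology manifold setting by plugging in the multiplicities computed in Proposition~\ref{prop_hom_manifold_m_F}. Starting from the identity
\[
x^d \tilde h\!\left(\frac{x+1}{x}\right) = \sum_{F\in\SC} m_F\, x^{|F|},
\]
I would split the sum over $F\in\SC$ into three pieces: the empty face, the non-empty interior faces, and the non-empty boundary faces. By Proposition~\ref{prop_hom_manifold_m_F}, every boundary face contributes $m_F=0$, so that part of the sum vanishes entirely. Every non-empty interior face contributes $m_F=1$, so these faces contribute $\sum_{F} x^{|F|}$ where $F$ ranges over non-empty interior faces; grouping by cardinality $|F|=i$ (equivalently, by dimension $i-1$), this is exactly $\sum_{i=1}^d f_{i-1}^{\interior} x^i = \tilde f^\interior(x)$ by the definition in~\eqref{eq_tilde_f_interior_polynomial}. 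Finally, the empty face contributes $m_\emptyset x^0 = m_\emptyset$.

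Putting the three pieces together gives
\[
x^d \tilde h\!\left(\frac{x+1}{x}\right) = m_\emptyset + \tilde f^\interior(x) + 0,
\]
which is precisely~\eqref{eq_interiorfacesreciprocity}. One small point worth stating explicitly: since every non-empty face of a homology manifold is either a boundary face or an interior face (the link has the homology of a ball or of a sphere, and these cases are mutually exclusive once we distinguish whether $H_{d-1-|F|}$ vanishes), this trichotomy of $\SC$ into $\{\emptyset\}$, non-empty interior faces, and non-empty boundary faces is an honest partition, so no face is double-counted or omitted.

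There is essentially no obstacle here: the corollary is a direct bookkeeping consequence of the $f{=}h$ reciprocity once Proposition~\ref{prop_hom_manifold_m_F} is in hand. The only thing to be careful about is the indexing convention — that $\tilde f^\interior$ deliberately omits the empty face (the sum runs from $i=1$), which is why the $m_\emptyset$ term appears separately on the right-hand side rather than being absorbed. If one preferred, one could also note that $m_\emptyset = (-1)^{d-1}\widetilde\chi(\SC)$ by~\eqref{eq_multiplicity_empty}, but this is not needed for the statement as written.
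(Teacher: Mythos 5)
Your proof is correct and follows exactly the paper's own argument: apply Theorem~\ref{thm_fh_reciprocity} and use Proposition~\ref{prop_hom_manifold_m_F} to kill the boundary terms, count interior faces with multiplicity one, and keep the separate $m_\emptyset$ term for the empty face. The extra remark that boundary and interior faces partition the non-empty faces is a fine (if implicit in the paper) clarification, but nothing in your route differs from the published proof.
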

 
 \begin{proof}
 Applying Theorem~\ref{thm_fh_reciprocity}, we see that the terms in the sum of Equation~\eqref{eq_fh_reciprocity}
 that correspond to (non-empty) boundary faces vanish ($m_F=0$), while the ones corresponding to interior faces are counted with multiplicity one ($m_F=1$). Therefore,
\[
x^d \tilde h\left(\frac{x+1}{x}\right) 
=   \sum_{\emptyset \neq F\in \SC} m_F x^{|F|}  + m_\emptyset
=  \sum_{i=1}^d f_{i-1}^\interior x^i + m_\emptyset.
\]
\end{proof}

\begin{theorem}[Polynomial Dehn--Sommerville relation for homology manifolds]
\label{thm_polynomialDehnSommerville_homology}
Let $\SC$ be a homology manifold of dimension $d-1$. Then, 
\begin{align}
(-1)^d \tilde f(x) = \tilde f^\interior (-x-1) + m_\emptyset. \label{eq_f_fint_one}
\end{align}
\end{theorem}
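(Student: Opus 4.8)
The plan is to combine the two evaluations of $\tilde h$ established above, namely Theorem~\ref{thm_fh_tilde} and the homology-manifold version of the $f{=}h$ reciprocity recorded in Corollary~\ref{cor_fh_homology_manifolds}. The idea is that both $\tilde f(x)$ and $\tilde f^\interior(x) + m_\emptyset$ arise from $\tilde h$ evaluated at M\"obius-type substitutions, and these two substitutions are related by a single sign change $x \mapsto -x-1$. So the whole argument is an exercise in bookkeeping of which substitution goes where.

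First I would start from Corollary~\ref{cor_fh_homology_manifolds}, which says $x^d \tilde h\left(\tfrac{x+1}{x}\right) = \tilde f^\interior(x) + m_\emptyset$, and make the substitution $x \mapsto -x-1$ in it. On the right-hand side this produces exactly $\tilde f^\interior(-x-1) + m_\emptyset$, the object appearing in the statement. On the left-hand side, $x^d$ becomes $(-x-1)^d = (-1)^d (x+1)^d$, and the argument of $\tilde h$ becomes $\tfrac{(-x-1)+1}{-x-1} = \tfrac{-x}{-x-1} = \tfrac{x}{x+1}$. Hence the left-hand side becomes $(-1)^d (x+1)^d \tilde h\left(\tfrac{x}{x+1}\right)$. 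Now I invoke Theorem~\ref{thm_fh_tilde}, which identifies $(x+1)^d \tilde h\left(\tfrac{x}{x+1}\right)$ with $\tilde f(x)$. Putting the pieces together gives $(-1)^d \tilde f(x) = \tilde f^\interior(-x-1) + m_\emptyset$, which is precisely \eqref{eq_f_fint_one}.

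The only point requiring a little care is that these identities are being manipulated as rational functions of $x$ (there are denominators $x$ and $x+1$ in the arguments of $\tilde h$), so strictly speaking I would note that both sides of each displayed identity are polynomials in $x$ once cleared of denominators, and the substitution $x \mapsto -x-1$ is a well-defined automorphism of $\mathbbm{k}(x)$ that is an involution; alternatively one checks the identities hold for all but finitely many values of $x$ and concludes equality of polynomials. The main (and essentially only) obstacle is making sure the substitution $x\mapsto -x-1$ interacts correctly with the factors $x^d$ versus $(x+1)^d$ and that the two M\"obius substitutions $\tfrac{x+1}{x}$ and $\tfrac{x}{x+1}$ really are swapped by it — but this is a direct computation, not a genuine difficulty. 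No deeper input is needed: the homological content has already been absorbed into Corollary~\ref{cor_fh_homology_manifolds} via Proposition~\ref{prop_hom_manifold_m_F}.
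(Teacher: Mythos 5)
Your proposal is correct and follows essentially the same route as the paper: substitute $x \mapsto -x-1$ in Corollary~\ref{cor_fh_homology_manifolds}, observe that the M\"obius argument $\frac{x+1}{x}$ becomes $\frac{x}{x+1}$ and $x^d$ becomes $(-1)^d(x+1)^d$, and then apply Theorem~\ref{thm_fh_tilde}. Your added remark about treating the identities as rational-function (or polynomial, after clearing denominators) identities is a harmless extra precaution that the paper leaves implicit.
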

\begin{proof}
Replacing $x$ by $-x-1$ in Equation~\eqref{eq_interiorfacesreciprocity} yields
\begin{equation*}
  (-1)^d (x+1)^d \tilde h\left(\frac{x}{x+1}\right) =
\tilde f^\interior (-x-1) + m_\emptyset.
\end{equation*}
The left hand side is equal to $(-1)^d \tilde f(x)$ by Theorem~\ref{thm_fh_tilde}.
\end{proof}

\begin{remark}\label{rem_Macdonald}
In 1971, Macdonald proved a polynomial relation satisfied for finite simplicial complexes whose underlying topological space is a manifold~\cite[Theorem~2.1]{Macdonald_DehnSommerville_1971}. In Appendix~\ref{appendix_Macdonald}, we will show that Equation~\eqref{eq_f_fint_one} implies Macdonald's relation, and explain why Macdonald's relation is less general.
As far as we know, Macdonald's polynomial relation is the first appearance of a weaker instance\footnote{Not all relations in~\eqref{eq_gen_DehnSommerville_one} follow from Macdonald's polynomial relation; see Appendix~\ref{appendix_Macdonald} for details.} of Dehn--Sommerville relations for manifolds. 
As we will see in Corollary~\ref{cor_DehnSommerville_homology}, the polynomial relation~\eqref{eq_f_fint_one} implies a system of linear relations satisfied by the face and interior face numbers of the complex. Such linear relations were found independently by Gr\"abe in~1987~\cite[Theorem~2.1]{grabe_DehnSommerville_1987} and by Chen and Yan in~1997~\cite[Lemma~8]{ChenYan_DehnSomerville_1997} (both in the form of Equation~\eqref{eq_gen_DehnSommerville_version_two} further below). An $h$-version of the relations was proven by Novik and Swartz in~2009~\cite[Theorem~3.1]{novik_applications_2009}.

\end{remark}

\begin{corollary}[{\cites{Macdonald_DehnSommerville_1971,grabe_DehnSommerville_1987,ChenYan_DehnSomerville_1997,novik_applications_2009}} Dehn--Sommerville relations for homology manifolds, $f$-version]\label{cor_DehnSommerville_homology}
	Let $\SC$ be a homology manifold of dimension $d-1$. The following relations hold:
	\begin{align}
		\text{for } k\geq 1, && f_{k-1} &= \sum_{i=k}^d (-1)^{d-i} {i \choose k} f_{i-1}^\interior. \label{eq_gen_DehnSommerville_one}\\
		\text{for } k= 0, && f_{-1} &= \sum_{i=1}^d (-1)^{d-i}  f_{i-1}^\interior   +   (-1)^d m_\emptyset. \label{eq_gen_DehnSommerville_two} 
	\end{align} 
	The second relation is equivalent to the perhaps more elegant formulation:
	\begin{align}
		\chi(\SC) = (-1)^{d-1} \chi (\SC^\interior), \label{eq_gen_DehnSommerville_two_prime}
	\end{align}
	where $ \chi (\SC^\interior) \defs \sum_{i=1}^d (-1)^{i-1}f_{i-1}^\interior$.
\end{corollary}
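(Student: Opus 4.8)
The plan is to extract the linear relations directly from the polynomial identity \eqref{eq_f_fint_one} of Theorem~\ref{thm_polynomialDehnSommerville_homology} by expanding both sides in the monomial basis and comparing coefficients of $x^k$. First I would recall that $\tilde f(x) = \sum_{i=0}^d f_{i-1}x^i$ and $\tilde f^\interior(x) = \sum_{i=1}^d f_{i-1}^\interior x^i$, and then substitute $x \mapsto -x-1$ into the latter, writing
\begin{align*}
\tilde f^\interior(-x-1) = \sum_{i=1}^d f_{i-1}^\interior (-x-1)^i = \sum_{i=1}^d f_{i-1}^\interior (-1)^i (x+1)^i = \sum_{i=1}^d f_{i-1}^\interior (-1)^i \sum_{k=0}^i \binom{i}{k} x^k.
\end{align*}
Swapping the order of summation gives the coefficient of $x^k$ on the right-hand side of \eqref{eq_f_fint_one} as $\sum_{i=\max(k,1)}^d (-1)^i \binom{i}{k} f_{i-1}^\interior$, for $k \geq 1$, plus the constant term $m_\emptyset$ contributing only at $k=0$. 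The coefficient of $x^k$ on the left-hand side is simply $(-1)^d f_{k-1}$.

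For $k \geq 1$ the comparison yields $(-1)^d f_{k-1} = \sum_{i=k}^d (-1)^i \binom{i}{k} f_{i-1}^\interior$; multiplying through by $(-1)^d$ and using $(-1)^d(-1)^i = (-1)^{d-i}$ gives exactly \eqref{eq_gen_DehnSommerville_one}. For $k = 0$, note $\binom{i}{0}=1$ and the left-hand side constant term is $(-1)^d f_{-1}$, while the right-hand side constant term is $\sum_{i=1}^d (-1)^i f_{i-1}^\interior + m_\emptyset$; multiplying by $(-1)^d$ and isolating $f_{-1}$ produces \eqref{eq_gen_DehnSommerville_two}. Finally, for the equivalent formulation \eqref{eq_gen_DehnSommerville_two_prime}, I would rewrite \eqref{eq_gen_DehnSommerville_two} using $f_{-1}=1$ and $\widetilde\chi(\SC) = -\sum_{i=0}^d (-1)^i f_{i-1}$, so that $m_\emptyset = (-1)^{d-1}\widetilde\chi(\SC)$ from \eqref{eq_multiplicity_empty}; substituting and recognizing $\sum_{i=1}^d(-1)^{i-1}f_{i-1}^\interior = \chi(\SC^\interior)$ and $\chi(\SC) = \widetilde\chi(\SC)+1$ on the other side, the relation $f_{-1} = 1$ telescopes the $\pm 1$ terms and leaves $\chi(\SC) = (-1)^{d-1}\chi(\SC^\interior)$.

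I do not anticipate a genuine obstacle here, since everything follows mechanically from Theorem~\ref{thm_polynomialDehnSommerville_homology}, which already did the conceptual work. The only point requiring minor care is bookkeeping the sign conventions and the off-by-one shifts between $f_{i-1}$-indexing and exponent-$i$-indexing, and making sure the $k=0$ case correctly absorbs the constant $m_\emptyset$ while the $k\geq 1$ cases do not see it. Verifying the equivalence of \eqref{eq_gen_DehnSommerville_two} and \eqref{eq_gen_DehnSommerville_two_prime} is the one spot where I would be most careful, as it mixes reduced and unreduced Euler characteristics for both $\SC$ and $\SC^\interior$; I would double-check it by testing on a small example such as the boundary of a simplex (where $\SC^\interior$ is empty) or a triangulated disk.
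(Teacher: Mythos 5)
Your proposal is correct and matches the paper's own proof: both extract \eqref{eq_gen_DehnSommerville_one} and \eqref{eq_gen_DehnSommerville_two} by comparing coefficients of $x^k$ in the polynomial relation \eqref{eq_f_fint_one}, treating $k\geq 1$ and $k=0$ separately because of the constant $m_\emptyset$, and then deduce \eqref{eq_gen_DehnSommerville_two_prime} from the $k=0$ relation via $f_{-1}=1$ and $m_\emptyset=(-1)^{d-1}\widetilde\chi(\SC)$. The only difference is that you write out the binomial expansion of $(-x-1)^i$ explicitly, which the paper leaves implicit; the sign bookkeeping and the final Euler-characteristic identification are handled correctly.
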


\begin{proof}
This result is essentially a reformulation of the polynomial Dehn--Sommerville relation in Equation~\eqref{eq_f_fint_one}, which states that 
\[
(-1)^d \tilde f(x) = \tilde f^\interior \bigl(-(x+1)\bigr) + m_\emptyset.
\]
In order to prove the corollary, we just need to compare the coefficients of the polynomials on both sides of this 
equation.
Since $m_\emptyset=(-1)^{d-1}\widetilde \chi(\SC)$ is a constant number, we split the analysis into two cases: 
the coefficient of $x^k$ for $k\geq 1$, and the coefficient of $x^0$ (constant term, $k=0$).
 
For $k\geq 1$, we have
\begin{align*}
(-1)^d f_{k-1} &= \sum_{i=k}^d (-1)^i {i \choose k} f_{i-1}^\interior.
\end{align*}
Equation~\eqref{eq_gen_DehnSommerville_one} follows by multiplying by $(-1)^d$ and using $(-1)^{d+i}=(-1)^{d-i}$. 
 
For $k=0$, we proceed similarly using the extra constant term $m_\emptyset$:
\begin{align*}
(-1)^d f_{-1} &= \sum_{i=1}^d (-1)^i f_{i-1}^\interior + m_\emptyset.
\end{align*}
Equation~\eqref{eq_gen_DehnSommerville_two} follows by multiplying again by $(-1)^d$ and using $(-1)^{d+i}=(-1)^{d-i}$.

Finally, we need to argue that Equation~\eqref{eq_gen_DehnSommerville_two} is equivalent to Equation~\eqref{eq_gen_DehnSommerville_two_prime}.
For this we just need to simplify Equation~\eqref{eq_gen_DehnSommerville_two}. We obtain
\begin{align*}
1=  f_{-1} = (-1)^{d-1}\sum_{i=1}^d (-1)^{i-1}  f_{i-1}^\interior +   (-1)^d m_\emptyset =  (-1)^{d-1} \chi(\SC^\interior)+   (-1)^d m_\emptyset .
\end{align*}
By~\eqref{eq_multiplicity_empty}, we have that  
$1-(-1)^d m_\emptyset = 1+ \widetilde \chi(\SC)= \chi(\SC)$. 
Thus,
\begin{align*}
 \chi(\SC) =  (-1)^{d-1}\chi(\SC^\interior)
\end{align*}
as desired.
\end{proof}

\begin{example}\label{example_cylinder}
Consider the simplicial complex $\SC$ of dimension $d-1=2$ illustrated in Figure~\ref{fig_cylinder}. 
It is obtained by subdividing a rectangle into four triangles and identifying the top and bottom edges of the rectangle as shown. The result is a triangulated cylinder. This is a triangulated manifold with boundary, and in particular a homology manifold.

This triangulated manifold consists of 4 vertices,  8 edges and 4 triangles. The $f$-vector is then $(f_{-1},f_0,f_1,f_2)=(1,4,8,4)$. 
There are no interior vertices, only 4 of the edges are interior (the diagonal edges, and the top and bottom edge), and the 4 triangles are interior. Thus, the $f^\interior$-vector is $(f_0^\interior,f_1^\interior,f_2^\interior)=(0,4,4)$.

The Dehn--Sommerville relations in Corollary~\ref{cor_DehnSommerville_homology} give the following linear relations for $k\geq 1$:
\begin{align*}
 \text{for } k= 1, && f_{0} &=  f_0^\interior -2 f_1^\interior+3 f_2^\interior, \\
 \text{for } k= 2, && f_{1} &=  -f_1^\interior+3f_2^\interior, \\
 \text{for } k= 3, && f_{2} &= f_2^\interior, 
\end{align*}
which can be easily verified by plugging in the numbers.
For $k=0$, we obtain $\chi(\SC) = (-1)^{2} \chi (\SC^\interior)$, which is verified by 
\begin{align*}
f_0-f_1+f_2&=f_0^\interior-f_1^\interior+f_2^\interior \\
4-8+4 &= 0-4+4.
\end{align*}
Note that this last relation is just a linear combination of the first three. 
\end{example}

\begin{remark}
In general, Equation~\eqref{eq_gen_DehnSommerville_two_prime} (or equivalently, Equation~\eqref{eq_gen_DehnSommerville_two}) is a linear combination of the equations in~\eqref{eq_gen_DehnSommerville_one}. Thus,~\eqref{eq_gen_DehnSommerville_one} alone may be regarded as the Dehn--Sommerville relations.
\end{remark}
  
\begin{figure}
\includegraphics[scale=1,page=17]{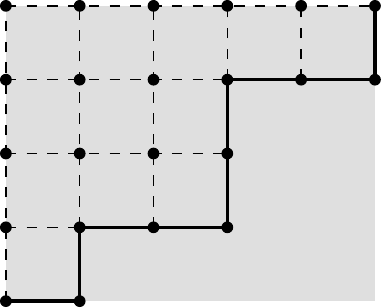}
\caption{Example of a homology manifold: a triangulated cylinder with boundary. The bottom and top edges of the rectangle on the left are identified giving rise to the cylinder on the right.}
\label{fig_cylinder}
\end{figure}

\section{Reciprocal complexes and semi-Eulerian complexes}
Motivated by homology manifolds, we introduce a more general family of simplicial complexes which we call reciprocal complexes.
A simplicial complex~$\SC$ is called \defn{reciprocal} if $m_F\in \{0,1\}$ for every non-empty face $F\in \SC$. 
A non-empty face $F\in \SC$ is called a \defn{boundary face} if $m_F=0$, and it is called \defn{interior} if~$m_F=1$. The empty face is considered to be a boundary face by definition. We say that $\SC$ is a complex \defn{without boundary} if the only boundary face is the empty face.
As above, we denote by $f_{i}^{\interior}$ the number of $i$-dimensional interior faces of $\SC$ for $i\in\{0,1,\ldots,d-1\}$, and by $\tilde f^\interior(x)$ its {$\tilde f^\interior$-polynomial}.

One interesting sub-class of examples is the collection of reciprocal complexes without boundary. These simplicial complexes
are well studied in the literature and are called semi-Eulerian. They were introduced in the seminal work~\cite{klee_DehnSommervilleRelations_1964} by Klee, where he proves that semi-Eulerian complexes satisfy a general version of the Dehn--Sommerville relations that was previously known for boundary complexes of simplicial polytopes.

A simplicial complex $\SC$ is called \defn{semi-Eulerian} if 
every non-empty face $F\in \SC$ has multiplicity $m_{F}=1$, 
or equivalently
 \begin{align}\label{eq_semiEulerian}
 \widetilde \chi\bigl(\link{F}{\SC}\bigr) = (-1)^{d-1-|F|}.
 \end{align}
 That is, the link of a non-empty face has the same reduced Euler characteristic as a sphere
 of dimension $d-1-|F|$.
 If in addition, $\widetilde \chi\bigl(\SC\bigr)= \widetilde \chi\bigl(\sphere^{d-1}\bigr) = (-1)^{d-1}$ where $\sphere^{d-1}$ denotes a $(d-1)$-sphere, then~$\SC$ is called an \defn{Eulerian complex} (this is equivalent to~$m_\emptyset=1$).\footnote{Semi-Eulerian complexes as stated here are called Eulerian complexes in~\cite{klee_DehnSommervilleRelations_1964}. We follow the convention used in~\cite{swartz_face_enumeration_2009}.}
For instance, 
every simplicial sphere is Eulerian.  
 
 The next
 lemma follows by definition.
\begin{lemma}\label{lem_semiEulerian_reciprocal}
A simplicial complex $\SC$ is semi-Eulerian if and only if $\SC$ is a reciprocal complex without boundary. 
In this case, $f_{i}^\interior = f_{i}$ for $0\leq i\leq d-1$.
\end{lemma}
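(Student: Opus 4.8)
The plan is to unwind the definitions on both sides of the claimed equivalence, since the statement is essentially a translation exercise between the notion of ``reciprocal complex without boundary'' and the classical notion of ``semi-Eulerian complex''. First I would recall that, by definition, $\SC$ is reciprocal when $m_F\in\{0,1\}$ for every non-empty face $F$, and that among reciprocal complexes, ``without boundary'' means that every non-empty face $F$ has $m_F=1$ (the empty face being a boundary face by convention, which plays no role here). On the other hand, $\SC$ is semi-Eulerian precisely when $m_F=1$ for every non-empty face $F$. These two conditions are literally the same condition on non-empty faces, so the two classes coincide. The only thing to check is that the condition ``$m_F=1$ for all non-empty $F$'' automatically forces $m_F\in\{0,1\}$, i.e. that a semi-Eulerian complex is in particular reciprocal — but this is immediate, since $1\in\{0,1\}$.

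Next I would make the Euler-characteristic reformulation explicit, for completeness, using Theorem~\ref{thm_fh_reciprocity}. By Equation~\eqref{eq_multiplicity}, $m_F=(-1)^{d-1-|F|}\widetilde\chi(\link{F}{\SC})$ for every face $F$, so $m_F=1$ is equivalent to $\widetilde\chi(\link{F}{\SC})=(-1)^{d-1-|F|}$, which is exactly Equation~\eqref{eq_semiEulerian}. Thus the defining condition of semi-Eulerian can be phrased either via multiplicities or via signed reduced Euler characteristics of links, and both match the ``without boundary'' condition for reciprocal complexes. This establishes the biconditional.

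Finally, for the last sentence of the lemma, I would observe that the interior faces of a reciprocal complex are by definition the non-empty faces $F$ with $m_F=1$, and the boundary faces (besides $\emptyset$) are those with $m_F=0$. In a reciprocal complex without boundary there are no non-empty boundary faces, so every non-empty face is interior; hence for each $i$ with $0\le i\le d-1$, every $i$-dimensional face is interior, giving $f_i^\interior=f_i$. I expect no real obstacle here — the entire content is bookkeeping of definitions — so the ``hard part'', such as it is, is merely to present the chain of equivalences cleanly and to be careful that the empty-face convention ($m_\emptyset$ irrelevant, $\emptyset$ always a boundary face) does not create a spurious discrepancy between the two sides.
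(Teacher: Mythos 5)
Your proposal is correct and matches the paper's treatment: the paper simply notes that the lemma ``follows by definition,'' and your argument is precisely that definitional unwinding (semi-Eulerian means $m_F=1$ for all non-empty $F$, which is literally the reciprocal-without-boundary condition, whence every non-empty face is interior and $f_i^\interior=f_i$). No gaps; the Euler-characteristic reformulation you include is already built into the paper's definition via Equation~\eqref{eq_semiEulerian}.
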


Note that the proofs of the Dehn--Sommerville relations for homology manifolds in Section~\ref{sec_homology} only rely on the fact that $m_F\in \{0,1\}$ for every non-empty face $F\in \SC$. So, the same proofs apply for reciprocal complexes and semi-Eulerian complexes. 
Part~\eqref{item_part2_klee} of the following theorem, restricted to the case of semi-Eulerian complexes, is equivalent to~\cite[Theorem~3.2]{klee_DehnSommervilleRelations_1964}.  

\begin{theorem}[Dehn--Sommerville relations for reciprocal complexes, $f$-version]
Let $\SC$ be a reciprocal complex of dimension $d-1$ (for example a homology manifold or a semi-Eulerian complex). The following hold:
\begin{enumerate}
\item the polynomial Dehn--Sommerville relation~\eqref{eq_f_fint_one} in Theorem~\ref{thm_polynomialDehnSommerville_homology};
\item the Dehn--Sommerville relations~\eqref{eq_gen_DehnSommerville_one} and~\eqref{eq_gen_DehnSommerville_two}-\eqref{eq_gen_DehnSommerville_two_prime} in Corollary~\ref{cor_DehnSommerville_homology}. \label{item_part2_klee}
\end{enumerate}
\end{theorem}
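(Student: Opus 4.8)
The plan is to observe that this theorem asserts nothing beyond what was already established for homology manifolds, once we notice that those arguments never used the manifold hypothesis in full. Concretely, in Section~\ref{sec_homology} the only structural input that feeds into Corollary~\ref{cor_fh_homology_manifolds}, Theorem~\ref{thm_polynomialDehnSommerville_homology}, and Corollary~\ref{cor_DehnSommerville_homology} is Proposition~\ref{prop_hom_manifold_m_F}, namely that $m_F\in\{0,1\}$ for every non-empty face $F$, with the faces split into boundary ($m_F=0$) and interior ($m_F=1$) accordingly. For a reciprocal complex this dichotomy holds \emph{by definition}, so the same chain of deductions goes through verbatim.

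First I would invoke Theorem~\ref{thm_fh_reciprocity}: the evaluation $x^d\tilde h\bigl(\frac{x+1}{x}\bigr)$ equals $\sum_{F\in\SC}m_Fx^{|F|}$. Since $\SC$ is reciprocal, every non-empty face contributes $m_F\in\{0,1\}$, with the contributing (interior) faces tallied by $\tilde f^\interior(x)=\sum_{i=1}^d f_{i-1}^\interior x^i$, and the empty face contributes the constant $m_\emptyset$. This reproduces Equation~\eqref{eq_interiorfacesreciprocity} for $\SC$, i.e. $x^d\tilde h\bigl(\frac{x+1}{x}\bigr)=\tilde f^\interior(x)+m_\emptyset$; this is exactly the content of Corollary~\ref{cor_fh_homology_manifolds}, whose proof used nothing about manifolds beyond $m_F\in\{0,1\}$.

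Next, substituting $x\mapsto -x-1$ into this identity and applying Theorem~\ref{thm_fh_tilde} to rewrite the left-hand side as $(-1)^d\tilde f(x)$ yields $(-1)^d\tilde f(x)=\tilde f^\interior(-x-1)+m_\emptyset$, which is precisely the polynomial relation~\eqref{eq_f_fint_one}; this establishes part~(1). For part~(2), I would simply extract coefficients from this polynomial identity exactly as in the proof of Corollary~\ref{cor_DehnSommerville_homology}: comparing coefficients of $x^k$ for $k\ge 1$ gives~\eqref{eq_gen_DehnSommerville_one}, comparing the constant terms gives~\eqref{eq_gen_DehnSommerville_two}, and the reformulation via $m_\emptyset=(-1)^{d-1}\widetilde\chi(\SC)$ together with $\chi(\SC)=\widetilde\chi(\SC)+1$ gives~\eqref{eq_gen_DehnSommerville_two_prime}.

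There is essentially no obstacle here — the work is bookkeeping, and the only thing worth stating explicitly is the meta-observation that the homology-manifold proofs are \emph{formal consequences} of $m_F\in\{0,1\}$, so they transfer to the strictly larger class of reciprocal complexes (and, via Lemma~\ref{lem_semiEulerian_reciprocal}, to semi-Eulerian complexes, where $f_i^\interior=f_i$ and one recovers the classical Klee relations). If one wanted to avoid even repeating the coefficient comparison, the cleanest write-up is: ``The proof of Corollary~\ref{cor_fh_homology_manifolds}, Theorem~\ref{thm_polynomialDehnSommerville_homology}, and Corollary~\ref{cor_DehnSommerville_homology} only uses that $m_F\in\{0,1\}$ for non-empty $F$, which holds for any reciprocal complex; hence the statements carry over.''
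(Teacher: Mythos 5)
Your proposal is correct and matches the paper's own argument: the paper likewise just notes that the proofs in Section~\ref{sec_homology} only use $m_F\in\{0,1\}$ for non-empty faces, which holds for reciprocal complexes by definition, so Corollary~\ref{cor_fh_homology_manifolds}, Theorem~\ref{thm_polynomialDehnSommerville_homology} and Corollary~\ref{cor_DehnSommerville_homology} transfer verbatim. No gaps; your write-up simply spells out the same chain of deductions in more detail.
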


Equation~\eqref{eq_gen_DehnSommerville_two_prime} has a nice implication for semi-Eulerian complexes. 
Let $\SC$ be an odd-dimensional semi-Eulerian complex (a reciprocal complex without boundary). Then, $f_i^\interior = f_i$ for $0\leq i\leq d-1$, and so  $\chi(\SC) =  \chi(\SC^\interior)$. Combining this with Equation~\eqref{eq_gen_DehnSommerville_two_prime}  
and the assumption that $d-1$ is odd yields $\chi(\SC)=-\chi(\SC)$.  Therefore, $\chi(\SC)=0$, 
which is equal to the Euler characteristic of an odd-dimensional sphere. From this we deduce that every odd-dimensional semi-Eulerian complex is Eulerian, a known result.

Equation~\eqref{eq_gen_DehnSommerville_one} expresses the face numbers of reciprocal complexes as linear combinations of the interior face numbers. Conversely, we can also express the interior face numbers as (the same) linear combinations of the face numbers. 

\begin{corollary}
\label{thm_generalizedDehnSommerville_reciprocal_version_two}
	Let $\SC$ be a reciprocal complex of dimension $d-1$ (for example a homology manifold or a semi-Eulerian complex). For $k\geq 1$, the following relations hold:
	\begin{align}
		 f_{k-1}^\interior &= \sum_{i=k}^d (-1)^{d-i} {i \choose k} f_{i-1}. \label{eq_gen_DehnSommerville_version_two}
	\end{align} 
\end{corollary}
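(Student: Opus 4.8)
The statement to prove, Corollary~\ref{thm_generalizedDehnSommerville_reciprocal_version_two}, is the ``inverse'' form of the Dehn--Sommerville relations~\eqref{eq_gen_DehnSommerville_one}: it expresses the interior face numbers $f_{k-1}^\interior$ in terms of the ordinary face numbers $f_{i-1}$, using the \emph{same} binomial coefficients $(-1)^{d-i}\binom{i}{k}$. The plan is to derive this from the polynomial Dehn--Sommerville relation~\eqref{eq_f_fint_one}, exactly as Corollary~\ref{cor_DehnSommerville_homology} was derived, but reading the relation in the other direction. Recall that~\eqref{eq_f_fint_one} states $(-1)^d\tilde f(x) = \tilde f^\interior(-x-1) + m_\emptyset$, and that this relation holds for any reciprocal complex (the only property used in its proof is $m_F\in\{0,1\}$ for non-empty $F$).

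First I would observe that~\eqref{eq_f_fint_one} is an involutive identity: substituting $x \mapsto -x-1$ and multiplying by $(-1)^d$ turns it back into a relation of the same shape but with the roles of $\tilde f$ and $\tilde f^\interior$ swapped. Concretely, replace $x$ by $-x-1$ in~\eqref{eq_f_fint_one} to get $(-1)^d \tilde f(-x-1) = \tilde f^\interior(x) + m_\emptyset$, hence
\begin{align*}
\tilde f^\interior(x) = (-1)^d \tilde f(-x-1) - m_\emptyset.
\end{align*}
Now I expand the right-hand side in powers of $x$. Writing $\tilde f(y) = \sum_{i=0}^d f_{i-1} y^i$ and setting $y = -(x+1)$, we have $(-1)^d\tilde f(-x-1) = (-1)^d\sum_{i=0}^d f_{i-1}(-1)^i (x+1)^i = \sum_{i=0}^d (-1)^{d-i} f_{i-1}\sum_{k=0}^i \binom{i}{k} x^k$. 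Comparing the coefficient of $x^k$ on both sides, for $k\geq 1$ the constant $m_\emptyset$ does not interfere, and we obtain
\begin{align*}
f_{k-1}^\interior = \sum_{i=k}^d (-1)^{d-i}\binom{i}{k} f_{i-1},
\end{align*}
which is precisely~\eqref{eq_gen_DehnSommerville_version_two}. (One could equally extract the $k=0$ coefficient to recover~\eqref{eq_gen_DehnSommerville_two}, but the corollary only asserts $k\geq 1$, so this is not needed.)

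There is essentially no obstacle here; the only thing to be careful about is bookkeeping with the substitution $x \mapsto -x-1$ and the sign $(-1)^d$, and the fact that the binomial expansion $(x+1)^i = \sum_{k=0}^i \binom{i}{k} x^k$ contributes to the $x^k$-coefficient only for $i \geq k$, which is why the sum in~\eqref{eq_gen_DehnSommerville_version_two} starts at $i=k$. An alternative, perhaps cleaner, route that avoids re-deriving the substitution identity is to invoke Theorem~\ref{thm_fh_tilde} and Corollary~\ref{cor_fh_homology_manifolds} (which, as noted in the text, applies verbatim to reciprocal complexes): from~\eqref{eq_interiorfacesreciprocity} we have $\tilde f^\interior(x) + m_\emptyset = x^d\tilde h\!\left(\frac{x+1}{x}\right)$, and substituting $x\mapsto -x-1$ into this and then applying~\eqref{eq_fh_tilde} yields $\tilde f^\interior(-x-1) + m_\emptyset = (-1)^d(x+1)^d\tilde h\!\left(\frac{x}{x+1}\right) = (-1)^d\tilde f(x)$ — but this is just~\eqref{eq_f_fint_one} again, so the first route is the most direct. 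Either way, comparing $x^k$-coefficients for $k\geq 1$ finishes the proof.
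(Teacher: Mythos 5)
Your proposal is correct and follows the paper's own route: the identity $\tilde f^\interior(x) = (-1)^d\tilde f(-x-1) - m_\emptyset$ you obtain by substituting $x\mapsto -x-1$ in~\eqref{eq_f_fint_one} is exactly the paper's equivalent relation~\eqref{eq_f_fint_two}, and comparing coefficients of $x^k$ for $k\geq 1$ is precisely how the paper concludes. The sign bookkeeping and the observation that $m_\emptyset$ only affects the constant term are handled correctly.
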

\begin{proof}
The proof of this result is the same as the proof of~\eqref{eq_gen_DehnSommerville_one} in Corollary~\ref{cor_DehnSommerville_homology}. 
The only difference is that in this case we compare the coefficients of $x^k$ for $k\geq 1$ in the polynomial relation 
\begin{align}\label{eq_f_fint_two}
  (-1)^d \tilde f^\interior (x)  = \tilde f(-x-1) - (-1)^d m_\emptyset,
\end{align}
which is equivalent to the polynomial Dehn--Sommerville relation~\eqref{eq_f_fint_one} used in the proof of~\eqref{eq_gen_DehnSommerville_one}. 
\end{proof}

We invite the reader to check that these relations hold for our Example~\ref{example_cylinder}. 

\subsection{Two questions}
Reciprocal complexes are interesting objects on their own. They have similarities with the $SB$-lattices of P.~Hersh and K.~M{\'e}sz{\'a}ros in~\cite{hersh_sb_labelings_2017} (via their face posets), whose M\"obius function takes values from the set~$\{-1,0,1\}$. 

We highlight two (topological) questions about reciprocal complexes.   
The first question is about their relation to homology manifolds. As we pointed out already, homology manifolds are particular cases of reciprocal complexes, but one may wonder if the opposite statement holds.

\begin{question}
	Are there reciprocal complexes that are not homology manifolds?  
\end{question}

Isabella Novik~\cite{novik_personalCommunication_2021} pointed out to us an affirmative answer to this question: 

\begin{quotation}
``
There are plenty reciprocal complexes that are not homology manifolds as there are plenty of semi-Eulerian complexes that are not homology manifolds. For instance, consider the ``double banana'' space, \ie two 2-dimensional spheres that are glued along two points. (As an example, take two copies of the boundary complex of octahedron and attach them by identifying two opposite vertices of the first copy with two opposite vertices of the second.) This space is even Eulerian, but it is definitely not a homology manifold as the link of each of the two special points is the union of two circles.

Most importantly, while all homology spheres, all odd-dimensional closed homology manifolds\footnote{A closed homology manifold refers to a homology manifold without boundary.} as well as all even-dimensional closed homology manifolds of characteristic two are Eulerian, there are plenty of other Eulerian complexes. Similarly, while all closed homology manifolds are semi-Eulerian, there are plenty of other semi-Eulerian complexes.''
\end{quotation}

\medskip
In the same spirit, we remark that there are plenty of reciprocal complexes which are not semi-Eulerian. As an explicit example, remove a triangle from the ``double banana'' described above. In this case, the multiplicity $m_F$ of any vertex or edge of this removed triangle is equal to zero, while the multiplicity of all other non-empty faces is equal to one.

The second question is related to the structure of boundary faces of reciprocal complexes. Recall that a non-empty face $F\in \SC$ is called a boundary face if $m_\emptyset =0$. It is not clear from this definition whether the collection of boundary faces gives a sub-complex. 

\begin{question}
	Is the collection of boundary faces of a reciprocal complex $\SC$ a sub-complex? In other words, is the face of a boundary face also a boundary face? 
	If so, is the boundary $\partial \SC$ a codimension~1 sub-complex? Is it reciprocal? 
\end{question}  

The subtle study of the boundary complex of homology manifolds is investigated in~\cite{grabe_uber_1984} (see also~\cite{grabe_DehnSommerville_1987}) and~\cite{mitchell_boundaryHomologyManifolds_1990}. Quoting~\cite{grabe_DehnSommerville_1987}:
\begin{quotation}
``
... nothing can be said about the behaviour of this boundary if~$\SC$ is a non-orientable homology $N$-manifold. 
Thus let us call $Bd\,\SC$ a g\underline{ood boundar}y if it is a homology $(N-1)$-manifold without boundary.
All finite triangulations of topological manifolds are so. 
''
\end{quotation}

\section{General simplicial complexes}
In this section, we present the $h$-version of the Dehn--Sommerville relations for arbitrary abstract simplicial complexes as recently shown by Sawaske and Xue in~\cite{sawaske_nonEulerianDehnSommerville_2021}. As a corollary, one obtains the $h$-version of Klee's Dehn--Sommerville relations for semi-Eulerian complexes~\cite{klee_DehnSommervilleRelations_1964}.   

As before, these results are straightforward consequences of the two evaluations~\eqref{eq_fh_tilde} and~\eqref{eq_fh_reciprocity} of the $\tilde h$-polynomial in Theorem~\ref{thm_fh_tilde} and Theorem~\ref{thm_fh_reciprocity}, respectively.    
We highlight that these two theorems are the essence of the Dehn--Sommerville relations. 

Following~\cite{sawaske_nonEulerianDehnSommerville_2021}, we define the error $\epsilon_F$ of a face $F\in \SC$ as
\begin{align}\label{eq_error_definition1}
\epsilon_F \defs \widetilde \chi\bigl(\link{F}{\SC}\bigr) - (-1)^{d-1-|F|},
\end{align}
or equivalently,
\begin{align}\label{eq_error_definition2}
\epsilon_F = (-1)^{d-1-|F|} (m_F-1).
\end{align}
 
Note that $\link{F}{\SC}$ has dimension $d-1-|F|$, and $(-1)^{d-1-|F|}$ is the reduced Euler characteristic of a sphere of this dimension. 

\begin{theorem}[Polynomial Dehn--Sommerville relation for simplicial complexes, $h$-version]
\label{thm_PolynomialDehnSommerville_simplicialComplexes}
Let $\SC$ be a simplicial complex of dimension $d-1$. 
The following relation holds:
\begin{align}\label{eq_polynomialDehnSommerville_hversion}
\sum_{i=0}^d (h_i-h_{d-i})(x+1)^ix^{d-i} = 
\sum_{F\in \SC} (m_F-1)x^{|F|}.
\end{align}
\end{theorem}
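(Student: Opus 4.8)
The plan is to combine the two evaluations of the $\tilde h$-polynomial provided by Theorem~\ref{thm_fh_tilde} and Theorem~\ref{thm_fh_reciprocity}, after a suitable substitution that makes them comparable. Notice that Theorem~\ref{thm_fh_tilde} evaluates $\tilde h$ at $\frac{x}{x+1}$, while Theorem~\ref{thm_fh_reciprocity} evaluates it at $\frac{x+1}{x}$; these two arguments are related by $y \mapsto \frac{1}{y}$ applied to $\frac{x+1}{x}$ giving $\frac{x}{x+1}$, so I expect the substitution $x \mapsto \frac{1}{x}$ (equivalently swapping the roles of $x$ and $x+1$) to be what ties them together. More precisely, I would start from $\tilde f(x) = (x+1)^d\,\tilde h\!\left(\frac{x}{x+1}\right)$ and $\sum_{F}m_F x^{|F|} = x^d\,\tilde h\!\left(\frac{x+1}{x}\right)$, and in the latter perform the substitution $x \mapsto \frac{x}{x+1}$, so that $\frac{x+1}{x}$ becomes $\frac{(x/(x+1))+1}{x/(x+1)} = \frac{(2x+1)/(x+1)}{x/(x+1)}$ — I'd want to double-check this, but the point is to engineer the argument of $\tilde h$ to land on something symmetric.

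A cleaner route: write everything in terms of the \emph{reversed} polynomial $x^d\tilde h(1/x) = \sum_i h_{d-i}x^i$. From Theorem~\ref{thm_fh_tilde}, substituting $x \mapsto \frac{1}{x}$ and clearing denominators gives a second polynomial identity expressing $\sum_i h_{d-i}(x+1)^i x^{d-i}$ in terms of $\tilde f(1/x)$ suitably rescaled; meanwhile Theorem~\ref{thm_fh_reciprocity} directly reads $\sum_i h_i(x+1)^i x^{d-i} = \sum_{F} m_F x^{|F|}$ after multiplying through by $x^d$ and using $\tilde h(\frac{x+1}{x}) = \sum_i h_i \frac{(x+1)^i}{x^i}$, so that $x^d \tilde h(\frac{x+1}{x}) = \sum_i h_i (x+1)^i x^{d-i}$. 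So the key computation is to verify that $x^d \tilde h\!\left(\frac{x+1}{x}\right) = \sum_{i=0}^d h_i (x+1)^i x^{d-i}$ on one hand, and that the same left-hand-side structure with $h_{d-i}$ in place of $h_i$ equals $\sum_F x^{|F|}$ (the ``all multiplicities $1$'' case, i.e. the genuinely Eulerian/sphere contribution). Then subtracting the two gives exactly
\begin{equation*}
\sum_{i=0}^d (h_i - h_{d-i})(x+1)^i x^{d-i} = \sum_{F\in\SC}(m_F - 1)x^{|F|},
\end{equation*}
which is the claim.

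To make the $h_{d-i}$ side precise, I would argue that $\sum_{i=0}^d h_{d-i}(x+1)^i x^{d-i} = \sum_{i=0}^d h_i (x+1)^{d-i} x^i$ (just reindexing $i \mapsto d-i$), and this equals $x^d \sum_i h_i \left(\frac{x+1}{x}\right)^{d-i} \cdot$ something — no: rather, $\sum_i h_i (x+1)^{d-i}x^i = (x+1)^d \sum_i h_i \left(\frac{x}{x+1}\right)^i = (x+1)^d \tilde h\!\left(\frac{x}{x+1}\right) = \tilde f(x) = \sum_{i} f_{i-1}x^i = \sum_{F\in\SC} x^{|F|}$, using Theorem~\ref{thm_fh_tilde} and the fact that $\tilde f(x)$ literally counts all faces of $\SC$ weighted by $x^{|F|}$ (including the empty face for $i=0$). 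That is the crucial identification: the $h_{d-i}$-combination reproduces the naive face-count $\sum_F x^{|F|}$, while the $h_i$-combination reproduces the weighted count $\sum_F m_F x^{|F|}$ via Theorem~\ref{thm_fh_reciprocity}. Subtracting finishes it.

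The main obstacle I anticipate is purely bookkeeping with the two substitutions $x \mapsto \frac{x}{x+1}$ and $x \mapsto \frac{x+1}{x}$ and the associated powers of $x$ and $(x+1)$ used to clear denominators — it is easy to be off by a factor of $(x+1)^d$ or $x^d$, or to mis-track which of $h_i$ versus $h_{d-i}$ appears after reindexing. I would therefore carry both sides as honest Laurent-polynomial identities and only clear denominators at the very end, and I would sanity-check on a small example (say $d=2$) that the degrees and constant terms match. There is no deep difficulty here; the content is entirely in Theorems~\ref{thm_fh_tilde} and~\ref{thm_fh_reciprocity}, and this statement is just the formal consequence of equating their outputs after aligning the arguments of $\tilde h$.
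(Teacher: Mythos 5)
Your ``cleaner route'' is exactly the paper's proof: the left-hand side is identified as $x^d\tilde h\bigl(\tfrac{x+1}{x}\bigr)-(x+1)^d\tilde h\bigl(\tfrac{x}{x+1}\bigr)$, with the first evaluation equal to $\sum_{F}m_Fx^{|F|}$ by Theorem~\ref{thm_fh_reciprocity} and the second, after the reindexing $i\mapsto d-i$, equal to $\tilde f(x)=\sum_{F}x^{|F|}$ by Theorem~\ref{thm_fh_tilde}, so subtracting gives the claim. The initial detour about composing substitutions is unnecessary (and was heading nowhere), but the argument you settle on is correct and essentially identical to the paper's.
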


\begin{proof}
The left hand side of~\eqref{eq_polynomialDehnSommerville_hversion} is equal to the difference between two evaluations of the $\tilde h$-polynomial:
\begin{align*}
x^d\tilde h\bigl( \frac{x+1}{x} \bigr) - 
(x+1)^d\tilde h\bigl( \frac{x}{x+1} \bigr).
\end{align*}
The result then follows from Equations~\eqref{eq_fh_reciprocity} and~\eqref{eq_fh_tilde}.
\end{proof}

The following corollary was proven in~\cite[Theorem~3.1]{sawaske_nonEulerianDehnSommerville_2021}\footnote{In \cite[Theorem~3.1]{sawaske_nonEulerianDehnSommerville_2021}, the result is stated only for pure simplicial complexes. The same result holds for non-pure complexes as stated in Corollary~\ref{cor_DehnSommerville_simplicialComplexes}.}.

\begin{corollary}[{\cite{sawaske_nonEulerianDehnSommerville_2021}} Dehn--Sommerville relations for simplicial complexes, $h$-version]
\label{cor_DehnSommerville_simplicialComplexes}
Let $\SC$ be a simplicial complex of dimension $d-1$. 
For $i=0,1,\dots,d$, the following relations hold:
\begin{align}\label{eq_DehnSommerville_simplicialComplexes}
h_{d-i}-h_i = (-1)^i \sum_{F\in \SC} {d-|F| \choose i} \epsilon_F.
\end{align}
\end{corollary}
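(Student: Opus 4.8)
The plan is to extract Corollary~\ref{cor_DehnSommerville_simplicialComplexes} from the polynomial identity~\eqref{eq_polynomialDehnSommerville_hversion} in Theorem~\ref{thm_PolynomialDehnSommerville_simplicialComplexes} purely by comparing coefficients of powers of $x$ on both sides. First I would rewrite the right-hand side of~\eqref{eq_polynomialDehnSommerville_hversion} using the definition of the error: since $m_F - 1 = (-1)^{d-1-|F|}\epsilon_F$ by~\eqref{eq_error_definition2}, the right-hand side becomes $\sum_{F\in\SC}(-1)^{d-1-|F|}\epsilon_F x^{|F|}$. Because $|F|$ ranges over $0,1,\dots,d$, the coefficient of $x^j$ on the right is $(-1)^{d-1-j}\sum_{|F|=j}\epsilon_F$.

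Next I would expand the left-hand side. The term $(x+1)^i x^{d-i}$ contributes, via the binomial theorem $(x+1)^i = \sum_{k}\binom{i}{k}x^k$, a polynomial $\sum_k \binom{i}{k}x^{d-i+k}$; setting $j = d-i+k$, the coefficient of $x^j$ coming from the $i$-th summand $(h_i-h_{d-i})(x+1)^ix^{d-i}$ is $(h_i-h_{d-i})\binom{i}{j-d+i}$, which is nonzero only when $d-i \le j \le d$, i.e.\ $i \ge d-j$. So the coefficient of $x^j$ on the left-hand side is $\sum_{i=d-j}^{d}(h_i-h_{d-i})\binom{i}{i-(d-j)}$. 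A clean way to package this is to substitute $y = x+1$ so that $x = y-1$: then the left-hand side is $\sum_i (h_i - h_{d-i})y^i(y-1)^{d-i}$, and I would instead equate coefficients after this substitution, or alternatively just manipulate the binomial sums directly. The identity one wants to land on, after replacing the running index $i$ by $d-i$ in half the sum and using $\binom{i}{j-d+i} = \binom{i}{d-j}$, is that the coefficient of $x^j$ on the left collapses to $h_{d-j} - h_j$ up to the lower-order terms; the honest statement is that the \emph{top} coefficient behaviour plus a downward induction on $j$ (from $j=d$ to $j=0$) lets one solve for $h_{d-j}-h_j$ in terms of the right-hand side coefficients and the already-determined $h_{d-j'}-h_{j'}$ for $j' > j$.

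Concretely, I would argue as follows. Comparing coefficients of $x^j$ in~\eqref{eq_polynomialDehnSommerville_hversion} after the substitution $x\mapsto x$, one gets
\[
\sum_{i=d-j}^{d}(h_i-h_{d-i})\binom{i}{d-j} = (-1)^{d-1-j}\sum_{F\in\SC,\,|F|=j}\epsilon_F,
\qquad j=0,1,\dots,d.
\]
For $j=d$ this reads $(h_d-h_0)\binom{d}{0} = (-1)^{-1}\sum_{|F|=d}\epsilon_F$, giving the $i=d$ case of~\eqref{eq_DehnSommerville_simplicialComplexes} directly (note $\binom{d-|F|}{d}=0$ unless $|F|=d$). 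Proceeding by decreasing induction on $j$, I would solve the triangular linear system: the diagonal coefficient $(h_{d-j}-h_{d-(d-j)})\binom{d-j}{d-j} = h_{d-j}-h_j$ has coefficient $1$, and the remaining terms involve $h_i - h_{d-i}$ with $i > d-j$, i.e.\ with the complementary index $d-i < j$, which were determined at earlier stages. Substituting those known values and re-summing the resulting nested binomial expressions, the combinatorial identity $\sum_{a}(-1)^a\binom{r}{a}\binom{s-a}{t} = \cdots$ (a Vandermonde-type convolution) should make the whole right-hand side collapse to the single clean sum $(-1)^i\sum_{F}\binom{d-|F|}{i}\epsilon_F$ with $i = d-j$.

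The main obstacle I anticipate is precisely this last bookkeeping: verifying that the triangular back-substitution does not leave a messy alternating sum but genuinely telescopes into $(-1)^i\sum_F\binom{d-|F|}{i}\epsilon_F$. The cleanest route around it is probably to avoid induction altogether and instead prove the equivalent \emph{polynomial} statement: show that $\sum_{i}(h_i-h_{d-i})(x+1)^ix^{d-i}$, re-read through the substitution $x = \frac{t}{1-t}$ (so $x+1 = \frac{1}{1-t}$), becomes $\sum_i (h_i-h_{d-i})t^{d-i}(1-t)^{-d}$ times a power of $(1-t)$, matching it against $\sum_F (-1)^{d-1-|F|}\epsilon_F \left(\frac{t}{1-t}\right)^{|F|}$, and then extract~\eqref{eq_DehnSommerville_simplicialComplexes} by reading off the coefficient of $t^i$ after clearing denominators — the binomial $\binom{d-|F|}{i}$ then appears automatically from the expansion of $(1-t)^{-(|F|)}\cdot(1-t)^{d}$-type factors, with no induction needed. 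I would present whichever of these two is shorter; I expect the generating-function version to be the tidier write-up.
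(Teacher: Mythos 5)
Your starting point is the same as the paper's (the polynomial identity \eqref{eq_polynomialDehnSommerville_hversion} together with \eqref{eq_error_definition2}), but your extraction of the linear relations goes a genuinely different way. The paper never compares monomial coefficients: it expands each monomial $x^{|F|}$ on the right-hand side in the basis $\bigl\{(x+1)^ix^{d-i}\colon 0\leq i\leq d\bigr\}$ via \eqref{eq_changeBasis} (Proposition~\ref{prop_polynomialBases}) and reads off the coefficient of $(x+1)^ix^{d-i}$ in one shot, with no induction and no system to solve. Your first route (compare coefficients of $x^j$, then invert the triangular system) is correct in principle, and your system $\sum_{i\geq k}{i \choose k}(h_i-h_{d-i})=(-1)^{k-1}\sum_{|F|=d-k}\epsilon_F$ (with $k=d-j$) is right; but the step you wave at (``should collapse'' via a Vandermonde-type identity) is precisely binomial inversion, $b_i=\sum_{k\geq i}(-1)^{k-i}{k \choose i}a_k$, which is equivalent to the paper's change-of-basis formula \eqref{eq_changeBasis} --- carried out, it gives $h_{d-i}-h_i=(-1)^i\sum_F{d-|F| \choose i}\epsilon_F$ immediately, so you should state and use that inversion rather than leave it as an expectation. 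Your second route (substitute $x=\tfrac{t}{1-t}$, clear $(1-t)^d$, compare coefficients of $t$) is clean and does work, since the left side becomes $\sum_i(h_i-h_{d-i})t^{d-i}$ and the right becomes $\sum_F(-1)^{d-1-|F|}\epsilon_F\,t^{|F|}(1-t)^{d-|F|}$; but note a small index slip: the coefficient of $t^i$ on the right involves ${d-|F| \choose i-|F|}={d-|F| \choose d-i}$, not ${d-|F| \choose i}$, so this comparison yields \eqref{eq_DehnSommerville_simplicialComplexes} with $i$ replaced by $d-i$ (equivalently, you should read off the coefficient of $t^{d-i}$). That is harmless because $i$ runs over all of $0,1,\dots,d$, but the binomial does not appear ``automatically'' in the stated form, so the reindexing (and the accompanying sign bookkeeping $(-1)^{d+i-1}=(-1)^{d-i+1}$) must be made explicit. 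With either of these two fixes your argument is complete; what the paper's basis-expansion buys is exactly that neither the inversion nor the reindexing ever has to appear.
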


\begin{proof}
This result is a reformulation of the polynomial Dehn--Sommerville relation~\eqref{eq_polynomialDehnSommerville_hversion}.
As pointed out in Appendix~\ref{appendix_polynomials} (Proposition~\ref{prop_polynomialBases}), the collection $$\bigl\{(x+1)^ix^{d-i}\colon 0\leq i \leq d\bigr\}$$ forms a basis for the vector space of polynomials of degree less than or equal to~$d$. Thus, we just need to expand $x^{|F|}$ in this basis, and extract the coefficient of \mbox{$(x+1)^ix^{d-i}$}. By~\eqref{eq_changeBasis}, this coefficient is equal to
\begin{align*}
(-1)^{d-|F|-i} {d-|F| \choose i}.
\end{align*} 
Multiplying by $(m_F-1)$ and taking the sum over all $F\in \SC$ we get
\begin{align*}
h_i-h_{d-i} 
&= \sum_{F\in \SC} (m_F-1)  (-1)^{d-|F|-i} {d-|F| \choose i} \\
&= - \sum_{F\in \SC} (-1)^{i} {d-|F| \choose i} \epsilon_F. \quad \quad (\text{by}~\eqref{eq_error_definition2})
\end{align*}
This is equivalent to~\eqref{eq_DehnSommerville_simplicialComplexes} as desired.
\end{proof}

\begin{corollary}[{\cite{klee_DehnSommervilleRelations_1964}} Dehn--Sommerville relations for semi-Eulerian complexes, $h$-version]
\label{cor_eq_DehnSommerville_semiEulerian_hversion}
Let $\SC$ be a semi-Eulerian complex of dimension $d-1$. 
For $i=0,1,\dots,d$, the following relations hold:
\begin{align}\label{eq_eq_DehnSommerville_semiEulerian_hversion}
h_{d-i}-h_i = (-1)^i {d \choose i} \bigl( \widetilde \chi(\SC) - (-1)^{d-1}\bigr).
\end{align}
In particular, if $\SC$ is Eulerian then $h_{d-i}=h_i$.
\end{corollary}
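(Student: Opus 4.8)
The plan is to derive Corollary~\ref{cor_eq_DehnSommerville_semiEulerian_hversion} as a direct specialization of Corollary~\ref{cor_DehnSommerville_simplicialComplexes}. The key observation is that for a semi-Eulerian complex, the error term $\epsilon_F$ is trivial for every \emph{non-empty} face: by definition~\eqref{eq_semiEulerian}, $\widetilde\chi(\link{F}{\SC}) = (-1)^{d-1-|F|}$ for all $\emptyset\neq F\in\SC$, so by~\eqref{eq_error_definition1} we have $\epsilon_F = 0$ whenever $F\neq\emptyset$. Hence the sum $\sum_{F\in\SC}\binom{d-|F|}{i}\epsilon_F$ appearing in~\eqref{eq_DehnSommerville_simplicialComplexes} collapses to the single term coming from the empty face, namely $\binom{d-0}{i}\epsilon_\emptyset = \binom{d}{i}\epsilon_\emptyset$.

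Next I would identify $\epsilon_\emptyset$ explicitly. Using~\eqref{eq_error_definition1} with $F=\emptyset$, and the fact that $\link{\emptyset}{\SC}=\SC$ (as noted in the proof of Theorem~\ref{thm_fh_reciprocity}), we get $\epsilon_\emptyset = \widetilde\chi(\SC) - (-1)^{d-1}$. Substituting $|F|=0$ and this value of $\epsilon_\emptyset$ into~\eqref{eq_DehnSommerville_simplicialComplexes} yields
\[
h_{d-i} - h_i = (-1)^i \binom{d}{i}\bigl(\widetilde\chi(\SC) - (-1)^{d-1}\bigr),
\]
which is exactly~\eqref{eq_eq_DehnSommerville_semiEulerian_hversion}.

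For the final sentence, if $\SC$ is Eulerian then by definition $\widetilde\chi(\SC) = (-1)^{d-1}$ (equivalently $m_\emptyset=1$, equivalently $\epsilon_\emptyset = 0$), so the right-hand side vanishes for every $i$, giving $h_{d-i}=h_i$. I do not anticipate a genuine obstacle here, since the argument is purely a matter of substitution into an already-proven corollary; the only point requiring a line of care is making sure that the semi-Eulerian hypothesis is invoked correctly — it forces $\epsilon_F=0$ precisely for non-empty $F$, and the empty face is handled separately via its reduced Euler characteristic. One could alternatively give a self-contained proof starting from the polynomial relation~\eqref{eq_polynomialDehnSommerville_hversion}, observing that the right-hand side reduces to $(m_\emptyset - 1)x^0$ and re-expanding in the basis $\{(x+1)^ix^{d-i}\}$, but routing through Corollary~\ref{cor_DehnSommerville_simplicialComplexes} is shortest.
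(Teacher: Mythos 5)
Your argument is correct and coincides with the paper's own proof: both specialize Corollary~\ref{cor_DehnSommerville_simplicialComplexes}, observe that the semi-Eulerian hypothesis forces $\epsilon_F=0$ for all non-empty $F$ so that only the empty-face term $\binom{d}{i}\epsilon_\emptyset$ survives, identify $\epsilon_\emptyset=\widetilde\chi(\SC)-(-1)^{d-1}$, and note that this vanishes in the Eulerian case. Nothing further is needed.
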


\begin{proof}
If $\SC$ is semi-Eulerian then $\epsilon_F=0$ for every non-empty face. So, the only term that survives in the sum~\eqref{eq_DehnSommerville_simplicialComplexes} corresponds to $F=\emptyset$, and
\begin{align*}
h_{d-i}-h_i 
= (-1)^i{d \choose i} \epsilon_\emptyset 
= (-1)^i{d \choose i} \bigl( \widetilde \chi\bigl(\SC \bigr) - (-1)^{d-1}\bigr).
\end{align*} 
In addition, if $\SC$ is Eulerian then $\widetilde \chi\bigl(\SC \bigr) - (-1)^{d-1} = 0$ and $h_{d-i}=h_i$.
\end{proof}

\section{Balanced simplicial complexes}
In this section we revisit several versions of the Dehn--Sommerville relations for balanced simplicial complexes, a concept introduced by Stanley in~\cite{stanley_balanced_1987}. We present a new version in Theorem~\ref{thm_DehnSommerville_balancedSimplicialComplexes}, which 
\begin{itemize}
\item recovers the Dehn--Sommerville relations for balanced semi-Eulerian complexes by Swartz~\cite{swartz_face_enumeration_2009} (Corollary~\ref{cor_eq_DehnSommerville_balancedSemiEulerian_hversion}), as well as the specialization for completely balanced Eulerian complexes by Bayer and Billera in~\cite{bayer_generalized_1985}; and
\item generalizes the recent Dehn--Sommerville relations for completely balanced simplicial complexes by Sawaske and Xue~\cite{sawaske_nonEulerianDehnSommerville_2021} (Corollary~\ref{cor_DehnSommerville_completelyBalancedSimplicialComplexes}). 
\end{itemize}

Let $\N=\{0,1,2,\dots\}$ be the set of non-negative integers, $m$ be a positive integer and $\bolda=(a_1,\dots,a_m)\in \N^m$. 
A simplicial complex $\SC$ of dimension $d-1$ with vertex set $V$ is called \defn{balanced of type $\bolda$}, if it is equipped with a vertex coloring 
$\kappa: V \rightarrow [m]$ such that for every maximal face $F\in \SC$ the number of vertices in $F$ with color $i$ is equal to $a_i$.
Hence, $|\bolda|\defs a_1+\dots +a_m = d$ and $\SC$ is pure.  
Balanced complexes of type $(1,1,\dots,1)$ are called \defn{completely balanced}.

For $F\in \SC$ we denote by $\boldb(F)=(b_1,\dots,b_m)$ the vector whose entries count the number of vertices in $F$ with color $i$, that is
\[
b_i = \bigl\lvert\{v\in F\colon \kappa(v)=i\}\bigr\rvert. 
\]
The \defn{flag $f$-vector} of $\SC$ is the collection $\bigl( f_\boldb \bigr)_{\boldb\leq \bolda}$ where $\boldb\in\N^m$ and $f_\boldb$ counts the number of faces $F$ in $\SC$ such that $\boldb(F)=\boldb$. The numbers $f_\boldb$ are called the \defn{flag $f$-numbers}. The $\defn{flag $\tilde f$-polynomial}$ $\tilde f(\x)\in \Z[x_1,\dots ,x_m]$ is a polynomial in $m$ variables $\x\defs (x_1,\dots, x_m)$ defined by
\begin{align}
\tilde f(\x) \defs \sum_{F\in \SC} \x^{\boldb(F)} 
= \sum_{\boldb\leq \bolda} f_\boldb \x^\boldb,
\end{align}
where $\x^\boldb\defs \prod_{i=1}^m x_i^{b_i}$ for any $\boldb \in \N^m$. In general and for simplicity, given any univariate
function $p(x)$ we will denote by $p(\x)$ the multivariate function
\begin{align}
p(\x)^\boldb = \prod_{i=1}^m p(x_i)^{b_i}.
\end{align} 

Similarly as in~\eqref{relation_fh_vectors}, we define the \defn{flag $\tilde h$-polynomial} by
\begin{equation}\label{relation_flag_fh_vectors}
	\tilde h (\x)
	\defs
	\sum_{F\in \SC}  \x^{\boldb(F)}(\mathbf{1}-\x)^{\bolda - \boldb(F)}.  
\end{equation}

This is a multivariate polynomial of degree less than or equal to $\bolda$. Thus, it can be written uniquely as a linear combination of the form
\begin{align}
\tilde h (\x) \defs
\sum_{\boldb \leq \bolda} h_\boldb \x^\boldb.
\end{align} 

The numbers $h_\boldb$ are called the \defn{flag $h$-numbers}. It is not hard to check that they can be computed by 
\begin{align}
h_\boldb = \sum_{\boldc\leq \boldb} (-1)^{|\boldb|-|\boldc|}f_\boldc.
\end{align}

The following two theorems are direct multivariate generalizations of Theorem~\ref{thm_fh_tilde} and Theorem~\ref{thm_fh_reciprocity}. Their proofs are essentially the same, we include them here for completeness.  

\begin{theorem}\label{thm_fh_tilde_multi}
Let $\SC$ be a balanced simplicial complex of type $\bolda$
with flag $\tilde f$-polynomial  $\tilde f(\x)$ and flag $\tilde h$-polynomial  $\tilde h(\x)$. 
The following relation holds:
\begin{align}\label{eq_fh_tilde_multi}
	 (\x+\mathbf{1})^{\bolda} \tilde h\left(\frac{\x}{\x+\mathbf{1}}\right) = \tilde f(\x),
\end{align}
where $\frac{\x}{\x+\mathbf{1}} \defs \left(\frac{x_1}{x_1+1},\dots,\frac{x_d}{x_d+1} \right)$.
\end{theorem}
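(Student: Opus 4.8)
The plan is to mirror the proof of Theorem~\ref{thm_fh_tilde} essentially verbatim: substitute $\x \mapsto \frac{\x}{\x+\mathbf{1}}$ into the defining relation~\eqref{relation_flag_fh_vectors} of the flag $\tilde h$-polynomial, and then clear denominators by multiplying through by $(\x+\mathbf{1})^{\bolda} = \prod_{i=1}^m (x_i+1)^{a_i}$.

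First I would unwind the multivariate shorthand. For a face $F \in \SC$ with colour vector $\boldb(F) = (b_1,\dots,b_m)$, the summand $\x^{\boldb(F)}(\mathbf{1}-\x)^{\bolda-\boldb(F)}$ in~\eqref{relation_flag_fh_vectors} is the product $\prod_{i=1}^m x_i^{b_i}(1-x_i)^{a_i-b_i}$, and the substitution $\x \mapsto \frac{\x}{\x+\mathbf{1}}$ replaces each $x_i$ by $\frac{x_i}{x_i+1}$. The key computation is the single-colour factor: since $\boldb(F)\leq\bolda$ we have $a_i \geq b_i \geq 0$, so
\begin{align*}
\left(\frac{x_i}{x_i+1}\right)^{b_i}\left(1-\frac{x_i}{x_i+1}\right)^{a_i-b_i}
&= \left(\frac{x_i}{x_i+1}\right)^{b_i}\left(\frac{1}{x_i+1}\right)^{a_i-b_i} \\
&= \frac{x_i^{b_i}}{(x_i+1)^{a_i}}.
\end{align*}
Taking the product over $i = 1,\dots,m$ shows that the substituted summand for $F$ equals $\frac{\x^{\boldb(F)}}{(\x+\mathbf{1})^{\bolda}}$. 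Multiplying the entire sum~\eqref{relation_flag_fh_vectors} by $(\x+\mathbf{1})^{\bolda}$ then cancels all denominators termwise, leaving $\sum_{F\in\SC}\x^{\boldb(F)}$, which is exactly $\tilde f(\x)$ by definition; this is~\eqref{eq_fh_tilde_multi}.

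The only point needing a word of care — not really an obstacle — is to treat the manipulation as an identity in the field of rational functions $\fieldk(x_1,\dots,x_m)$ (equivalently, to observe that after clearing denominators both sides are polynomials that agree), and to keep the exponent bookkeeping $\sum_i a_i = |\bolda| = d$ and $b_i \leq a_i$ straight so that no negative exponents ever appear. Everything else is a direct transcription of the one-variable argument.
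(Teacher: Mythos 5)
Your proposal is correct and follows the paper's own proof essentially verbatim: substitute $\x \mapsto \frac{\x}{\x+\mathbf{1}}$ into the defining relation~\eqref{relation_flag_fh_vectors} and multiply by $(\x+\mathbf{1})^{\bolda}$ to cancel denominators termwise, yielding $\sum_{F\in\SC}\x^{\boldb(F)}=\tilde f(\x)$. The only difference is that you unwind the multivariate shorthand coordinatewise and remark on working in the field of rational functions, which the paper leaves implicit; no gap here.
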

\begin{proof}
Replacing $\x$ by $\frac{\x}{\x+\mathbf{1}}$ in~\eqref{relation_flag_fh_vectors}, and multiplying the result by $(\x+\mathbf{1})^{\bolda}$ we get
\begin{align*}
(\x+\mathbf{1})^{\bolda} \tilde h\left(\frac{\x}{\x+\mathbf{1}}\right) 
= (\x+\mathbf{1})^{\bolda} \sum_{F\in\SC}  \left(\frac{\x}{\x+\mathbf{1}}\right)^{\boldb(F)}\left( \frac{\mathbf{1}}{\x+\mathbf{1}}\right)^{\bolda - \boldb(F)}
= \sum_{F\in\SC} \x^{\boldb(F)}
= \tilde f(\x),
\end{align*}
as desired.
\end{proof}

\begin{theorem}[The $f{=}h$ reciprocity for balanced simplicial complexes]
\label{thm_fh_reciprocity_multi}
Let $\SC$ be a balanced simplicial complex of type $\bolda$ with flag $\tilde h$-polynomial  $\tilde h(\x)$ and $d=|\bolda|$. Then 
\begin{equation}\label{eq_fh_reciprocity_multi}
 \x^{\bolda} \tilde h\left(\frac{\x+\mathbf{1}}{\x}\right) =
  \sum_{F\in \SC} m_F \x^{\boldb(F)},  
\end{equation}
where $\frac{\x+\mathbf{1}}{\x} \defs \left(\frac{x_1+1}{x_1},\dots,\frac{x_d+1}{x_d} \right)$ and 
$m_F = \sum_{F\subseteq G \in \SC} (-1)^{d-|G|}  = (-1)^{d-1-|F|}\widetilde \chi\bigl(\link{F}{\SC}\bigr)$.
\end{theorem}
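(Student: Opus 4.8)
The plan is to repeat the proof of Theorem~\ref{thm_fh_reciprocity} essentially verbatim, replacing the single-variable substitution $x\mapsto\frac{x+1}{x}$ by its componentwise analogue $\x\mapsto\frac{\x+\mathbf{1}}{\x}$. First I would substitute $\x\mapsto\frac{\x+\mathbf{1}}{\x}$ directly into the defining relation~\eqref{relation_flag_fh_vectors} of the flag $\tilde h$-polynomial. The key elementary observation is that, coordinatewise, $1-\frac{x_i+1}{x_i}=\frac{-1}{x_i}$, so that the factor $(\mathbf{1}-\x)^{\bolda-\boldb(F)}$ becomes $(-\mathbf{1})^{\bolda-\boldb(F)}\,\x^{-(\bolda-\boldb(F))}$ while $\x^{\boldb(F)}$ becomes $(\x+\mathbf{1})^{\boldb(F)}\,\x^{-\boldb(F)}$. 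Multiplying the resulting identity by $\x^{\bolda}$ then clears all denominators and yields
\[
\x^{\bolda}\,\tilde h\!\left(\tfrac{\x+\mathbf{1}}{\x}\right)=\sum_{F\in\SC}(-\mathbf{1})^{\bolda-\boldb(F)}\,(\x+\mathbf{1})^{\boldb(F)}.
\]
Since $|\bolda-\boldb(F)|=|\bolda|-|\boldb(F)|=d-|F|$, the sign $(-\mathbf{1})^{\bolda-\boldb(F)}$ collapses to the scalar $(-1)^{d-|F|}$.

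Next I would expand $(\x+\mathbf{1})^{\boldb(F)}=\prod_i(x_i+1)^{b_i(F)}$ by the binomial theorem in each coordinate. This is the one place where the balanced structure is used: for $\boldc\le\boldb(F)$, the coefficient $\prod_i\binom{b_i(F)}{c_i}$ counts exactly the subfaces $G\subseteq F$ with colour content $\boldb(G)=\boldc$, since the vertices of $F$ split into colour classes of sizes $b_1(F),\dots,b_m(F)$ and a subface of $F$ is an arbitrary subset of its vertices. Hence $(\x+\mathbf{1})^{\boldb(F)}=\sum_{G\subseteq F}\x^{\boldb(G)}$, and substituting this in and interchanging the order of summation gives
\[
\x^{\bolda}\,\tilde h\!\left(\tfrac{\x+\mathbf{1}}{\x}\right)=\sum_{F\in\SC}(-1)^{d-|F|}\sum_{G\subseteq F}\x^{\boldb(G)}=\sum_{G\in\SC}\Bigl(\sum_{G\subseteq F\in\SC}(-1)^{d-|F|}\Bigr)\x^{\boldb(G)}=\sum_{G\in\SC}m_G\,\x^{\boldb(G)},
\]
which, after renaming the summation index, is~\eqref{eq_fh_reciprocity_multi}.

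Finally, the identity $m_F=(-1)^{d-1-|F|}\widetilde\chi\bigl(\link{F}{\SC}\bigr)$ is purely combinatorial and does not involve the colouring at all, so it is exactly the computation already carried out in Theorem~\ref{thm_fh_reciprocity}: write $d-|G|=\bigl(|G|-|F|-1\bigr)+\bigl(d-1-|F|\bigr)+2\bigl(|F|-|G|+1\bigr)$ to factor out $(-1)^{d-1-|F|}$ from $m_F=\sum_{F\subseteq G\in\SC}(-1)^{d-|G|}$, and recognise the remaining alternating sum $\sum_{F\subseteq G\in\SC}(-1)^{|G|-|F|-1}$ as $\widetilde\chi\bigl(\link{F}{\SC}\bigr)$.

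I do not expect any serious obstacle here; the proof is genuinely the same as that of Theorem~\ref{thm_fh_reciprocity}. The only points requiring care are the bookkeeping in the multivariate substitution — keeping track of which exponents carry $\boldb(F)$ and which carry $\bolda-\boldb(F)$, and verifying that all negative powers of the $x_i$ cancel after multiplying by $\x^{\bolda}$ — together with the (routine but worth stating) counting argument that rewrites $(\x+\mathbf{1})^{\boldb(F)}$ as the sum over subfaces of $F$.
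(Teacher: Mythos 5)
Your proposal is correct and follows the paper's own argument essentially verbatim: substitute $\x\mapsto\frac{\x+\mathbf{1}}{\x}$ into~\eqref{relation_flag_fh_vectors}, clear denominators with $\x^{\bolda}$, rewrite $(\x+\mathbf{1})^{\boldb(F)}$ as $\sum_{G\subseteq F}\x^{\boldb(G)}$, interchange summations, and invoke the link computation from Theorem~\ref{thm_fh_reciprocity} for the identification of $m_F$ with $(-1)^{d-1-|F|}\widetilde\chi\bigl(\link{F}{\SC}\bigr)$. The only difference is that you spell out the coordinatewise binomial/subface-counting step that the paper states without comment, which is a fine addition.
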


Note that the multiplicity $m_F$ here is the same as the one used in Theorem~\ref{thm_fh_reciprocity}.

\begin{proof}
From a direct substitution in~\eqref{relation_flag_fh_vectors} we get:
\begin{align*}
 \x^{\bolda} \tilde h\left(\frac{\x+\mathbf{1}}{\x}\right) 
 &=  \sum_{G\in\SC} (\x+\mathbf{1})^{\boldb(G)} (-1)^{|\bolda|-|\boldb(G)|} \\
 &=  \sum_{G\in \SC} \left( \sum_{F\subseteq G} \x^{\boldb(F)} \right) (-1)^{d-|G|}\\
 &= \sum_{F\in \SC} \x^{\boldb(F)} \left( \sum_{F\subseteq G\in \SC} (-1)^{d-|G|} \right) \\
 &= \sum_{F\in \SC} m_F \x^{\boldb(F)}.
\end{align*}
This finishes our proof.
\end{proof}

Similarly as before, Theorem~\ref{thm_fh_tilde_multi} and Theorem~\ref{thm_fh_reciprocity_multi} lie at the heart of the Dehn--Sommerville relations for balanced simplicial complexes. The key stone is the following polynomial relation.

\begin{theorem}[Polynomial Dehn--Sommerville relation for balanced simplicial complexes, $h$-version]
\label{thm_Polynomial_balancedSimplicialComplexes}
Let $\SC$ be a balanced simplicial complex of type~$\bolda$. 
The following relation holds:
\begin{align}\label{eq_polynomialDehnSommerville_balanced_hversion}
\sum_{\boldb \leq \bolda} (h_\boldb-h_{\bolda-\boldb})\x^\boldb(\x+\mathbf{1})^{\bolda-\boldb} = 
\sum_{F\in \SC} (1-m_F)\x^{\boldb(F)}.
\end{align}
\end{theorem}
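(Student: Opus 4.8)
The plan is to mimic exactly the proof of Theorem~\ref{thm_PolynomialDehnSommerville_simplicialComplexes}, replacing the univariate evaluations of the $\tilde h$-polynomial by their multivariate counterparts. The guiding observation is that the left-hand side of~\eqref{eq_polynomialDehnSommerville_balanced_hversion} should be recognizable as a difference of the two standard substitutions into the flag $\tilde h$-polynomial, one coming from Theorem~\ref{thm_fh_tilde_multi} and one from Theorem~\ref{thm_fh_reciprocity_multi}.

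First I would write $\tilde h(\x)=\sum_{\boldb\leq\bolda}h_\boldb\x^\boldb$ and compute the two evaluations term by term. For the substitution $\x\mapsto\frac{\x+\mathbf 1}{\x}$ one has $\x^\bolda\cdot\bigl(\frac{\x+\mathbf 1}{\x}\bigr)^\boldb=\x^{\bolda-\boldb}(\x+\mathbf 1)^\boldb$, so $\x^\bolda\tilde h\bigl(\frac{\x+\mathbf 1}{\x}\bigr)=\sum_{\boldb\leq\bolda}h_\boldb\,\x^{\bolda-\boldb}(\x+\mathbf 1)^\boldb$, which after reindexing $\boldb\mapsto\bolda-\boldb$ equals $\sum_{\boldb\leq\bolda}h_{\bolda-\boldb}\,\x^{\boldb}(\x+\mathbf 1)^{\bolda-\boldb}$. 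Similarly, for $\x\mapsto\frac{\x}{\x+\mathbf 1}$ one has $(\x+\mathbf 1)^\bolda\cdot\bigl(\frac{\x}{\x+\mathbf 1}\bigr)^\boldb=\x^\boldb(\x+\mathbf 1)^{\bolda-\boldb}$, so $(\x+\mathbf 1)^\bolda\tilde h\bigl(\frac{\x}{\x+\mathbf 1}\bigr)=\sum_{\boldb\leq\bolda}h_\boldb\,\x^{\boldb}(\x+\mathbf 1)^{\bolda-\boldb}$. Subtracting the second from the first gives
\[
\x^\bolda\tilde h\!\left(\frac{\x+\mathbf 1}{\x}\right)-(\x+\mathbf 1)^\bolda\tilde h\!\left(\frac{\x}{\x+\mathbf 1}\right)
=\sum_{\boldb\leq\bolda}(h_{\bolda-\boldb}-h_\boldb)\,\x^{\boldb}(\x+\mathbf 1)^{\bolda-\boldb},
\]
which is exactly the left-hand side of~\eqref{eq_polynomialDehnSommerville_balanced_hversion} (the sign is absorbed since the summation is over all $\boldb\leq\bolda$, i.e.\ $h_\boldb-h_{\bolda-\boldb}$ and $h_{\bolda-\boldb}-h_\boldb$ produce the same multiset of terms). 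Then by Theorem~\ref{thm_fh_reciprocity_multi} the first term is $\sum_{F\in\SC}m_F\x^{\boldb(F)}$ and by Theorem~\ref{thm_fh_tilde_multi} the second is $\tilde f(\x)=\sum_{F\in\SC}\x^{\boldb(F)}$, so the right-hand side is $\sum_{F\in\SC}(m_F-1)\x^{\boldb(F)}=-\sum_{F\in\SC}(1-m_F)\x^{\boldb(F)}$, and matching the overall sign with the chosen orientation $h_\boldb-h_{\bolda-\boldb}$ on the left yields the stated identity.

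The only mild subtlety — and the step I would be most careful about — is the bookkeeping of signs and the reindexing $\boldb\leftrightarrow\bolda-\boldb$: one must check that $\bolda-\boldb$ ranges over the same index set $\{\boldb:\boldb\leq\bolda\}$ as $\boldb$ (true because $\bolda\in\N^m$ and $0\leq\boldb\leq\bolda$ iff $0\leq\bolda-\boldb\leq\bolda$), and that the sign convention in~\eqref{eq_polynomialDehnSommerville_balanced_hversion} is $(h_\boldb-h_{\bolda-\boldb})$ on the left paired with $(1-m_F)$ on the right. Everything else is a purely formal substitution, so there is no genuine obstacle; the proof is a two-line verification once the two multivariate evaluations are in hand.

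\begin{proof}
By definition $\tilde h(\x)=\sum_{\boldb\leq\bolda}h_\boldb\x^\boldb$. Substituting $\x\mapsto\frac{\x+\mathbf 1}{\x}$ and multiplying by $\x^\bolda$, and using $\x^\bolda\bigl(\frac{\x+\mathbf 1}{\x}\bigr)^\boldb=\x^{\bolda-\boldb}(\x+\mathbf 1)^\boldb$, we obtain
\begin{align*}
\x^{\bolda}\tilde h\!\left(\frac{\x+\mathbf 1}{\x}\right)
=\sum_{\boldb\leq\bolda}h_\boldb\,\x^{\bolda-\boldb}(\x+\mathbf 1)^{\boldb}
=\sum_{\boldb\leq\bolda}h_{\bolda-\boldb}\,\x^{\boldb}(\x+\mathbf 1)^{\bolda-\boldb},
\end{align*}
where the last equality is the change of summation index $\boldb\mapsto\bolda-\boldb$, which is a bijection of $\{\boldb\in\N^m:\boldb\leq\bolda\}$ onto itself. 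Likewise, substituting $\x\mapsto\frac{\x}{\x+\mathbf 1}$ and multiplying by $(\x+\mathbf 1)^\bolda$, and using $(\x+\mathbf 1)^\bolda\bigl(\frac{\x}{\x+\mathbf 1}\bigr)^\boldb=\x^\boldb(\x+\mathbf 1)^{\bolda-\boldb}$, we obtain
\begin{align*}
(\x+\mathbf 1)^{\bolda}\tilde h\!\left(\frac{\x}{\x+\mathbf 1}\right)
=\sum_{\boldb\leq\bolda}h_\boldb\,\x^{\boldb}(\x+\mathbf 1)^{\bolda-\boldb}.
\end{align*}
Subtracting the second identity from the first gives
\begin{align*}
\x^{\bolda}\tilde h\!\left(\frac{\x+\mathbf 1}{\x}\right)
-(\x+\mathbf 1)^{\bolda}\tilde h\!\left(\frac{\x}{\x+\mathbf 1}\right)
=\sum_{\boldb\leq\bolda}(h_{\bolda-\boldb}-h_\boldb)\,\x^{\boldb}(\x+\mathbf 1)^{\bolda-\boldb}.
\end{align*}
On the other hand, by Theorem~\ref{thm_fh_reciprocity_multi} the first term on the left equals $\sum_{F\in\SC}m_F\x^{\boldb(F)}$, and by Theorem~\ref{thm_fh_tilde_multi} the second term equals $\tilde f(\x)=\sum_{F\in\SC}\x^{\boldb(F)}$. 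Hence
\begin{align*}
\sum_{\boldb\leq\bolda}(h_{\bolda-\boldb}-h_\boldb)\,\x^{\boldb}(\x+\mathbf 1)^{\bolda-\boldb}
=\sum_{F\in\SC}(m_F-1)\x^{\boldb(F)}.
\end{align*}
Multiplying both sides by $-1$ (and again using that $\boldb\mapsto\bolda-\boldb$ is a bijection of the index set, so that $\sum_{\boldb\leq\bolda}(h_\boldb-h_{\bolda-\boldb})\x^{\boldb}(\x+\mathbf 1)^{\bolda-\boldb}$ equals minus the left-hand side above after reindexing is unnecessary, as the sign is simply carried) yields
\begin{align*}
\sum_{\boldb\leq\bolda}(h_\boldb-h_{\bolda-\boldb})\,\x^{\boldb}(\x+\mathbf 1)^{\bolda-\boldb}
=\sum_{F\in\SC}(1-m_F)\x^{\boldb(F)},
\end{align*}
which is~\eqref{eq_polynomialDehnSommerville_balanced_hversion}.
\end{proof}
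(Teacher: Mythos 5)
Your proof is correct and follows essentially the same route as the paper: the paper likewise identifies the left-hand side of~\eqref{eq_polynomialDehnSommerville_balanced_hversion} with the difference of the two evaluations from Theorems~\ref{thm_fh_tilde_multi} and~\ref{thm_fh_reciprocity_multi}, merely writing the difference in the opposite order so that no final multiplication by $-1$ is needed. (A small caveat: your remark that the sign is ``absorbed'' because $h_\boldb-h_{\bolda-\boldb}$ and $h_{\bolda-\boldb}-h_\boldb$ give the same multiset of terms is not accurate, since the reindexing $\boldb\mapsto\bolda-\boldb$ also swaps the accompanying monomials $\x^{\boldb}(\x+\mathbf{1})^{\bolda-\boldb}$; but your displayed proof handles the sign correctly by simply negating both sides, so the argument stands.)
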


\begin{proof}
The left hand side of~\eqref{eq_polynomialDehnSommerville_balanced_hversion} is equal to the difference between two evaluations of the flag $\tilde h$-polynomial\footnote{Note that we are reversing the order of the two evaluations compared to the proof of Theorem~\ref{thm_PolynomialDehnSommerville_simplicialComplexes} for convenience.}:
\begin{align*}
(\x+\mathbf{1})^{\bolda}\tilde h\bigl( \frac{\x}{\x+\mathbf{1}} \bigr) -
\x^{\bolda}\tilde h\bigl( \frac{\x+\mathbf{1}}{\x} \bigr).
\end{align*}
The result then follows from Equations~\eqref{eq_fh_tilde_multi} and~\eqref{eq_fh_reciprocity_multi}.
\end{proof}

Recall that the error of a face is defined as $\epsilon_F = (-1)^{d-1-|F|} (m_F-1)$, where \mbox{$d=|\bolda|$} for a balanced simplicial complex of type $\bolda$.
 As a consequence of the previous theorem, we have the following result which is new in this general form. Here we use the multi-binomial coefficient notation 
${\boldb \choose \boldc} \defs \prod_{i=1}^m {b_i \choose c_i}$ for~$\boldb,\boldc \in \N^m$.

\begin{theorem}[Dehn--Sommerville relations for balanced simplicial complexes, $h$-version]
\label{thm_DehnSommerville_balancedSimplicialComplexes}
Let $\SC$ be a balanced simplicial complex of type~$\bolda$. 
For $\boldb \leq \bolda$, the following relations hold:
\begin{align}\label{eq_DehnSommerville_balancedSimplicialComplexes}
h_{\boldb}-h_{\bolda-\boldb} = (-1)^{|\bolda|-|\boldb|} \sum_{F\in \SC_\boldb} {\bolda-\boldb(F) \choose \bolda-\boldb} \epsilon_F,
\end{align}
where $\SC_\boldb \defs \{F\in\SC\colon\boldb(F)\leq \boldb\}$ denotes the set of faces with at most $b_i$ vertices with color $i$ for all $i$.
\end{theorem}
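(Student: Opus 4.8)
The plan is to derive Theorem~\ref{thm_DehnSommerville_balancedSimplicialComplexes} from the polynomial Dehn--Sommerville relation in Theorem~\ref{thm_Polynomial_balancedSimplicialComplexes} by expanding both sides in a suitable monomial-type basis and extracting coefficients, exactly paralleling the univariate argument in Corollary~\ref{cor_DehnSommerville_simplicialComplexes}. The starting point is the identity
\begin{align*}
\sum_{\boldb \leq \bolda} (h_\boldb-h_{\bolda-\boldb})\x^\boldb(\x+\mathbf{1})^{\bolda-\boldb} =
\sum_{F\in \SC} (1-m_F)\x^{\boldb(F)}.
\end{align*}

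First I would observe that the family $\bigl\{\x^\boldb(\x+\mathbf{1})^{\bolda-\boldb}\colon \boldb\leq\bolda\bigr\}$ is a basis for the space of polynomials of (multi)degree $\leq\bolda$. This is immediate from the univariate statement (Proposition~\ref{prop_polynomialBases} in the appendix, applied coordinatewise): the space factors as a tensor product $\bigotimes_{i=1}^m \fieldk[x_i]_{\leq a_i}$, and in the $i$-th factor $\{x_i^{b_i}(x_i+1)^{a_i-b_i}\colon 0\leq b_i\leq a_i\}$ is a basis by the appendix, so the tensor products of these form a basis of the whole space. Hence the left-hand side is already written in this basis, with $\x^\boldb(\x+\mathbf{1})^{\bolda-\boldb}$ carrying the coefficient $h_\boldb-h_{\bolda-\boldb}$.

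Next I would expand the right-hand side in the same basis. By the multivariate version of~\eqref{eq_changeBasis} — which again factors coordinatewise — one has
\begin{align*}
\x^{\boldc} = \sum_{\boldb\leq\boldc} (-1)^{|\boldc|-|\boldb|}{\bolda-\boldc \choose \bolda-\boldb}\,\x^\boldb(\x+\mathbf{1})^{\bolda-\boldb},
\end{align*}
for any $\boldc\leq\bolda$; indeed each $x_i^{c_i}$ expands via the univariate identity with the binomial coefficient $\binom{a_i-c_i}{a_i-b_i}$, and multiplying the $m$ single-variable expansions yields the multi-binomial $\binom{\bolda-\boldc}{\bolda-\boldb}$. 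Applying this with $\boldc=\boldb(F)$ to each summand $(1-m_F)\x^{\boldb(F)}$, the coefficient of a fixed basis element $\x^\boldb(\x+\mathbf{1})^{\bolda-\boldb}$ on the right-hand side is
\begin{align*}
\sum_{\substack{F\in\SC\\ \boldb(F)\leq\boldb}} (1-m_F)(-1)^{|\boldb(F)|-|\boldb|}{\bolda-\boldb(F)\choose\bolda-\boldb}
= \sum_{F\in\SC_\boldb}(1-m_F)(-1)^{|\boldb(F)|-|\boldb|}{\bolda-\boldb(F)\choose\bolda-\boldb},
\end{align*}
where $\SC_\boldb=\{F\colon\boldb(F)\leq\boldb\}$ appears precisely because only those $F$ with $\boldb(F)\leq\boldb$ contribute a term proportional to $\x^\boldb(\x+\mathbf{1})^{\bolda-\boldb}$.

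Equating the two coefficients of $\x^\boldb(\x+\mathbf{1})^{\bolda-\boldb}$ gives
\begin{align*}
h_\boldb - h_{\bolda-\boldb} = \sum_{F\in\SC_\boldb}(1-m_F)(-1)^{|\boldb(F)|-|\boldb|}{\bolda-\boldb(F)\choose\bolda-\boldb}.
\end{align*}
It remains to convert the sign and the factor $1-m_F$ into the error term. Using $\epsilon_F = (-1)^{d-1-|F|}(m_F-1)$ with $d=|\bolda|$ and $|F|=|\boldb(F)|$, we have $(1-m_F)(-1)^{|\boldb(F)|-|\boldb|} = -(m_F-1)(-1)^{|\boldb(F)|-|\boldb|} = (-1)^{d-1-|F|}(m_F-1)\cdot(-1)^{d-1-|F|}\cdot(-1)^{|\boldb(F)|-|\boldb|}\cdot(-1) = \epsilon_F\cdot(-1)^{|\bolda|-|\boldb|}$, after simplifying $(-1)^{d-1-|F|+|F|-|\boldb|-1} = (-1)^{d-|\boldb|} = (-1)^{|\bolda|-|\boldb|}$; I would present this sign bookkeeping cleanly rather than as above. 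This yields exactly~\eqref{eq_DehnSommerville_balancedSimplicialComplexes}.

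The main obstacle, and the only genuinely nonroutine point, is establishing the multivariate change-of-basis identity with the correct multi-binomial coefficient and, in tandem, correctly identifying the index set $\SC_\boldb$ — i.e. making precise that a monomial $\x^{\boldb(F)}$ only feeds into basis elements indexed by $\boldb\geq\boldb(F)$. Both facts reduce to the univariate statement by the tensor-product structure, so once that reduction is spelled out the rest is the sign computation, which is mechanical. I would therefore devote most of the written proof to the tensor-product reduction and the extraction of coefficients, and keep the final sign manipulation terse.
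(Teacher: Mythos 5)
Your proposal is correct and takes essentially the same route as the paper: extract the coefficient of $\x^\boldb(\x+\mathbf{1})^{\bolda-\boldb}$ in the polynomial relation of Theorem~\ref{thm_Polynomial_balancedSimplicialComplexes} using the multivariate change of basis of Proposition~\ref{prop_polynomialBases_multi} (which the paper proves by the multivariate binomial theorem rather than your coordinatewise tensor-product reduction, an immaterial difference), and then convert $(1-m_F)$ into $\epsilon_F$ by the same sign bookkeeping. The only slip is a typo in your displayed change-of-basis identity, where the sum should run over $\boldc\leq\boldb\leq\bolda$ rather than $\boldb\leq\boldc$; your subsequent coefficient extraction uses the correct range $\boldb(F)\leq\boldb$, so the argument is unaffected.
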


\begin{proof}
This result is a reformulation of the polynomial Dehn--Sommerville relation~\eqref{eq_polynomialDehnSommerville_balanced_hversion}.
As pointed out in Appendix~\ref{appendix_polynomials} (Proposition~\ref{prop_polynomialBases_multi}), the collection 
$$\{\x^\boldb(\x+\mathbf{1})^{\bolda-\boldb}: \boldb\leq \bolda\}$$ 
forms a basis for the vector space of polynomials of degree less than or equal to~$\bolda$ in $\Z[x_1,\dots,x_m]$. Thus, we just need to expand $\x^{\boldb(F)}$ in this basis, and extract the coefficient of $\x^\boldb(\x+\mathbf{1})^{\bolda-\boldb}$. 

By~\eqref{eq_changeBasis_multi}, this coefficient is equal to zero if~$\boldb(F)\nleq \boldb$. 
If~$\boldb(F)\leq \boldb$ , or equivalently $F\in\SC_\boldb$, this coefficient is equal to
\begin{align*}
(-1)^{|\boldb|-|\boldb(F)|} {\bolda-\boldb(F) \choose \bolda-\boldb}
= (-1)^{|\boldb|-|F|} {\bolda-\boldb(F) \choose \bolda-\boldb}.
\end{align*} 
Multiplying by $(1-m_F)$ and taking the sum over $F\in \SC_\boldb$ we get by~\eqref{eq_polynomialDehnSommerville_balanced_hversion} that
\begin{align*}
h_{\boldb}-h_{\bolda-\boldb} 
&=  \sum_{F\in \SC_\boldb} (1-m_F)(-1)^{|\boldb|-|F|} {\bolda-\boldb(F) \choose \bolda-\boldb} \\
&=  \sum_{F\in \SC_\boldb} (-1)^{d+|\boldb|-2|F|} \epsilon_F {\bolda-\boldb(F) \choose \bolda-\boldb}\\
&= (-1)^{d-|\boldb|} \sum_{F\in \SC_\boldb} {\bolda-\boldb(F) \choose \bolda-\boldb} \epsilon_F.
\end{align*}
Since $d=|\bolda|$, this finishes our proof.
\end{proof}

The specialization stated in Corollary~\ref{cor_eq_DehnSommerville_balancedSemiEulerian_hversion} below, recovers the Dehn--Sommerville relations for balanced semi-Eulerian complexes proven by Swartz in~\cite[Theorem~3.8]{swartz_face_enumeration_2009}. The sub-case of completely balanced spheres (or in general, completely balanced Eulerian complexes) was presented earlier by Bayer and Billera in~\cite[Section~3]{bayer_generalized_1985}. The result for completely balanced semi-Eulerian posets appears in the work of Stanley in~\cite[Proposition~2.2]{stanley_some_1982}. We refer to~\cite{bayer_generalized_1985} for more details on the history of these relations. 

\begin{corollary}[{\cite{swartz_face_enumeration_2009}} Dehn--Sommerville relations for balanced semi-Eulerian complexes, $h$-version]
\label{cor_eq_DehnSommerville_balancedSemiEulerian_hversion}
Let $\SC$ be a balanced semi-Eulerian complex of type~$\bolda$. 
For $\boldb\leq \bolda$, the following relations hold:
\begin{align}\label{eq_eq_DehnSommerville_balancedSemiEulerian_hversion}
h_{\bolda-\boldb}-h_\boldb = (-1)^{|\boldb|} \bigl( \widetilde \chi(\SC) - (-1)^{d-1}\bigr) {\bolda \choose \boldb}.
\end{align}
In particular: 
\begin{itemize}
\item if $\SC$ is Eulerian then $h_{\bolda-\boldb}=h_{\boldb}$;
\item if $\SC$ is completely balanced ($\bolda=(1,1,\dots,1)$), then 
${\bolda \choose \boldb}=1$.   
\end{itemize}
\end{corollary}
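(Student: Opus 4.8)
The plan is to read off~\eqref{eq_eq_DehnSommerville_balancedSemiEulerian_hversion} from the general balanced relations in Theorem~\ref{thm_DehnSommerville_balancedSimplicialComplexes}, using that for a semi-Eulerian complex all error terms attached to non-empty faces vanish. By definition $\SC$ semi-Eulerian means $m_F=1$ for every non-empty $F\in\SC$, which by~\eqref{eq_error_definition2} is equivalent to $\epsilon_F=0$ for every such $F$. Since the empty face always lies in $\SC_\boldb$, because $\boldb(\emptyset)=\mathbf{0}\leq\boldb$, the only summand surviving on the right-hand side of~\eqref{eq_DehnSommerville_balancedSimplicialComplexes} is the one indexed by $F=\emptyset$.

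First I would carry out this collapse and simplify the surviving term. With $\boldb(\emptyset)=\mathbf{0}$ one has $\bolda-\boldb(\emptyset)=\bolda$ and, by component-wise symmetry of binomials, ${\bolda\choose\bolda-\boldb}=\prod_{i=1}^m{a_i\choose a_i-b_i}=\prod_{i=1}^m{a_i\choose b_i}={\bolda\choose\boldb}$; moreover $\epsilon_\emptyset=\widetilde\chi\bigl(\link{\emptyset}{\SC}\bigr)-(-1)^{d-1}=\widetilde\chi(\SC)-(-1)^{d-1}$ and $|\bolda|=d$. Hence Theorem~\ref{thm_DehnSommerville_balancedSimplicialComplexes} reduces to $h_\boldb-h_{\bolda-\boldb}=(-1)^{d-|\boldb|}\bigl(\widetilde\chi(\SC)-(-1)^{d-1}\bigr){\bolda\choose\boldb}$, and negating gives $h_{\bolda-\boldb}-h_\boldb=(-1)^{d-|\boldb|+1}\epsilon_\emptyset{\bolda\choose\boldb}$.

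It then remains to match the sign $(-1)^{d-|\boldb|+1}$ with the exponent $(-1)^{|\boldb|}$ asserted in~\eqref{eq_eq_DehnSommerville_balancedSemiEulerian_hversion}; this is the one point requiring a moment's care. Writing $(-1)^{d-|\boldb|+1}=(-1)^{d+1}(-1)^{|\boldb|}$, it suffices to check that $(-1)^{d+1}\epsilon_\emptyset=\epsilon_\emptyset$: if $d$ is odd this is immediate, and if $d$ is even then $\SC$ has odd dimension $d-1$, so by the fact recalled earlier (every odd-dimensional semi-Eulerian complex is Eulerian) we get $\widetilde\chi(\SC)=(-1)^{d-1}$, i.e.\ $\epsilon_\emptyset=0$, and the identity holds trivially. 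Combining, $h_{\bolda-\boldb}-h_\boldb=(-1)^{|\boldb|}\bigl(\widetilde\chi(\SC)-(-1)^{d-1}\bigr){\bolda\choose\boldb}$, which is~\eqref{eq_eq_DehnSommerville_balancedSemiEulerian_hversion}.

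Finally, for the two particular cases: if $\SC$ is Eulerian then $m_\emptyset=1$, hence $\epsilon_\emptyset=\widetilde\chi(\SC)-(-1)^{d-1}=0$ and $h_{\bolda-\boldb}=h_\boldb$; and if $\SC$ is completely balanced then $\bolda=(1,\dots,1)$ forces each $b_i\in\{0,1\}$, so ${\bolda\choose\boldb}=\prod_{i=1}^m{1\choose b_i}=1$. Since the substantial content is already in Theorem~\ref{thm_DehnSommerville_balancedSimplicialComplexes}, everything here is routine; the only genuine subtlety is the parity bookkeeping in the sign, which is why it is worth isolating explicitly.
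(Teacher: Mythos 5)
Your proof is correct and follows essentially the paper's route: specialize Theorem~\ref{thm_DehnSommerville_balancedSimplicialComplexes}, note that $\epsilon_F=0$ for every non-empty face so only the empty-face term survives, with ${\bolda\choose\bolda-\boldb}={\bolda\choose\boldb}$ and $\epsilon_\emptyset=\widetilde\chi(\SC)-(-1)^{d-1}$. The only divergence is the last sign step: the paper simply substitutes $\boldb\mapsto\bolda-\boldb$ in the resulting identity $h_{\boldb}-h_{\bolda-\boldb}=(-1)^{|\bolda|-|\boldb|}{\bolda\choose\bolda-\boldb}\,\epsilon_\emptyset$ (legitimate since it holds for all $\boldb\leq\bolda$), which immediately yields the sign $(-1)^{|\boldb|}$, whereas you negate and then resolve the apparent discrepancy $(-1)^{d+1}$ by a parity case analysis invoking the earlier fact that odd-dimensional semi-Eulerian complexes are Eulerian; this is valid and non-circular (that fact was derived from the $f$-version relations for reciprocal complexes), just a bit more roundabout than the paper's one-line re-indexing.
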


\begin{proof}
As in the proof of Corollary~\ref{cor_eq_DehnSommerville_semiEulerian_hversion}, the result follows from~\eqref{eq_DehnSommerville_balancedSimplicialComplexes} and the fact that $\epsilon_F=0$ for every non-empty face $F\in \SC$ and $\epsilon_\emptyset=\widetilde \chi(\SC) - (-1)^{d-1}$. We get
\begin{align*}
h_{\boldb}-h_{\bolda-\boldb} = (-1)^{|\bolda|-|\boldb|}  {\bolda \choose \bolda-\boldb} (\widetilde \chi(\SC) - (-1)^{d-1}),
\end{align*}
In order to obtain~\eqref{eq_eq_DehnSommerville_balancedSemiEulerian_hversion} we evaluate the previous equation at $\boldb=\bolda-\boldb$.  
The remaining two items are straight forward from~\eqref{eq_eq_DehnSommerville_balancedSemiEulerian_hversion}.
\end{proof}

Theorem~\ref{thm_DehnSommerville_balancedSimplicialComplexes} is a generalization of a recent result of Sawaske and Xue~\cite[Theorem~4.1]{sawaske_nonEulerianDehnSommerville_2021}. Their result holds for completely balanced simplicial complexes and is stated without using the multi-binomial coefficient\footnote{In~\cite{sawaske_nonEulerianDehnSommerville_2021}, the result is stated for balanced simplicial complexes. However, the definition used by the authors is that of completely balanced simplicial complexes.}. 

\begin{corollary}[{\cite{sawaske_nonEulerianDehnSommerville_2021}}]
\label{cor_DehnSommerville_completelyBalancedSimplicialComplexes}
Let $\SC$ be a balanced simplicial complex of type~$\bolda=(1,1,\dots,1)$ (\ie a completely balanced complex). 
For $\boldb \leq \bolda$, the following relations hold:
\begin{align}\label{eq_DehnSommerville_completelyBalancedSimplicialComplexes}
h_{\boldb}-h_{\bolda-\boldb} = (-1)^{|\bolda|-|\boldb|} \sum_{F\in \SC_\boldb} \epsilon_F.
\end{align}
\end{corollary}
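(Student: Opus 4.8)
The plan is to derive Corollary~\ref{cor_DehnSommerville_completelyBalancedSimplicialComplexes} as a direct specialization of Theorem~\ref{thm_DehnSommerville_balancedSimplicialComplexes}, which is already available. The only work is to substitute $\bolda = (1,1,\dots,1)$ into Equation~\eqref{eq_DehnSommerville_balancedSimplicialComplexes} and simplify the multi-binomial coefficient.

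First I would recall that for a completely balanced complex of type $\bolda = (1,1,\dots,1)$ with $m = d = |\bolda|$, every entry of every relevant vector lies in $\{0,1\}$: indeed $\boldb(F) \leq \bolda$ forces each coordinate of $\boldb(F)$ to be $0$ or $1$, and likewise for $\boldb$. Consequently $\bolda - \boldb(F)$ and $\bolda - \boldb$ are also $\{0,1\}$-vectors. Next I would examine the multi-binomial coefficient ${\bolda - \boldb(F) \choose \bolda - \boldb} = \prod_{i=1}^m {(\bolda-\boldb(F))_i \choose (\bolda-\boldb)_i}$ appearing in~\eqref{eq_DehnSommerville_balancedSimplicialComplexes}. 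Since $F$ ranges over $\SC_\boldb$, we have $\boldb(F) \leq \boldb$, hence $\bolda - \boldb(F) \geq \bolda - \boldb$ coordinatewise, so for every index $i$ the binomial coefficient ${(\bolda-\boldb(F))_i \choose (\bolda-\boldb)_i}$ is one of ${0 \choose 0}$, ${1 \choose 0}$, or ${1 \choose 1}$, each of which equals $1$. Therefore the entire product is $1$ for every $F \in \SC_\boldb$.

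Finally I would substitute this back: Equation~\eqref{eq_DehnSommerville_balancedSimplicialComplexes} becomes
\begin{align*}
h_{\boldb} - h_{\bolda - \boldb} = (-1)^{|\bolda| - |\boldb|} \sum_{F \in \SC_\boldb} \epsilon_F,
\end{align*}
which is exactly~\eqref{eq_DehnSommerville_completelyBalancedSimplicialComplexes}. This completes the proof.

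There is essentially no obstacle here; the statement is a corollary in the strict sense. The only point requiring a moment's care is confirming that the condition $F \in \SC_\boldb$ (equivalently $\boldb(F) \leq \boldb$) is precisely what guarantees the multi-binomial coefficient does not vanish and in fact equals $1$ in the completely balanced case — for $F \notin \SC_\boldb$ some coordinate would demand ${0 \choose 1} = 0$, which is why the sum in Theorem~\ref{thm_DehnSommerville_balancedSimplicialComplexes} is already restricted to $\SC_\boldb$ and nothing further is lost.
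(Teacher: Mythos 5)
Your argument is correct and coincides with the paper's own proof: both specialize Theorem~\ref{thm_DehnSommerville_balancedSimplicialComplexes} to $\bolda=(1,\dots,1)$ and observe that for $F\in\SC_\boldb$ the multi-binomial coefficient ${\bolda-\boldb(F) \choose \bolda-\boldb}$ equals $1$ because both vectors have $\{0,1\}$-entries with $\bolda-\boldb\leq\bolda-\boldb(F)$. Nothing is missing; your coordinatewise check is just a slightly more explicit phrasing of the paper's one-line observation.
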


\begin{proof}
Note that if $\boldc\in \N^m$ consists of only zeros and ones and $\boldc'\leq \boldc$ then ${\boldc \choose \boldc'}=1$. 
Now, let $\bolda=(1,1,\dots ,1)$ (completely balanced condition). For every $F\in\SC_\boldb$ we have $\boldb(F)\leq \boldb$, and so taking $\boldc=\bolda-\boldb(F)$ and $\boldc'=\bolda-\boldb$ we have ${\boldc \choose \boldc'}=1$.
The result then follows directly from Theorem~\ref{thm_DehnSommerville_balancedSimplicialComplexes}.
\end{proof}

\section{Examples of the $f=h$ reciprocity}\label{sec_examples}
All instances of the Dehn--Sommerville relations presented in this paper are consequences of the $f{=}h$ reciprocity described in Theorem~\ref{thm_fh_reciprocity} (and its multivariate version in Theorem~\ref{thm_fh_reciprocity_multi}). 
Figure~\ref{fig_examples_fh_reciprocity} illustrates four examples of this result:

\begin{figure}
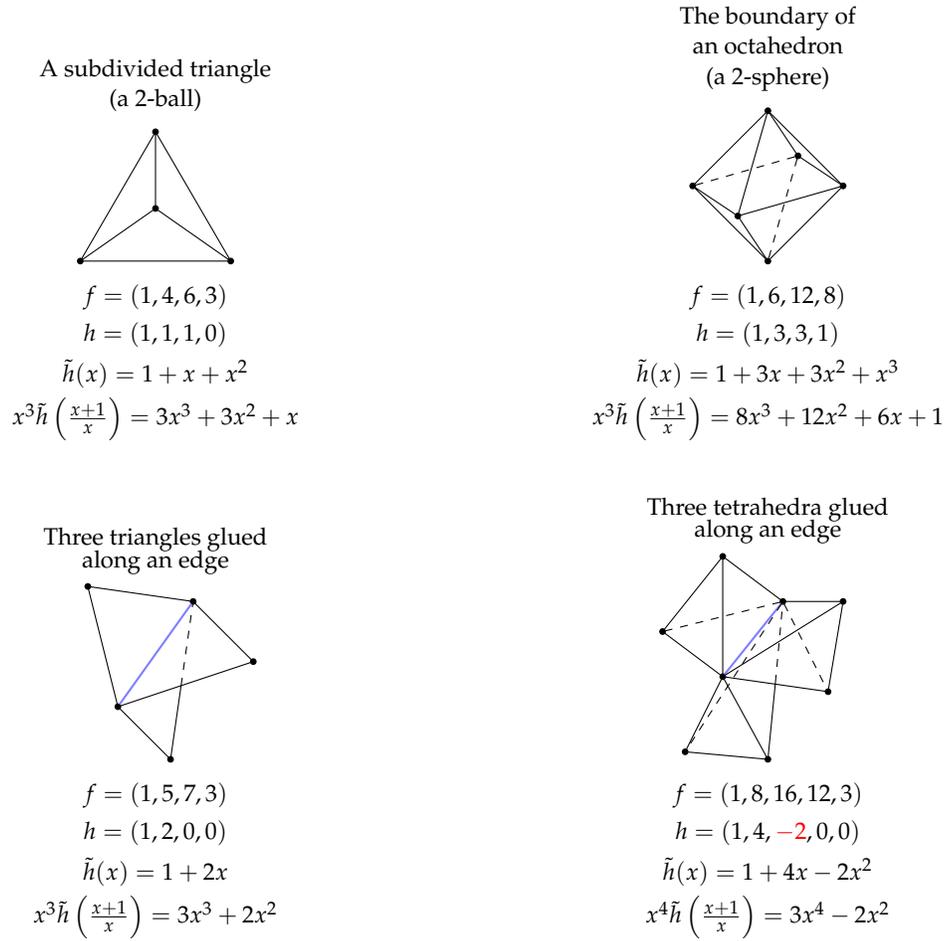

	\begin{subfigure}[b]{.45\textwidth}
		\centering
		\includegraphics[scale=1,page=12]{3d_associahedron.pdf}
	\end{subfigure}
	\hspace*{.5cm}
	\begin{subfigure}[b]{.45\textwidth}
		\centering
		\includegraphics[scale=1,page=13]{3d_associahedron.pdf}
	\end{subfigure}
	
	\vspace*{.5cm}
	
	\begin{subfigure}[t]{.45\textwidth}
		\centering
		\includegraphics[scale=1,page=14]{3d_associahedron.pdf}
	\end{subfigure}
	\hspace*{.5cm}
	\begin{subfigure}[t]{.45\textwidth}
		\centering
		\includegraphics[scale=1,page=15]{3d_associahedron.pdf}
	\end{subfigure}
\caption{Four examples of the $f{=}h$ reciprocity.}
\label{fig_examples_fh_reciprocity}
\end{figure}

(1) The first example is the simplicial complex of a triangle which is subdivided in three smaller triangles. 
This is a 2-dimensional ball, and the evaluation 
$ x^d \tilde h\left(\frac{x+1}{x}\right) = 3x^3+3x^2+1x$ enumerates its interior faces (those with multiplicity~$m_F=1$): 
3 triangles, 3 interior edges, and 1 interior vertex.
   
(2) The second example is the boundary complex of an octahedron. 
This is a 2-dimensional sphere and therefore also an Eulerian complex. The evaluation    
$ x^d \tilde h\left(\frac{x+1}{x}\right) = 8x^3+12x^2+6x+1$ enumerates all its faces (all of them are interior and have multiplicity $m_F=1$):
8 triangles, 12 edges, 6 vertices, and 1 empty face. 

(3) The third example is the complex of three triangles glued along an edge.
The evaluation 
$ x^d \tilde h\left(\frac{x+1}{x}\right) = 3x^3+2x^2$ 
enumerates the faces $F$ with multiplicity~$m_F$:
3~triangles with multiplicity 1, and 1 edge with multiplicity 2. 
This last multiplicity comes from the common edge of the three triangles, whose link consists of three disconnected vertices and has reduced Euler characteristic 2. All other faces have multiplicity zero.

(4) The fourth example is the complex of three tetrahedra glued along an edge. 
The evaluation 
$ x^d \tilde h\left(\frac{x+1}{x}\right) = 3x^4-2x^2$ 
enumerates the faces $F$ with multiplicity~$m_F$:
3 tetrahedra with multiplicity 1, and 1 edge with multiplicity $-2$. 
This last multiplicity comes from the common edge of the three tetrahedra, whose link consists of three disconnected segments and has reduced Euler characteristic~2. The multiplicity of this edge is then $(-1)^{3-2} \times 2=-2$. This is an interesting case because we have a negative multiplicity. All other faces have multiplicity zero.

\appendix
\section{Macdonald's polynomial Dehn--Sommerville relation}\label{appendix_Macdonald}
In his 1971 paper~\cite{Macdonald_DehnSommerville_1971}, Macdonald proved a polynomial Dehn--Sommerville relation for a finite simplicial complex $\SC$ whose underlying topological space is a $(d-1)$-dimensional manifold. As far as we know, this is the first appearance of a Dehn--Sommerville type relation for manifolds (possibly with boundary), after Klee's Dehn--Sommerville relations for semi-Eulerian complexes from 1964~\cite{klee_DehnSommervilleRelations_1964}.  

In this appendix, we recall Macdonald's polynomial relation, and show that it follows from the polynomial Dehn--Sommerville relation~\eqref{eq_f_fint_one} in Theorem~\ref{thm_polynomialDehnSommerville_homology}. We also explain why Macdonald's relation is slightly weaker.  

Throughout this appendix, we fix a simplicial complex $\SC$ whose underlying topological space is a $(d-1)$-dimensional manifold, and denote by $f_i$, $f_i^\interior$, and~$f_i^\partial$ the number of $i$-dimensional faces, interior faces, and boundary faces, respectively. 
Macdonald considered the polynomial 
\begin{align}
\widetilde P(\SC,x) \defs 1-f_0x+f_1x^2-\dots + (-1)^df_{d-1}x^d,
\end{align}
and proved the following result.\footnote{The term $-\widetilde \chi (\SC)$ on the right hand side of~\eqref{eq_Macdonald_one} differs from $\widetilde \chi (\SC)$ in~\cite[Theorem~2.1]{Macdonald_DehnSommerville_1971} in a minus sign. This is not a mistake, because in~\cite[Page 182]{Macdonald_DehnSommerville_1971} the value of $\widetilde \chi (\SC)$ is defined as the ``augmented Euler characteristic'' which is equal to $-\widetilde \chi (\SC)$ as defined here.} 

\begin{theorem}[{\cite[Theorem~2.1]{Macdonald_DehnSommerville_1971}} Macdonald polynomial Dehn--Sommerville relation for manifolds]
The polynomial $Q(x)\defs \widetilde P(\SC,x)-\frac{1}{2} \widetilde P(\partial \SC,x)$ satisfies
\begin{align}\label{eq_Macdonald_one}
Q(1-x)+(-1)^{d-1}Q(x)=
\begin{cases}
0, & \text{ if } d-1 \text{ is odd},\\
-\widetilde \chi (\SC), & \text{ if } d-1 \text{ is even.}
\end{cases}
\end{align}
\end{theorem}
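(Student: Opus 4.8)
The plan is to derive Macdonald's relation directly from the polynomial Dehn--Sommerville relation~\eqref{eq_f_fint_one} in Theorem~\ref{thm_polynomialDehnSommerville_homology}. First I would express Macdonald's auxiliary polynomials in terms of the $\tilde f$-polynomials introduced in the main text. Observe that $\widetilde P(\SC,x) = \tilde f(-x)$, since $\tilde f(x)=\sum_{i=0}^d f_{i-1}x^i$ and so $\tilde f(-x)=\sum_{i=0}^d (-1)^i f_{i-1}x^i = 1 - f_0 x + f_1 x^2 - \cdots + (-1)^d f_{d-1}x^d$. Likewise $\widetilde P(\partial\SC,x) = 1 + \tilde f^\partial(-x)$, where $\tilde f^\partial(x)\defs\sum_{i=1}^d f_{i-1}^\partial x^i$ (the extra $1$ accounts for the empty face of $\partial\SC$, which is excluded from the $\tilde f^\partial$-polynomial by our convention). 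Since every non-empty face is either interior or a boundary face, $\tilde f(x) = 1 + \tilde f^\interior(x) + \tilde f^\partial(x)$, which lets me eliminate $\tilde f^\partial$ in favour of $\tilde f$ and $\tilde f^\interior$.

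Next I would substitute these expressions into $Q(x) = \widetilde P(\SC,x) - \tfrac12 \widetilde P(\partial\SC,x)$ to obtain $Q(x) = \tilde f(-x) - \tfrac12\bigl(1 + \tilde f^\partial(-x)\bigr) = \tfrac12 + \tilde f(-x) - \tfrac12 \tilde f^\partial(-x) = \tfrac12 + \tfrac12\bigl(\tilde f(-x) + 1 + \tilde f^\interior(-x)\bigr)$, using the relation above at $-x$. So $Q(x) = 1 + \tfrac12\tilde f(-x) + \tfrac12\tilde f^\interior(-x)$. I would then compute $Q(1-x) + (-1)^{d-1}Q(x)$ and invoke Theorem~\ref{thm_polynomialDehnSommerville_homology}, which asserts $(-1)^d\tilde f(x) = \tilde f^\interior(-x-1) + m_\emptyset$, equivalently $\tilde f^\interior(-x) = (-1)^d\tilde f(x-1) - m_\emptyset$ after the shift $x\mapsto x-1$; and also its companion form~\eqref{eq_f_fint_two}, $(-1)^d\tilde f^\interior(x) = \tilde f(-x-1) - (-1)^d m_\emptyset$. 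Plugging in and collecting the $\tilde f(1-x)$, $\tilde f(x)$, $\tilde f^\interior(1-x)$, $\tilde f^\interior(x)$ terms, the polynomial parts should cancel pairwise (this is exactly what the Dehn--Sommerville relation~\eqref{eq_f_fint_one} buys us), leaving only a constant expressed through $m_\emptyset = (-1)^{d-1}\widetilde\chi(\SC)$.

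Finally I would separate the parity cases. When $d-1$ is odd (so $d$ is even, $(-1)^{d-1}=-1$), the constants should cancel too, giving $0$; when $d-1$ is even ($d$ odd, $(-1)^{d-1}=1$), the leftover constant should simplify to $-\widetilde\chi(\SC)$ after substituting $m_\emptyset = (-1)^{d-1}\widetilde\chi(\SC) = \widetilde\chi(\SC)$ and the two $\tfrac12 + \tfrac12$ contributions from the two copies of $Q$. I would verify the bookkeeping of the $1$'s and $\tfrac12$'s carefully here, since Macdonald's convention for the augmented Euler characteristic (flagged in the footnote as $-\widetilde\chi(\SC)$) is precisely where sign errors creep in.

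The main obstacle I anticipate is purely the constant-term bookkeeping: the halves from the $\tfrac12\widetilde P(\partial\SC,x)$ term, the $+1$'s coming from the empty face appearing in $\tilde f$ but not in $\tilde f^\interior$ or $\tilde f^\partial$, and the $m_\emptyset$ terms from the two applications of Theorem~\ref{thm_polynomialDehnSommerville_homology} must all be tracked simultaneously, and the sign of $(-1)^{d-1}$ interacts with all of them. The polynomial cancellation itself is automatic once~\eqref{eq_f_fint_one} is in hand, so no genuine difficulty lies there; it is just a matter of not mis-counting a constant. I would also briefly remark, as promised in Remark~\ref{rem_Macdonald}, that~\eqref{eq_Macdonald_one} packages only the $d$ equations obtained by equating the coefficients of $x^k$ for the combined symmetry $Q(1-x)\pm Q(x)$, which is strictly weaker than the full system~\eqref{eq_gen_DehnSommerville_one}--\eqref{eq_gen_DehnSommerville_two} relating the $f_{i-1}$ to the $f_{i-1}^\interior$ individually.
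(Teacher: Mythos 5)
Your route is the same as the paper's: express $Q$ through $\tilde f$ and $\tilde f^\interior$ (via $\widetilde P(\SC,-x)=\tilde f(x)$ and $f_{i-1}=f_{i-1}^\interior+f_{i-1}^\partial$ for $i\geq 1$), and then let \eqref{eq_f_fint_one} together with its equivalent form \eqref{eq_f_fint_two} do all the work; the paper simply adds these two identities and multiplies by $\tfrac12$. However, the step where you compute $Q$ fails as written. From $Q(x)=\tilde f(-x)-\tfrac12\bigl(1+\tilde f^{\partial}(-x)\bigr)$ the constant is $-\tfrac12$, not $+\tfrac12$, and substituting $\tilde f^{\partial}(-x)=\tilde f(-x)-1-\tilde f^\interior(-x)$ makes all constants cancel: the correct identity is $Q(x)=\tfrac12\bigl(\tilde f(-x)+\tilde f^\interior(-x)\bigr)$, with no additive constant. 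Your formula $Q(x)=1+\tfrac12\tilde f(-x)+\tfrac12\tilde f^\interior(-x)$ is off by $1$, and this is not harmless bookkeeping: carrying it through $Q(1-x)+(-1)^{d-1}Q(x)$ adds the extra term $1+(-1)^{d-1}$, which vanishes when $d-1$ is odd but produces $2-\widetilde\chi(\SC)$ instead of the required $-\widetilde\chi(\SC)$ when $d-1$ is even, so the theorem would not be recovered in that case. With the corrected $Q$, your plan closes exactly as you describe and coincides with the paper's argument: evaluating \eqref{eq_f_fint_one} and \eqref{eq_f_fint_two} at $-x$ cancels the polynomial parts and leaves $\tfrac12\bigl((-1)^d-1\bigr)m_\emptyset$, which is $0$ for $d$ even and equals $-m_\emptyset=-\widetilde\chi(\SC)$ for $d$ odd (since then $m_\emptyset=(-1)^{d-1}\widetilde\chi(\SC)=\widetilde\chi(\SC)$), giving \eqref{eq_Macdonald_one}. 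One further point worth making explicit: the decomposition $\tilde f(x)=1+\tilde f^\interior(x)+\tilde f^{\partial}(x)$ uses that for a triangulated manifold every non-empty face is either interior or a boundary face in the sense of the multiplicities $m_F$, i.e.\ Proposition~\ref{prop_hom_manifold_m_F}, which is what lets Theorem~\ref{thm_polynomialDehnSommerville_homology} be applied here.
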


In order to compare this with the polynomial relation~\eqref{eq_f_fint_one} in Theorem~\ref{thm_polynomialDehnSommerville_homology}, it is convenient to replace $x$ by $-x$ and rewrite~\eqref{eq_Macdonald_one} as:

\begin{align}\label{eq_Macdonald_two}
(-1)^dQ(-x) = Q(1+x) + \text{cte},
\end{align}
where the constant term is
\begin{align}
\text{cte} = 
\begin{cases}
0, & \text{ if } d-1 \text{ is odd},\\
\widetilde \chi (\SC), & \text{ if } d-1 \text{ is even.}
\end{cases}
\end{align}

Now note that 
\begin{align}
\widetilde P(\SC,-x) &= 1+f_0x+f_1x^2+\dots + (-1)^df_{d-1}x^d  = \tilde f(x), \\
\widetilde P(\partial \SC,-x) &=  1+f_0^\partial x+f_1^\partial x^2+\dots + (-1)^df_{d-1}^\partial x^d.
\end{align}

Therefore, we have $\widetilde P(\SC,-x)-\widetilde P(\partial \SC,-x)=\tilde f^\interior(x)$, and so
\begin{align}
Q(-x) &= \frac{1}{2} \left( \tilde f(x) + \tilde f^\interior(x) \right), \\
Q(1+x) &=  \frac{1}{2} \left( \tilde f(-x-1) + \tilde f^\interior(-x-1) \right).
\end{align}

Now recall the polynomial Dehn--Sommerville relation~\eqref{eq_f_fint_one} in Theorem~\ref{thm_polynomialDehnSommerville_homology}:
\begin{align*}
(-1)^d \tilde f(x) = \tilde f^\interior (-x-1) + m_\emptyset.
\end{align*}
Here, the multiplicity is $m_\emptyset=(-1)^{d-1}\widetilde \chi (\SC)$. Replacing $x$ by $-x-1$ in the previous equation gives an alternative version of the polynomial Dehn--Sommerville relation, which was already stated in Equation~\eqref{eq_f_fint_two}:  
\begin{align*}
  (-1)^d \tilde f^\interior (x)  = \tilde f(-x-1) - (-1)^d m_\emptyset.
\end{align*}

By adding Equations~\eqref{eq_f_fint_one} and~\eqref{eq_f_fint_two}, and multiplying the result by $\frac{1}{2}$, we get exactly Macdonald's polynomial relation~\eqref{eq_Macdonald_two}. We have just proved:

\begin{lemma}
The polynomial Dehn--Sommerville relation~\eqref{eq_f_fint_one} implies Macdonald's polynomial relation~\eqref{eq_Macdonald_two}.
\end{lemma}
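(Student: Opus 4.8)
The plan is to verify the chain of equalities that the authors have already laid out in the appendix, namely that adding Equations~\eqref{eq_f_fint_one} and~\eqref{eq_f_fint_two} and halving reproduces Macdonald's relation~\eqref{eq_Macdonald_two}. First I would record the two expressions
\begin{align*}
Q(-x) &= \tfrac12\bigl(\tilde f(x)+\tilde f^\interior(x)\bigr), &
Q(1+x) &= \tfrac12\bigl(\tilde f(-x-1)+\tilde f^\interior(-x-1)\bigr),
\end{align*}
which follow from $\widetilde P(\SC,-x)=\tilde f(x)$, from $\widetilde P(\SC,-x)-\widetilde P(\partial\SC,-x)=\tilde f^\interior(x)$, and from the definition $Q=\widetilde P(\SC,\cdot)-\tfrac12\widetilde P(\partial\SC,\cdot)$.

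Next I would take the two forms of the polynomial Dehn--Sommerville relation for homology manifolds,
\begin{align*}
(-1)^d\tilde f(x) &= \tilde f^\interior(-x-1)+m_\emptyset, &
(-1)^d\tilde f^\interior(x) &= \tilde f(-x-1)-(-1)^d m_\emptyset,
\end{align*}
where the second is obtained from the first by the substitution $x\mapsto -x-1$ together with multiplication by $(-1)^d$ (here $\SC$ is a manifold, hence a homology manifold, so Theorem~\ref{thm_polynomialDehnSommerville_homology} applies). Adding these two identities and multiplying by $\tfrac12$ gives
\begin{align*}
(-1)^d\,\tfrac12\bigl(\tilde f(x)+\tilde f^\interior(x)\bigr)
= \tfrac12\bigl(\tilde f(-x-1)+\tilde f^\interior(-x-1)\bigr) + \tfrac12\bigl(1-(-1)^d\bigr)m_\emptyset,
\end{align*}
which in the notation above reads $(-1)^d Q(-x)=Q(1+x)+\tfrac12\bigl(1-(-1)^d\bigr)m_\emptyset$, i.e.\ exactly~\eqref{eq_Macdonald_two} provided the constant term matches.

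The only genuine point to check — and the one I expect to be the main (if modest) obstacle — is that $\tfrac12\bigl(1-(-1)^d\bigr)m_\emptyset$ equals the claimed $\mathrm{cte}$. Using $m_\emptyset=(-1)^{d-1}\widetilde\chi(\SC)$: when $d-1$ is odd, $d$ is even, so $1-(-1)^d=0$ and the constant is $0$; when $d-1$ is even, $d$ is odd, so $1-(-1)^d=2$ and the constant is $(-1)^{d-1}\widetilde\chi(\SC)=\widetilde\chi(\SC)$ since $d-1$ is even. Both cases agree with $\mathrm{cte}$, completing the proof. I would close by remarking, as the authors do elsewhere, that the implication is strict: Macdonald's relation is a single polynomial identity obtained by symmetrizing, whereas~\eqref{eq_f_fint_one} and~\eqref{eq_f_fint_two} separately carry strictly more information.
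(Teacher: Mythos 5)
Your proposal is correct and follows essentially the same route as the paper: express $Q(-x)$ and $Q(1+x)$ in terms of $\tilde f$ and $\tilde f^\interior$, derive~\eqref{eq_f_fint_two} from~\eqref{eq_f_fint_one} by the substitution $x\mapsto -x-1$, add the two relations and halve. Your explicit case check that $\tfrac12\bigl(1-(-1)^d\bigr)m_\emptyset$ equals the constant $\mathrm{cte}$ is a welcome detail that the paper leaves implicit.
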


On the other hand, since~\eqref{eq_Macdonald_two} is (half of) the sum of~\eqref{eq_f_fint_one} and~\eqref{eq_f_fint_two} (which is equivalent to~\eqref{eq_f_fint_one}), Equation~\eqref{eq_f_fint_one} can not be deduced from~\eqref{eq_Macdonald_two}.

\begin{lemma}
Macdonald's polynomial relation~\eqref{eq_Macdonald_two} does not imply the polynomial Dehn--Sommerville relation~\eqref{eq_f_fint_one}.
\end{lemma}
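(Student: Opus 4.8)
The plan is to show that the Macdonald relation \eqref{eq_Macdonald_two} is strictly weaker than \eqref{eq_f_fint_one} by exhibiting the precise algebraic obstruction: \eqref{eq_Macdonald_two} is, up to the factor $\tfrac12$, the \emph{sum} of \eqref{eq_f_fint_one} with its ``conjugate'' \eqref{eq_f_fint_two}, and since \eqref{eq_f_fint_two} is itself equivalent to \eqref{eq_f_fint_one} (obtained by the involutive substitution $x\mapsto -x-1$), the map ``take the symmetric part'' collapses a two-dimensional span of polynomial identities onto a one-dimensional one. Concretely, I would first record that \eqref{eq_f_fint_one} asserts
\[
(-1)^d\tilde f(x) - \tilde f^\interior(-x-1) = m_\emptyset,
\]
i.e.\ a single polynomial $R(x) \defs (-1)^d\tilde f(x) - \tilde f^\interior(-x-1) - m_\emptyset$ vanishes identically, whereas \eqref{eq_Macdonald_two}, after the rewriting already carried out in the appendix, asserts only that the symmetrized combination $S(x) \defs R(x) + (-1)^d R(-x-1)$ vanishes (here one uses that the substitution $x\mapsto -x-1$ is an involution, so applying it twice returns $R$, and that it sends \eqref{eq_f_fint_one} to \eqref{eq_f_fint_two}). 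Thus Macdonald's relation is the statement $S\equiv 0$, while the Dehn--Sommerville relation is the statement $R\equiv 0$; the latter trivially implies the former but not conversely.

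The key step is to produce an \emph{explicit} $R$ with $S\equiv 0$ but $R\not\equiv 0$, i.e.\ to show the implication genuinely fails. For this it suffices to find a polynomial $R$ of degree $\le d$ that is anti-invariant under $x\mapsto -x-1$ when $d$ is even (and invariant when $d$ is odd, with appropriate sign) — for instance $R(x) = (2x+1)$ in the relevant parity, or more abstractly any nonzero element of the $(-1)^{d+1}$-eigenspace of the involution $\sigma\colon p(x)\mapsto (-1)^d p(-x-1)$ acting on polynomials of degree $\le d$. Such eigenspaces are nontrivial (the operator $\sigma$ is an involution on a space of dimension $d+1\ge 1$, so both its $\pm1$-eigenspaces are nonzero as soon as $d\ge 1$), which shows that the linear map $R\mapsto R + \sigma(R) = R+(-1)^dR(-x-1)$ taking a candidate Dehn--Sommerville identity to its Macdonald symmetrization has a nontrivial kernel. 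Hence \eqref{eq_Macdonald_two} cannot force $R\equiv 0$.

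The cleanest way to phrase the final argument is the one already sketched in the paragraph preceding the lemma: \eqref{eq_Macdonald_two} equals $\tfrac12\big(\eqref{eq_f_fint_one}+\eqref{eq_f_fint_two}\big)$, and \eqref{eq_f_fint_two} is equivalent to \eqref{eq_f_fint_one}; therefore \eqref{eq_Macdonald_two} lies in the span of \eqref{eq_f_fint_one}, but a single averaged identity cannot recover the unaveraged one — passing from $R$ to $\tfrac12(R+\sigma R)$ is not injective. I would close by noting that this non-injectivity is exactly the phenomenon flagged in the footnote to Remark~\ref{rem_Macdonald}: not all of the linear relations \eqref{eq_gen_DehnSommerville_one} survive symmetrization, so Macdonald's relation captures only the ``$+$''-symmetric combinations of the Dehn--Sommerville relations.

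The main obstacle I anticipate is purely expository rather than mathematical: one must be careful about what ``imply'' means here — both statements are about fixed polynomials attached to $\SC$, not about families, so strictly speaking ``\eqref{eq_Macdonald_two} does not imply \eqref{eq_f_fint_one}'' should be read as ``the linear-algebraic content of \eqref{eq_Macdonald_two} is a proper consequence of that of \eqref{eq_f_fint_one},'' equivalently ``there is no formal manipulation deriving \eqref{eq_f_fint_one} from \eqref{eq_Macdonald_two} alone.'' Making this precise is exactly the observation that the symmetrization map has nontrivial kernel on degree-$\le d$ polynomials; once that is said, the proof is a one-line remark.
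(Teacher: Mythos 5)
Your structural reading is essentially the one the paper itself records just before this lemma --- Macdonald's relation is (half of) the sum of \eqref{eq_f_fint_one} and \eqref{eq_f_fint_two}, hence constrains only a symmetrized combination of the defect of \eqref{eq_f_fint_one} --- but as a proof of the lemma your argument has two concrete problems. First, a sign error: with $R(x) = (-1)^d\tilde f(x)-\tilde f^\interior(-x-1)-m_\emptyset$ and $\sigma\colon p(x)\mapsto (-1)^d p(-x-1)$, relation \eqref{eq_Macdonald_two} is equivalent to $R-\sigma R\equiv 0$, i.e.\ to $R$ being $\sigma$-\emph{invariant}, not to $R+\sigma R\equiv 0$. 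Indeed, passing from \eqref{eq_f_fint_one} to \eqref{eq_f_fint_two} is the substitution $x\mapsto -x-1$ \emph{followed by} multiplication by $(-1)^{d+1}$, so the defect of \eqref{eq_f_fint_two} as stated is $-\sigma R$, and adding the two equations produces $R-\sigma R$; you can check on the paper's $d=3$ example that the difference of the two admissible defects is $\pm x(x+1)(2x+1)$, which is fixed by $\sigma$, not negated. So the relevant kernel is the $+1$-eigenspace of $\sigma$, and your candidate $2x+1$ sits in the wrong eigenspace when $d$ is even.

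Second, and more seriously, nontriviality of an eigenspace of $\sigma$ on all polynomials of degree at most $d$ does not prove the lemma: ``does not imply'' requires admissible data --- a pair $(\tilde f,\tilde f^\interior)$ with $f_{-1}=1$, with $\tilde f^\interior$ having no constant term, and with $m_\emptyset=(-1)^{d}\tilde f(-1)$ --- that satisfies \eqref{eq_Macdonald_two} but violates \eqref{eq_f_fint_one}. These constraints force every achievable defect to satisfy $R(-1)=0$, a condition your witness $2x+1$ violates in either parity, so it is not realized by any face/interior-face numbers. This is not pedantry: for $d=1$ the fixed space of $\sigma$ is nonzero, yet it contains no nonzero polynomial vanishing at $-1$, and in fact for $d=1$ Macdonald's relation \emph{does} imply \eqref{eq_f_fint_one}; an argument that never inspects achievability would ``prove'' the lemma for $d=1$ too, so it cannot be complete. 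To repair it you must exhibit a nonzero $\sigma$-invariant polynomial vanishing at $x=-1$ (for instance $x(x+1)$ for even $d\geq 2$, or $x(x+1)(2x+1)$ for odd $d\geq 3$) and translate it into a perturbation of the interior face numbers of a concrete example --- which is exactly what the paper does, more directly, by taking the cylinder of Example~\ref{example_cylinder} with $f=(1,4,8,4)$ and observing that the fake boundary vector $(1,5,7,2)$, equivalently $f^\interior=(\cdot,-1,1,2)$, satisfies \eqref{eq_Macdonald_two} but not \eqref{eq_f_fint_one}.
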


\begin{proof}
For a fixed $d$ and fixed reduced Euler characteristic $\widetilde \chi (\SC)$, the polynomial Dehn--Sommerville relation~\eqref{eq_f_fint_one} completely determines the face numbers from the interior face numbers (see~\eqref{eq_gen_DehnSommerville_one}) and vice versa (see~\eqref{eq_gen_DehnSommerville_version_two}).
We claim that this is not true for Macdonald's polynomial relation~\eqref{eq_Macdonald_two}. 
For this, it suffices to look at a small example. 

Fix $d=3$ and $\widetilde \chi (\SC)=-1$. Consider the $f$-vector $f=(1,4,8,4)$ of the triangulated cylinder in Example~\ref{example_cylinder}.
Comparing the coefficients of the polynomial relation~\eqref{eq_f_fint_one} completely determines the interior face numbers $f^\interior=(\cdot,0,4,4)$, as well as the boundary face numbers $f^\partial=(1,4,4,0)$.

On the other hand, $f^\partial=(1,5,7,2)$ is also a solution to Macdonald's polynomial relation~\eqref{eq_Macdonald_two}. Thus, Equation~\eqref{eq_Macdonald_two} can not imply~\eqref{eq_f_fint_one}.
\end{proof}

We highlight that the alternative solution $f^\partial=(1,5,7,2)$ in the previous proof is not realistic, because one can not have more boundary vertices ($f_0^\partial=5$) than vertices itself ($f_0=4$). But the linear relations obtained by comparing the coefficients in Macdonald's relation~\eqref{eq_Macdonald_two} allow such examples, while~\eqref{eq_f_fint_one} does not. 

Building a bit more on this example, for $d=3$ we have
\begin{align*}
\tilde f(x) &= 1+f_0x+f_1x^2+f_2x^3, \\
\tilde f^\interior(x) &= f_0^\interior x+f_1^\interior x^2+f_2^\interior x^3.
\end{align*}
Comparing the coefficient of $x^k$, for $k>0$, in the polynomial relation~\eqref{eq_f_fint_one}, gives the following system of linear relations:
\begin{align}
 f_{0} &=  f_0^\interior -2 f_1^\interior+3 f_2^\interior, \label{eq_example_d_3_one}\\
 f_{1} &=  -f_1^\interior+3f_2^\interior, \label{eq_example_d_3_two}\\
 f_{2} &= f_2^\interior, \label{eq_example_d_3_three}
\end{align}
while for $k=0$, we obtain 
\begin{align}
f_0-f_1+f_2&=f_0^\interior-f_1^\interior+f_2^\interior \label{eq_example_d_3_four}. 
\end{align}
We can do the same game with the polynomial relation~\eqref{eq_f_fint_two}, which is equivalent to~\eqref{eq_f_fint_one}. 
We obtain:
\begin{align}
 f_{0}^\interior &=  f_0 -2 f_1+3 f_2, \label{eq_example_d_3_versiontwo_one}\\
 f_{1}^\interior &=  -f_1+3f_2, \label{eq_example_d_3_versiontwo_two}\\
 f_{2}^\interior &= f_2. \label{eq_example_d_3_versiontwo_three}
\end{align}
Comparing the constant term (coefficient of $x^0$) in~\eqref{eq_f_fint_two} just leads to the irrelevant equation 
\begin{align}
 0=0. \label{eq_example_d_3_versiontwo_four}
\end{align}

In summary, the systems of linear relations~\eqref{eq_example_d_3_one}--\eqref{eq_example_d_3_four} and~\eqref{eq_example_d_3_versiontwo_one}--\eqref{eq_example_d_3_versiontwo_four} follow from the polynomial Dehn--Sommerville relation~\eqref{eq_f_fint_two}. 
Macdonald's polynomial relation~\eqref{eq_Macdonald_two} only implies the following linear relations:

\begin{align}
 f_{0}+ f_{0}^\interior &=  (f_0^\interior -2 f_1^\interior+3 f_2^\interior)+ (f_0 -2 f_1+3 f_2), \label{eq_example_d_3_Macdonald_one}\\
 f_{1}+f_{1}^\interior  &=  (-f_1^\interior+3f_2^\interior)+(-f_1+3f_2), \label{eq_example_d_3_Macdonald_two}\\
 f_{2}+f_{2}^\interior &= (f_2^\interior)+(f_2), \label{eq_example_d_3_Macdonald_three} \\
 f_0-f_1+f_2&=f_0^\interior-f_1^\interior+f_2^\interior \label{eq_example_d_3_Macdonald_four}. 
\end{align}

Equation~\eqref{eq_example_d_3_Macdonald_three} becomes $0=0$ and vanishes, while Equations~\eqref{eq_example_d_3_Macdonald_one} and~\eqref{eq_example_d_3_Macdonald_two} are equivalent. This system gets reduced to only two linear relations:
\begin{align}
 2f_{1}+2f_{1}^\interior  &=  3f_2^\interior+3f_2, \\
 f_0-f_1+f_2&=f_0^\interior-f_1^\interior+f_2^\interior . 
\end{align}

For $f=(1,4,8,4)$ we have several possible solutions for the interior face numbers. In particular, $f^\interior=(\cdot,-1,1,2)$ works (where $f_2= f_2^\interior$ is not satisfied, a necessary relation which is forgotten by Macdonald's relation). This is how we found the solution $f^\partial=(1,5,7,2)$. For bigger values of $d$, more interesting solutions can be found. 
   
\section{Polynomial bases}\label{appendix_polynomials}
From our point of view, the various generalizations of the Dehn--Sommerville relations are not just several linear relations, but the consequence of a single polynomial relation; see~\eqref{eq_f_fint_one} for the $f$-version and~\eqref{eq_polynomialDehnSommerville_hversion} for the $h$-version. For the $h$-version, ~\eqref{eq_polynomialDehnSommerville_hversion} is an equality between two polynomials which are written in different bases. 
The Dehn--Sommerville relations follow by transforming one basis in terms of the other and comparing the coefficients. 
In this appendix, we recall the bases we use.   

Let $\Z[x]$ be the ring of polynomials with integer coefficients in one variable $x$. 
Consider the following sets:
\begin{align}
M &\defs \left\{ x^i : 0\leq i \leq d \right\}, \\
\delta &\defs \left\{ (x+1)^ix^{d-i} : 0\leq i\leq d \right\}. 
\end{align}

The collection $M$ is the typical monomial basis. The collection $\delta$ is the second basis which is essential in this paper.  

\begin{proposition}\label{prop_polynomialBases}
The collections $M$ and $\delta$ are bases for the vector space of polynomials of degree less than or equal to $d$ in $\Z[x]$.
For $0\leq k\leq d$, the transformation from the monomial basis $M$ to the $\delta$-basis is given by:
\begin{align}\label{eq_changeBasis}
x^k = \sum_{i=0}^d (-1)^{d-k-i} {d-k \choose i} (x+1)^ix^{d-i}.
\end{align}
\end{proposition}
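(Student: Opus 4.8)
The plan is to prove both claims in Proposition~\ref{prop_polynomialBases} by a direct identity argument rather than by linear-algebra dimension counting. First I would establish the change-of-basis formula~\eqref{eq_changeBasis}, and then the fact that $\delta$ is a basis will follow as an easy consequence.

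To prove~\eqref{eq_changeBasis}, I would start from the trivial decomposition $1 = (x+1) - x$ and write, for $0 \le k \le d$,
\begin{align*}
x^k = x^k \cdot 1^{d-k} = x^k\bigl((x+1)-x\bigr)^{d-k}.
\end{align*}
Expanding the $(d-k)$-th power by the binomial theorem gives
\begin{align*}
x^k\bigl((x+1)-x\bigr)^{d-k} = \sum_{j=0}^{d-k} {d-k \choose j}(x+1)^j(-x)^{d-k-j} = \sum_{j=0}^{d-k} (-1)^{d-k-j}{d-k \choose j}(x+1)^j x^{d-j},
\end{align*}
where in the last step I used $x^k \cdot x^{d-k-j} = x^{d-j}$. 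Reindexing with $i = j$ (and noting the summand vanishes for $i > d-k$, so extending the sum up to $d$ changes nothing) yields exactly~\eqref{eq_changeBasis}. This shows every monomial $x^k$ with $0 \le k \le d$ lies in the span of $\delta$, hence $\delta$ spans the $(d+1)$-dimensional space of polynomials of degree at most $d$. Since $\delta$ has exactly $d+1$ elements, it is a basis; that $M$ is a basis is standard. I expect the only mild subtlety to be bookkeeping with the range of the summation index and the sign $(-1)^{d-k-j}$ versus $(-1)^{d-k-i}$, which is purely routine.

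One should also remark (for later use in Corollary~\ref{cor_DehnSommerville_simplicialComplexes}, where $|F| \le d$ plays the role of $k$) that the argument uses nothing beyond $0 \le k \le d$, so it applies verbatim to expand $x^{|F|}$ for any face $F \in \SC$. The main obstacle here is essentially nonexistent; the content of the proposition is really just the observation that $x^k = x^k((x+1)-x)^{d-k}$ together with the binomial theorem, and the rest is formal.
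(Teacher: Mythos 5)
Your proposal is correct and follows essentially the same route as the paper: both expand $x^k = x^k\bigl((x+1)-x\bigr)^{d-k}$ via the binomial theorem to obtain~\eqref{eq_changeBasis}, and then conclude that $\delta$ is a basis because it is a spanning set of the right cardinality ($d+1$ elements) for the $(d+1)$-dimensional space. The only cosmetic difference is that you phrase the final step as a dimension count while the paper phrases it as ``same cardinality as the basis $M$ plus spanning,'' which is the same argument.
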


\begin{proof}
The collection $M$ is clearly a basis. Furthermore, $M$ and $\delta$ have the same cardinality. So, in order to show that $\delta$ is a basis it suffices to show that every element in $M$ can be written as a linear combination of elements in $\delta$. In other words, it suffices to show~\eqref{eq_changeBasis}. 
This relation follows from expanding the following expression using the binomial theorem:
\begin{align*}
x^k &= x^k \left( (x+1) - x \right)^{d-k} \\
&= x^k \sum_{i=0}^{d-k} {d-k \choose i} (x+1)^i(-x)^{d-k-i} \\
&= \sum_{i=0}^{d-k} (-1)^{d-k-i}{d-k \choose i} (x+1)^i x^{d-i}.
\end{align*}
This is equal to~\eqref{eq_changeBasis}, noting that ${d-k \choose i} = 0$ whenever $i>d-k$.  
\end{proof}

The generalization of the Dehn--Sommerville relations for balanced simplicial complexes also follows from a single polynomial relation~\eqref{eq_polynomialDehnSommerville_balanced_hversion}. 
In this case, two multivariate polynomials in $\Z[x_1,\dots , x_m]$ (the polynomial ring in $m$ variables) are equal to each other, and two analogous bases to $M$ and $\delta$ are used.

Let $\N\defs \{0,1,2,\dots\}$ be the set of non-negative integers, 
fix a sequence~$\bolda=(a_1,\dots,a_m)\in \mathbb N^m$ and let $\boldb=(b_1,\dots,b_m)\in \N^m$ with $\boldb\leq \bolda$. We denote $|b|\defs b_1+\dots +b_m$.
For simplicity, given a polynomial $p(x)\in \Z[x]$ in one variable $x$, we denote by $\x=(x_1,\dots ,x_m)$ a tuple of $m$ variables and define  the multivariate polynomial $p(\x)^\boldb\in \Z[x_1,\dots ,x_m]$  by
\begin{align}
p(\x)^\boldb \defs \prod_{i=1}^m p(x_i)^{b_i}.
\end{align}
With this notation, the binomial theorem becomes:
\begin{align}
\left( p(\x)+q(\x) \right)^\boldb = \sum_{\boldb'\leq \boldb} {\boldb \choose \boldb'} p(\x)^{\boldb'} q(\x)^{\boldb-\boldb'},  \label{eq_binomial_multi}
\end{align}
where ${\boldb \choose \boldb'} \defs \prod_{i=1}^m {b_i \choose b_i'}$. In particular, ${\boldb \choose \boldb'} = {\boldb \choose \boldb-\boldb'}$.

Now consider the following collections of polynomials of degree less than or equal to $\bolda$ in $\Z[x_1,\dots,x_m]$:

\begin{align}
\mathbf M &\defs \left\{ \x^\boldb : \boldb \leq \bolda \right\}, \\
\bm{\delta} &\defs \left\{ \x^\boldb(\x+\mathbf{1})^{\bolda-\boldb} : \boldb \leq \bolda \right\}. 
\end{align}

\begin{proposition}\label{prop_polynomialBases_multi}
The collections $\mathbf M$ and $\bm \delta$ are bases for the vector space of polynomials of degree less than or equal to $\bolda$ in $\Z[x_1,\dots ,x_m]$. For $\boldb \leq \bolda$, the transformation from the monomial basis $\mathbf M$ to the $\bm \delta$-basis is given by:
\begin{align}\label{eq_changeBasis_multi}
\x^\boldb = \sum_{\boldb \leq \boldb' \leq \bolda} (-1)^{|\boldb'|-|\boldb|} {\bolda - \boldb \choose \bolda-\boldb'}
 \x^{\boldb'}(\x+\mathbf{1})^{\bolda-\boldb'}.
\end{align}
\end{proposition}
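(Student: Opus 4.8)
The plan is to mirror the proof of Proposition~\ref{prop_polynomialBases} verbatim, replacing single exponents by vectors in $\N^m$ and single binomial coefficients by multi-binomial coefficients. First I would observe that $\mathbf{M}$ is obviously a basis for the space of polynomials of degree $\leq \bolda$ (it is literally the monomial basis, indexed by the box $\{\boldb : \boldb \leq \bolda\}$), and that $\mathbf{M}$ and $\bm{\delta}$ have the same cardinality, namely $\prod_{i=1}^m (a_i+1)$. Hence to conclude that $\bm{\delta}$ is a basis it suffices to exhibit each element of $\mathbf{M}$ as a linear combination of elements of $\bm{\delta}$; that is exactly the content of the change-of-basis formula~\eqref{eq_changeBasis_multi}, so proving that formula proves the whole proposition.

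To prove~\eqref{eq_changeBasis_multi} I would write, for $\boldb \leq \bolda$,
\begin{align*}
\x^\boldb &= \x^\boldb \bigl( (\x+\mathbf{1}) - \x \bigr)^{\bolda-\boldb}.
\end{align*}
Since $(\x+\mathbf{1})-\x = \mathbf{1}$ coordinatewise, the right-hand side is indeed $\x^\boldb$. Now expand $\bigl((\x+\mathbf{1})-\x\bigr)^{\bolda-\boldb}$ using the multivariate binomial theorem~\eqref{eq_binomial_multi} (with $p(x_i) = x_i+1$, $q(x_i) = -x_i$, and exponent vector $\bolda-\boldb$): this gives a sum over $\boldc \leq \bolda-\boldb$ of terms ${\bolda-\boldb \choose \boldc}(\x+\mathbf{1})^{\boldc}(-\x)^{\bolda-\boldb-\boldc}$. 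Multiplying back by $\x^\boldb$ turns the power of $\x$ into $\x^{\bolda-\boldc}$, and pulling out the sign $(-1)^{|\bolda-\boldb-\boldc|} = (-1)^{|\bolda|-|\boldb|-|\boldc|}$ one gets
\begin{align*}
\x^\boldb = \sum_{\boldc \leq \bolda-\boldb} (-1)^{|\bolda|-|\boldb|-|\boldc|}{\bolda-\boldb \choose \boldc} (\x+\mathbf{1})^{\boldc}\x^{\bolda-\boldc}.
\end{align*}
Finally I reindex by setting $\boldb' \defs \bolda - \boldc$, so that $\boldc = \bolda - \boldb'$; the condition $\boldc \leq \bolda-\boldb$ becomes $\boldb \leq \boldb' \leq \bolda$, the sign exponent $|\bolda|-|\boldb|-|\boldc| = |\boldb'|-|\boldb|$, and ${\bolda-\boldb \choose \boldc} = {\bolda-\boldb \choose \bolda-\boldb'}$, which matches~\eqref{eq_changeBasis_multi} exactly. (If one prefers, one may use the identity ${\bolda-\boldb \choose \bolda-\boldb'} = {\bolda-\boldb \choose \boldb'-\boldb}$ coming from ${\boldc \choose \boldc'} = {\boldc \choose \boldc-\boldc'}$, but it is not needed.)

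The only genuinely nontrivial point — and it is minor — is justifying that the expansion is a sum over $\boldc \leq \bolda-\boldb$ rather than over all $\boldc \leq \bolda$: this is because the multi-binomial coefficient ${\bolda-\boldb \choose \bolda-\boldb'} = \prod_i {a_i-b_i \choose a_i-b_i'}$ vanishes whenever some $a_i - b_i' > a_i - b_i$, i.e. whenever $\boldb' \not\geq \boldb$; so extending the sum to all $\boldb' \leq \bolda$ in~\eqref{eq_changeBasis_multi} changes nothing, exactly as in the univariate case where ${d-k \choose i} = 0$ for $i > d-k$. I expect no real obstacle here: the whole argument is the univariate proof applied coordinatewise, and the multivariate binomial theorem~\eqref{eq_binomial_multi} has already been recorded in the excerpt precisely for this purpose.
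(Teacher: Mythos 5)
Your proposal is correct and follows essentially the same route as the paper: reduce the basis claim to the change-of-basis formula via the cardinality count, write $\x^\boldb=\x^\boldb\bigl((\x+\mathbf{1})-\x\bigr)^{\bolda-\boldb}$, expand with the multivariate binomial theorem~\eqref{eq_binomial_multi}, and reindex (the paper sets $\boldb'=\boldb+\boldb''$ and invokes ${\bolda-\boldb\choose\boldb''}={\bolda-\boldb\choose\bolda-\boldb'}$, while you substitute $\boldb'=\bolda-\boldc$ directly, which is the same computation). The closing remark about extending the summation range is harmless but unnecessary, since~\eqref{eq_changeBasis_multi} is already stated with the sum restricted to $\boldb\leq\boldb'\leq\bolda$.
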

\begin{proof}
Every polynomial  of degree less than or equal to $\bolda$ in $\Z[x_1,\dots ,x_m]$ can be written as a linear combination of elements in $\mathbf M$, and these elements are linearly independent. Thus, $\mathbf M$ is a basis.
Furthermore, $\mathbf M$ and $\bm \delta$ have the same cardinality. So, in order to show that $\bm \delta$ is a basis it suffices to show that every element in $\mathbf M$ can be written as a linear combination of elements in $\bm \delta$. In other words, it suffices to show~\eqref{eq_changeBasis_multi}. 

From~\eqref{eq_binomial_multi} we get
\begin{align*}
\x^\boldb &= \x^\boldb \left( (\x+\mathbf{1}) - \x\right)^{\bolda - \boldb} \\
& = \x^\boldb \sum_{\boldb'' \leq \bolda - \boldb} {\bolda-\boldb \choose \boldb''} (-\x)^{\boldb''} (\x+\mathbf{1})^{(\bolda - \boldb) - \boldb''}. 
\end{align*}
Taking $\boldb'=\boldb + \boldb''$ and using ${\bolda-\boldb \choose \boldb''} = {\bolda -\boldb \choose \bolda - \boldb'}$ we get
\begin{align*}
\x^\boldb = \sum_{\boldb \leq \boldb' \leq \bolda} (-1)^{|\boldb'|-|\boldb|} {\bolda -\boldb \choose \bolda - \boldb'} \x^{\boldb'}(\x+\mathbf{1})^{\bolda - \boldb'}.
\end{align*}
\end{proof}

\section{Connection to Stanley--Reisner rings}
\label{app_StanleyReisner}
Our proofs of the various generalizations of the Dehn--Sommerville relations are based in the $f=h$-reciprocity result in Theorem~\ref{thm_fh_reciprocity} and its multi-variate generalization in Theorem~\ref{thm_fh_reciprocity_multi}. These two theorems are the essence of the Dehn--Sommerville relations. In this appendix, we connect these results to a combinatorial reciprocity result of Stanley~\cite[Theorem~7.1]{stanley_combinatorics_algebra_book_1996} about the Hilbert series of Stanley--Reisner rings.    

Let $\SC$ be a finite simplicial complex of dimension $d-1$ on the vertex set $[n]=\{1,\dots,n\}$.
Given any field $\fieldk$, the \defn{Stanley--Reisner ring} $\SRring$ of the complex $\SC$ is defined as 
\begin{align*}
\SRring \defs \fieldk[x_1,\dots,x_n]/I_\SC,
\end{align*}
where 
\begin{align*}
I_\SC \defs (x_{i_1}\cdots x_{i_r} : i_1<\dots <i_r,\, \{i_1,\dots,i_r\}\notin \SC)
\end{align*}
is the ideal generated by the non-faces of the complex.
For $\boldlambda \defs (\lambda_1,\dots,\lambda_n)$, we denote by $F(\SRring,\boldlambda)$ the multi-graded \defn{Hilbert series} of $\SRring$. By counting the monomials of $\SRring$ according to their support $F\in \SC$, Stanley~\cite[Chapter II.1]{stanley_combinatorics_algebra_book_1996} provided the following nice expression for this Hilbert series:
\begin{align}
\label{thm_stanley_A}
F(\SRring,\boldlambda) =
\sum_{F\in \SC} \prod_{i\in F} \frac{\lambda_i}{1-\lambda_i}.
\end{align}

 Stanley also showed the following reciprocity result.

\begin{theorem}[{\cite[Theorem~7.1]{stanley_combinatorics_algebra_book_1996}}]
\label{thm_stanley_B}
The evaluation at $\mathbf{1}/\boldlambda$ of the Hilbert series of $\SRring$  satisfies
\begin{align}
(-1)^d F(\SRring,\mathbf{1}/\boldlambda) =
\sum_{F\in \SC} m_F \prod_{i\in F} \frac{\lambda_i}{1-\lambda_i}.
\end{align}
where $m_F=(-1)^{d-1-|F|}\widetilde \chi\bigl(\link{F}{\SC}\bigr)$. 
\end{theorem}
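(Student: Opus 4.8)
The plan is to deduce Theorem~\ref{thm_stanley_B} directly from the multigraded identity that is already established inside the proof of Theorem~\ref{thm_fh_reciprocity} (it is Theorem~\ref{thm_fh_reciprocity_multi} for the trivial ``one colour per vertex'' grading), via the single substitution $x_i\mapsto\lambda_i/(1-\lambda_i)$. Recall from that proof the auxiliary function $\mathbf{\tilde f}(x_1,\dots,x_n)=\sum_{F\in\SC}m_F\,\mathbf{x}_F=\sum_{G\in\SC}(-1)^{d-|G|}\sum_{F\subseteq G}\mathbf{x}_F$, where $\mathbf{x}_F=\prod_{i\in F}x_i$ and $m_F=\sum_{F\subseteq G\in\SC}(-1)^{d-|G|}$; combining this with the elementary $\sum_{F\subseteq G}\mathbf{x}_F=\prod_{i\in G}(1+x_i)$ gives the polynomial identity
\begin{equation}\label{eq_plan_multigraded}
\sum_{F\in\SC} m_F\,\mathbf{x}_F \;=\; \sum_{G\in\SC}(-1)^{d-|G|}\prod_{i\in G}(1+x_i).
\end{equation}

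Since both sides of~\eqref{eq_plan_multigraded} are polynomials in $x_1,\dots,x_n$, I substitute $x_i=\lambda_i/(1-\lambda_i)$ into the field of rational functions in $\boldlambda=(\lambda_1,\dots,\lambda_n)$; then $\mathbf{x}_F=\prod_{i\in F}\frac{\lambda_i}{1-\lambda_i}$ and $1+x_i=\frac{1}{1-\lambda_i}$, so~\eqref{eq_plan_multigraded} becomes
\begin{equation}\label{eq_plan_substituted}
\sum_{F\in\SC} m_F\prod_{i\in F}\frac{\lambda_i}{1-\lambda_i}
\;=\;\sum_{G\in\SC}(-1)^{d-|G|}\prod_{i\in G}\frac{1}{1-\lambda_i}.
\end{equation}
It then remains only to identify the right-hand side of~\eqref{eq_plan_substituted} with $(-1)^dF(\SRring,\mathbf{1}/\boldlambda)$. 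Substituting $\lambda_i\mapsto1/\lambda_i$ into Stanley's closed form~\eqref{thm_stanley_A} and using $\frac{1/\lambda_i}{1-1/\lambda_i}=\frac{1}{\lambda_i-1}=-\frac{1}{1-\lambda_i}$, one gets
\begin{align*}
(-1)^d\,F(\SRring,\mathbf{1}/\boldlambda)
&=(-1)^d\sum_{F\in\SC}\prod_{i\in F}\Bigl(-\frac{1}{1-\lambda_i}\Bigr)
=\sum_{F\in\SC}(-1)^{d+|F|}\prod_{i\in F}\frac{1}{1-\lambda_i}\\
&=\sum_{F\in\SC}(-1)^{d-|F|}\prod_{i\in F}\frac{1}{1-\lambda_i},
\end{align*}
where the last step uses $(-1)^{d+|F|}=(-1)^{d-|F|}$. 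Comparing with~\eqref{eq_plan_substituted} (renaming the index $G$ to $F$) yields $(-1)^dF(\SRring,\mathbf{1}/\boldlambda)=\sum_{F\in\SC}m_F\prod_{i\in F}\frac{\lambda_i}{1-\lambda_i}$, which is Theorem~\ref{thm_stanley_B}; the alternative expression $m_F=(-1)^{d-1-|F|}\widetilde\chi\bigl(\link{F}{\SC}\bigr)$ was already derived in~\eqref{eq_multiplicity}.

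There is no genuinely hard step: the whole argument is a substitution into an identity we have proved, followed by elementary sign bookkeeping, and the only point to watch is that it is the \emph{multigraded} refinement~\eqref{eq_plan_multigraded} (one value $\lambda_i/(1-\lambda_i)$ per vertex) rather than the univariate Theorem~\ref{thm_fh_reciprocity} that is needed. I would close the appendix by recording the converse, which is what makes the equivalence precise: specialising all $\lambda_i$ to $x/(x+1)$ turns $F(\SRring,\boldlambda)$ into $\tilde f(x)$ and $F(\SRring,\mathbf{1}/\boldlambda)$ into $\tilde f(-x-1)$, so Theorem~\ref{thm_stanley_B} collapses to $(-1)^d\tilde f(-x-1)=\sum_{F\in\SC}m_Fx^{|F|}$, and applying Theorem~\ref{thm_fh_tilde} with $x$ replaced by $-x-1$ to the left-hand side recovers exactly Theorem~\ref{thm_fh_reciprocity}.
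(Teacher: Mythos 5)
Your argument is correct, but it does something the paper itself does not: the paper never proves Theorem~\ref{thm_stanley_B} --- it quotes it from Stanley and then, in Proposition~\ref{prop_SRring_fh_reciprocity}, only proves that the \emph{specializations} $\lambda_i=\lambda$ (resp.\ $\lambda_i=\omega_{\kappa(i)}$) of Stanley's identity are equivalent to Theorem~\ref{thm_fh_reciprocity} (resp.\ Theorem~\ref{thm_fh_reciprocity_multi}). You instead give a self-contained derivation of the full multivariate statement: the vertex-wise identity $\sum_{F\in\SC}m_F\,\mathbf{x}_F=\sum_{G\in\SC}(-1)^{d-|G|}\prod_{i\in G}(1+x_i)$ is exactly the auxiliary function $\mathbf{\tilde f}(x_1,\dots,x_n)$ appearing in the proof of Theorem~\ref{thm_fh_reciprocity}, and your substitution $x_i=\lambda_i/(1-\lambda_i)$, combined with the monomial-support expansion \eqref{thm_stanley_A} evaluated at $\mathbf{1}/\boldlambda$, gives Stanley's reciprocity after routine sign bookkeeping; the identification $m_F=(-1)^{d-1-|F|}\widetilde\chi\bigl(\link{F}{\SC}\bigr)$ is \eqref{eq_multiplicity}. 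Your closing converse (all $\lambda_i\mapsto x/(x+1)$ plus Theorem~\ref{thm_fh_tilde}) reproduces the content of Proposition~\ref{prop_SRring_fh_reciprocity}. What your route buys is a logically self-contained appendix in the spirit of the paper's stated philosophy --- Stanley's reciprocity, in the form quoted, needs nothing beyond \eqref{thm_stanley_A} and inclusion--exclusion --- whereas the paper's route treats the ring-theoretic statement as a black box and emphasizes equivalence under specialization; note also that the specialization argument alone recovers only the specialized identity, so your observation that the full Theorem~\ref{thm_stanley_B} requires the vertex-wise (multigraded) refinement is the right one. One small caveat: your parenthetical claim that this refinement ``is Theorem~\ref{thm_fh_reciprocity_multi} for the one-colour-per-vertex grading'' is loose, since such a colouring is not balanced in the paper's sense (facets would have to contain every vertex); this does not affect your proof, because you derive the identity directly from the auxiliary function in the proof of Theorem~\ref{thm_fh_reciprocity}, which uses no balancedness.
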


We highlight that the multiplicity $m_F$ here is the same multiplicity used in Theorem~\ref{thm_fh_reciprocity} and Theorem~\ref{thm_fh_reciprocity_multi}. In fact, we will show now that these two theorems are equivalent to specializations of Theorem~\ref{thm_stanley_B}.   

\begin{proposition}
\label{prop_SRring_fh_reciprocity}
Theorem~\ref{thm_fh_reciprocity} is equivalent to Theorem~\ref{thm_stanley_B} when replacing $\lambda_i$ by $\lambda$ for all $i$.
\end{proposition}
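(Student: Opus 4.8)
The plan is to show that both Theorem~\ref{thm_fh_reciprocity} and the single-variable specialization of Theorem~\ref{thm_stanley_B} are, after a change of variables, exactly the same polynomial identity. The key is to convert the rational expression $\prod_{i\in F}\frac{\lambda}{1-\lambda}$ appearing in Stanley's statement into the monomial $x^{|F|}$ appearing in Theorem~\ref{thm_fh_reciprocity}, via the substitution $\frac{\lambda}{1-\lambda} = x$, i.e. $\lambda = \frac{x}{x+1}$.

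First I would record, from~\eqref{thm_stanley_A} and the single-variable specialization $\lambda_i = \lambda$, that
\[
F(\SRring,\lambda,\dots,\lambda) = \sum_{F\in\SC}\left(\frac{\lambda}{1-\lambda}\right)^{|F|} = \sum_{i=0}^d f_{i-1}\left(\frac{\lambda}{1-\lambda}\right)^i.
\]
Setting $x = \frac{\lambda}{1-\lambda}$ (so $1-\lambda = \frac{1}{x+1}$ and $\lambda = \frac{x}{x+1}$), the right side becomes $\sum_i f_{i-1}x^i = \tilde f(x)$. Thus the specialized Hilbert series equals $\tilde f(x)$. Next I would track what happens to $\mathbf 1/\boldlambda$: when $\lambda_i = \lambda$ for all $i$, the point $\mathbf 1/\boldlambda$ is $(1/\lambda,\dots,1/\lambda)$, and $\frac{1/\lambda}{1-1/\lambda} = \frac{1}{\lambda - 1} = -(x+1)^{-1}\big/\!\big(\text{something}\big)$; more directly, substituting $\mu = 1/\lambda$ one computes $\frac{\mu}{1-\mu} = \frac{1}{1/\mu - 1} = \frac{1}{\lambda-1} = -\frac{1}{1-\lambda} = -(x+1)$. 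Hence $F(\SRring,\mathbf 1/\boldlambda)\big|_{\lambda_i=\lambda} = \sum_{i} f_{i-1}(-(x+1))^i = \sum_i f_{i-1}(-1)^i(x+1)^i$. On the other hand, a direct substitution into~\eqref{relation_fh_vectors} (exactly as in the proof of Theorem~\ref{thm_fh_reciprocity}) gives $x^d\tilde h\!\left(\tfrac{x+1}{x}\right) = \sum_i f_{i-1}(x+1)^i(-1)^{d-i}$, which equals $(-1)^d\sum_i f_{i-1}(-1)^i(x+1)^i = (-1)^d F(\SRring,\mathbf 1/\boldlambda)\big|_{\lambda_i=\lambda}$.

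Putting these together: the left-hand side of Theorem~\ref{thm_stanley_B} specialized at $\lambda_i=\lambda$ is $(-1)^d F(\SRring,\mathbf 1/\boldlambda) = x^d\tilde h\!\left(\tfrac{x+1}{x}\right)$, while its right-hand side specialized is $\sum_{F\in\SC} m_F\left(\tfrac{\lambda}{1-\lambda}\right)^{|F|} = \sum_{F\in\SC} m_F x^{|F|}$; the multiplicities $m_F$ are literally the same in both theorems, as already noted in the text. So the specialized Theorem~\ref{thm_stanley_B} reads $x^d\tilde h\!\left(\tfrac{x+1}{x}\right) = \sum_{F\in\SC} m_F x^{|F|}$, which is precisely~\eqref{eq_fh_reciprocity}. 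Conversely, since the substitution $\lambda = x/(1+x)$ is invertible as a rational change of variables (and the identities hold as rational-function identities, hence can be run in either direction), Theorem~\ref{thm_fh_reciprocity} yields back the $\lambda_i=\lambda$ specialization of Theorem~\ref{thm_stanley_B}. This establishes the claimed equivalence.

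The main obstacle is purely bookkeeping: getting the signs and the direction of the substitution right, in particular handling the evaluation at $\mathbf 1/\boldlambda$ and confirming that $\frac{1/\lambda}{1 - 1/\lambda} = -(x+1)$ so that the factor $(-1)^d$ on Stanley's left-hand side matches the $(-1)^{d-i}$ inside the sum that emerges from substituting $\frac{x+1}{x}$ into the defining relation~\eqref{relation_fh_vectors}. One should also remark that although $F(\SRring,\boldlambda)$ is a priori a rational function (an infinite Hilbert series), the identity we need is the one between its numerator/polynomial avatars, so no convergence issue arises; alternatively, one simply works with the finite sums $\sum_i f_{i-1}(\cdots)^i$ throughout, which is what the proof of Theorem~\ref{thm_fh_reciprocity} already does.
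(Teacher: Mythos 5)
Your proposal is correct and follows essentially the same route as the paper: specialize Stanley's formula at equal variables, make the substitution $x=\lambda/(1-\lambda)$ so that $\prod_{i\in F}\frac{\lambda}{1-\lambda}$ becomes $x^{|F|}$, identify the specialized left-hand side with $x^d\tilde h\bigl(\frac{x+1}{x}\bigr)$ via~\eqref{relation_fh_vectors}, and note the change of variables is invertible. The only cosmetic difference is that you carry the $\tilde f$-coefficient expansion $\sum_i f_{i-1}(-(x+1))^i$ through the computation, whereas the paper first packages the specialized Hilbert series as $\tilde h(\lambda)/(1-\lambda)^d$ before substituting; the content is the same.
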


\begin{proof}
Replacing $\lambda_i$ by $\lambda$ in~\eqref{thm_stanley_A} we get 
\begin{align*}
F(\SRring,\lambda) 
&=\sum_{F\in \SC}  \frac{\lambda^{|F|}}{(1-\lambda)^{|F|}}  \\
&=\sum_{i=0}^{d}  \frac{f_{i-1}\lambda^i}{(1-\lambda)^i} 
= \frac{\sum_{i=0}^{d} f_{i-1}\lambda^i(1-\lambda)^{d-i}}{(1-\lambda)^d} \\
&= \frac{\sum_{i=0}^{d} h_i\lambda^i}{(1-\lambda)^d}   \quad \quad \quad \quad (\text{by}~\eqref{relation_fh_vectors}) \\
&= \frac{\tilde h(\lambda)}{(1-\lambda)^d}. 
\end{align*}

Replacing $\lambda_i$ by $\lambda$ in Theorem~\ref{thm_stanley_B} we get 
\begin{align*}
(-1)^d F(\SRring,1/\lambda) =
\sum_{F\in \SC} m_F  \frac{\lambda^{|F|}}{(1-\lambda)^{|F|}}.
\end{align*}

Combining the previous two equations, with the evaluation $\lambda=\frac{x}{x+1}$ in the second, we get $\frac{\lambda}{1-\lambda}=x$ and
\begin{align*}
\sum_{F\in \SC} m_F x^{|F|} 
&= (-1)^d F(\SRring,(x+1)/x) \\
&= x^d \tilde h\left(\frac{x+1}{x}\right),
\end{align*}
which is the statement of Theorem~\ref{thm_fh_reciprocity}.
Vice versa, taking $x=\frac{\lambda}{1-\lambda}$ we recover the evaluation $\lambda_i=\lambda$ of Theorem~\ref{thm_stanley_B} from Theorem~\ref{thm_fh_reciprocity}.  
\end{proof}

In the following proposition, we consider a balanced simplicial complex~$\SC$ of type $\bolda=(a_1,\dots,a_m)$ with vertex coloring $\kappa:[n]\rightarrow [m]$.
We also use extra variables $\boldomega=(\omega_1,\dots,\omega_m)$ for convenience. 

\begin{proposition}
Theorem~\ref{thm_fh_reciprocity_multi} is equivalent to Theorem~\ref{thm_stanley_B} when replacing $\lambda_i$ by $\omega_{\kappa(i)}$ for all $i$.
\end{proposition}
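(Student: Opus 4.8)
The strategy is to mimic the univariate argument in Proposition~\ref{prop_SRring_fh_reciprocity}, replacing the single variable $\lambda$ by the color-indexed family $\omega_{\kappa(i)}$. First I would substitute $\lambda_i = \omega_{\kappa(i)}$ into Stanley's formula~\eqref{thm_stanley_A} for the Hilbert series. Since each vertex $i$ of color $\kappa(i)$ contributes a factor $\frac{\omega_{\kappa(i)}}{1-\omega_{\kappa(i)}}$, a face $F$ with color-count vector $\boldb(F)=(b_1,\dots,b_m)$ contributes $\prod_{j=1}^m \bigl(\frac{\omega_j}{1-\omega_j}\bigr)^{b_j}$. Summing over $F\in\SC$ and grouping by $\boldb(F)$ gives
\begin{align*}
F(\SRring,\boldomega) = \sum_{F\in\SC} \prod_{j=1}^m \left(\frac{\omega_j}{1-\omega_j}\right)^{b_j(F)}
= \frac{\tilde h(\boldomega)}{(\mathbf 1-\boldomega)^{\bolda}},
\end{align*}
where the last equality follows by multiplying numerator and denominator by $(\mathbf 1-\boldomega)^{\bolda}$ and invoking the definition~\eqref{relation_flag_fh_vectors} of the flag $\tilde h$-polynomial; here $(\mathbf 1-\boldomega)^{\bolda}=\prod_j(1-\omega_j)^{a_j}$. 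The key bookkeeping point, exactly as in the univariate case, is that $\bolda-\boldb(F)$ is precisely the exponent needed to clear denominators, because $\SC$ is pure of type $\bolda$.

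Next I would perform the same substitution in Theorem~\ref{thm_stanley_B}, obtaining
\begin{align*}
(-1)^d F(\SRring,\mathbf 1/\boldomega) = \sum_{F\in\SC} m_F \prod_{j=1}^m \left(\frac{\omega_j}{1-\omega_j}\right)^{b_j(F)},
\end{align*}
and then change variables via $\omega_j = \frac{x_j}{x_j+1}$ for each $j$, so that $\frac{\omega_j}{1-\omega_j}=x_j$ and $\frac{1}{\omega_j}=\frac{x_j+1}{x_j}$. Under this substitution the right-hand side becomes $\sum_{F\in\SC} m_F\,\x^{\boldb(F)}$, while on the left $F(\SRring,\mathbf 1/\boldomega) = \frac{\tilde h((\x+\mathbf 1)/\x)}{(\mathbf 1-\mathbf 1/\boldomega)^{\bolda}}$; computing $(\mathbf 1-\mathbf 1/\boldomega)^{\bolda} = \prod_j\bigl(-\tfrac{1}{x_j}\bigr)^{a_j} = (-1)^{|\bolda|}\x^{-\bolda} = (-1)^d\x^{-\bolda}$ cancels the sign $(-1)^d$ and yields $\x^{\bolda}\,\tilde h\bigl(\frac{\x+\mathbf 1}{\x}\bigr) = \sum_{F\in\SC} m_F\,\x^{\boldb(F)}$, which is exactly~\eqref{eq_fh_reciprocity_multi}. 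For the converse direction I would run the substitution $x_j = \frac{\omega_j}{1-\omega_j}$ backwards to recover the $\lambda_i=\omega_{\kappa(i)}$ specialization of Theorem~\ref{thm_stanley_B} from Theorem~\ref{thm_fh_reciprocity_multi}.

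I do not expect a serious obstacle here; the proof is essentially a transcription of Proposition~\ref{prop_SRring_fh_reciprocity}'s proof with color-refined exponents. The one place calling for a small amount of care is the identity $(\mathbf 1-\mathbf 1/\boldomega)^{\bolda} = (-1)^d\x^{-\bolda}$ and making sure the sign $(-1)^d$ from Theorem~\ref{thm_stanley_B} cancels correctly against it — this is where the hypothesis $d=|\bolda|$ is used — together with verifying that the grouping $\sum_{F\in\SC}(\cdots) = \sum_{\boldb\leq\bolda} f_\boldb(\cdots)$ in the Hilbert-series computation legitimately produces the flag $\tilde h$-polynomial rather than some other generating function. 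Since $\tilde h(\boldomega)$ is \emph{defined} in~\eqref{relation_flag_fh_vectors} by the sum over faces, this last point is immediate. Everything else is routine substitution and the multivariate geometric series $\frac{\omega}{1-\omega}=x \iff \omega=\frac{x}{x+1}$.
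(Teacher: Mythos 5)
Your proposal is correct and follows essentially the same route as the paper's own proof: substitute $\lambda_i=\omega_{\kappa(i)}$ into~\eqref{thm_stanley_A} to identify $F(\SRring,\boldomega)=\tilde h(\boldomega)/(\mathbf{1}-\boldomega)^{\bolda}$, substitute into Theorem~\ref{thm_stanley_B}, and change variables $\boldomega=\x/(\x+\mathbf{1})$ in both directions. Your explicit verification that $(\mathbf{1}-\mathbf{1}/\boldomega)^{\bolda}=(-1)^d\x^{-\bolda}$ (using $d=|\bolda|$) is just the sign bookkeeping the paper leaves implicit, so there is nothing to add.
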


\begin{proof}
The proof is essentially the same as the proof of Proposition~\ref{prop_SRring_fh_reciprocity}.  

Replacing $\lambda_i$ by $\omega_{\kappa(i)}$ in~\eqref{thm_stanley_A} we get
\begin{align*}
F(\SRring,\boldomega) 
&=\sum_{F\in \SC} \prod_{i\in F} \frac{\omega_{\kappa(i)}}{1-\omega_{\kappa(i)}} \\
&= \sum_{F\in \SC} \left( \frac{\boldomega}{\mathbf{1}-\boldomega} \right)^{\boldb(F)}
= \frac{\sum_{F\in \SC} \boldomega^{\boldb(F)}(\mathbf{1}-\boldomega)^{\bolda-\boldb(F)}}{(\mathbf{1}-\boldomega)^\bolda} \\
&= \frac{\tilde h(\boldomega)}{(\mathbf{1}-\boldomega)^\bolda}    \quad \quad \quad \quad (\text{by}~\eqref{relation_flag_fh_vectors})
\end{align*}

Replacing $\lambda_i$ by $\omega_{\kappa(i)}$ in Theorem~\ref{thm_stanley_B} we get
\begin{align*}
(-1)^d F(\SRring,1/\boldomega) =
\sum_{F\in \SC} m_F  \left(\frac{\boldomega}{\mathbf{1}-\boldomega} \right)^{\boldb(F)}.
\end{align*}

Combining the previous two equations, with the evaluation $\boldomega=\frac{\x}{\x+\mathbf{1}}$ in the second, we get $\frac{\boldomega}{\mathbf{1}-\boldomega}=\x$ and
\begin{align*}
\sum_{F\in \SC} m_F \x^{\boldb(F)} 
&= (-1)^d F(\SRring,(\x+\mathbf{1})/\x) \\
&= \x^\bolda \tilde h\left(\frac{\x+\mathbf{1}}{\x}\right),
\end{align*}
which is the statement of Theorem~\ref{thm_fh_reciprocity_multi}.
Viceversa, taking $\x=\frac{\boldomega}{\mathbf{1}-\boldomega}$ we recover the evaluation $\lambda_i=\omega_{\kappa(i)}$ of Theorem~\ref{thm_stanley_B} from Theorem~\ref{thm_fh_reciprocity_multi}.
\end{proof}

\section*{Acknowledgements}
Part of the results in this manuscript were initially included in a preliminary preprint available at~\cite{ceballos_muehle_FHtriangles_DehnS0mmerville_2021}, which was split in two papers afterwards. Our discovery of the Dehn--Sommerville relations for manifolds and reciprocal complexes was motivated from our study of $F$- and $H$-triangles of $\nu$-associahedra. When we first posted the results about the Dehn--Sommerville relations we were not aware that many of them were already known. We are extremely grateful to Isabella Novik for her careful reading, valuable comments, solutions, and historical guidance. We are very grateful to Raman Sanyal for his comments that helped us to build the connection  in Appendix~\ref{app_StanleyReisner}. This appendix explains the relation between Theorems~\ref{thm_fh_reciprocity} and~\ref{thm_fh_reciprocity_multi} and Stanley's reciprocity result Theorem~\ref{thm_stanley_B} about the Hilbert series of the Stanley--Reisner ring.
We are also very grateful to Joseph Doolittle for many useful comments and discussions. 

\bibliography{literature}
\end{document}